\documentclass[11pt]{amsart}
\usepackage{amsmath,amsthm,amssymb,latexsym,epic,bbm,comment}
\usepackage{graphicx,enumerate,stmaryrd, xcolor, tikz-cd}
\usepackage{ytableau}
\usepackage[all,2cell]{xy}
\xyoption{2cell}
\usepackage[active]{srcltx}
\usepackage[parfill]{parskip}
\usepackage[
colorlinks=true,
linkcolor=black, 
anchorcolor=black,
citecolor=black, 
urlcolor=black, 
]{hyperref}
\usepackage[e]{esvect}
\usepackage{mathrsfs}
\usepackage{todonotes}
\usepackage{enumitem}
\usepackage{booktabs}
\usepackage{makecell}
\usepackage{array}

\newcolumntype{L}[1]{>{\raggedright\arraybackslash}p{#1}}

\newtheorem{theorem}{Theorem}[section]

\newtheorem{lemma}[theorem]{Lemma}

\theoremstyle{definition}
\newtheorem{definition}[theorem]{Definition}
\newtheorem{remark}[theorem]{Remark}
\newtheorem{example}[theorem]{Example}
\newtheorem{conv}[theorem]{Convention}

\usepackage{tikz}
\usetikzlibrary{arrows.meta}

\tikzset{
dynkin node/.style={
circle,
draw,
fill=white,
inner sep=0pt,
minimum size=4.2pt
},
dynkin label/.style={
draw=none,
fill=none,
inner sep=0pt,
font=\scriptsize
},
dynkin edge/.style={
line width=0.45pt
},
dynkin double edge/.style={
line width=0.45pt,
double distance=1.1pt
},
dynkin double arrow/.style={
dynkin double edge,
-{Stealth[length=4pt,width=10pt]}
}
}

\newcommand{\dynA}{%
\begin{tikzpicture}[baseline=-0.5ex, x=0.55cm, y=0.55cm]
\node[dynkin node] (s1) at (0,0) {};
\node[dynkin node] (s2) at (1,0) {};
\node[dynkin node] (s3) at (2,0) {};
\node[dynkin label] at (3,0) {$\cdots$};
\node[dynkin node] (snm1) at (4,0) {};
\node[dynkin node] (sn) at (5,0) {};

\draw[dynkin edge] (s1) -- (s2);
\draw[dynkin edge] (s2) -- (s3);
\draw[dynkin edge] (s3) -- (2.65,0);
\draw[dynkin edge] (3.35,0) -- (snm1);
\draw[dynkin edge] (snm1) -- (sn);

\node[dynkin label] at (0,-0.42) {$1$};
\node[dynkin label] at (1,-0.42) {$2$};
\node[dynkin label] at (2,-0.42) {$3$};
\node[dynkin label] at (4,-0.42) {$n-1$};
\node[dynkin label] at (5,-0.42) {$n$};
\end{tikzpicture}%
}

\newcommand{\dynB}{%
\begin{tikzpicture}[baseline=-0.5ex, x=0.55cm, y=0.55cm]
\node[dynkin node] (s1) at (0,0) {};
\node[dynkin node] (s2) at (1,0) {};
\node[dynkin node] (s3) at (2,0) {};
\node[dynkin label] at (3,0) {$\cdots$};
\node[dynkin node] (snm1) at (4,0) {};
\node[dynkin node] (sn) at (5,0) {};

\draw[dynkin double arrow] (s1) -- (s2);
\draw[dynkin edge] (s2) -- (s3);
\draw[dynkin edge] (s3) -- (2.65,0);
\draw[dynkin edge] (3.35,0) -- (snm1);
\draw[dynkin edge] (snm1) -- (sn);

\node[dynkin label] at (0,-0.42) {$1$};
\node[dynkin label] at (1,-0.42) {$2$};
\node[dynkin label] at (2,-0.42) {$3$};
\node[dynkin label] at (4,-0.42) {$n-1$};
\node[dynkin label] at (5,-0.42) {$n$};
\end{tikzpicture}%
}

\newcommand{\dynC}{%
\begin{tikzpicture}[baseline=-0.5ex, x=0.55cm, y=0.55cm]
\node[dynkin node] (s1) at (0,0) {};
\node[dynkin node] (s2) at (1,0) {};
\node[dynkin node] (s3) at (2,0) {};
\node[dynkin label] at (3,0) {$\cdots$};
\node[dynkin node] (snm1) at (4,0) {};
\node[dynkin node] (sn) at (5,0) {};

\draw[dynkin double arrow] (s2) -- (s1);
\draw[dynkin edge] (s2) -- (s3);
\draw[dynkin edge] (s3) -- (2.65,0);
\draw[dynkin edge] (3.35,0) -- (snm1);
\draw[dynkin edge] (snm1) -- (sn);

\node[dynkin label] at (0,-0.42) {$1$};
\node[dynkin label] at (1,-0.42) {$2$};
\node[dynkin label] at (2,-0.42) {$3$};
\node[dynkin label] at (4,-0.42) {$n-1$};
\node[dynkin label] at (5,-0.42) {$n$};
\end{tikzpicture}%
}

\newcommand{\dynD}{%
\begin{tikzpicture}[baseline=-0.5ex, x=0.55cm, y=0.55cm]
\node[dynkin node] (s1) at (1,1) {};
\node[dynkin node] (s2) at (0,0) {};
\node[dynkin node] (s3) at (1,0) {};
\node[dynkin node] (s4) at (2,0) {};
\node[dynkin label] at (3,0) {$\cdots$};
\node[dynkin node] (snm1) at (4,0) {};
\node[dynkin node] (sn) at (5,0) {};

\draw[dynkin edge] (s1) -- (s3);
\draw[dynkin edge] (s2) -- (s3);
\draw[dynkin edge] (s3) -- (s4);
\draw[dynkin edge] (s4) -- (2.65,0);
\draw[dynkin edge] (3.35,0) -- (snm1);
\draw[dynkin edge] (snm1) -- (sn);

\node[dynkin label] at (1,1.38) {$1$};
\node[dynkin label] at (0,-0.42) {$2$};
\node[dynkin label] at (1,-0.42) {$3$};
\node[dynkin label] at (2,-0.42) {$4$};
\node[dynkin label] at (4,-0.42) {$n-1$};
\node[dynkin label] at (5,-0.42) {$n$};
\end{tikzpicture}%
}

\newcommand{\dynGtwo}{%
\begin{tikzpicture}[baseline=-0.5ex, x=0.55cm, y=0.55cm]
\node[dynkin node] (s1) at (0,0) {};
\node[dynkin node] (s2) at (1,0) {};

\draw[dynkin double arrow] (s2) -- (s1);
\draw[dynkin edge] (s2) -- (s1);

\node[dynkin label] at (0,-0.42) {$1$};
\node[dynkin label] at (1,-0.42) {$2$};
\end{tikzpicture}%
}

\newcommand{\dynFfour}{%
\begin{tikzpicture}[baseline=-0.5ex, x=0.55cm, y=0.55cm]
\node[dynkin node] (s1) at (0,0) {};
\node[dynkin node] (s2) at (1,0) {};
\node[dynkin node] (s3) at (2,0) {};
\node[dynkin node] (s4) at (3,0) {};

\draw[dynkin edge] (s1) -- (s2);
\draw[dynkin double arrow] (s2) -- (s3);
\draw[dynkin edge] (s3) -- (s4);

\node[dynkin label] at (0,-0.42) {$1$};
\node[dynkin label] at (1,-0.42) {$2$};
\node[dynkin label] at (2,-0.42) {$3$};
\node[dynkin label] at (3,-0.42) {$4$};
\end{tikzpicture}%
}

\newcommand{\dynEsix}{%
\begin{tikzpicture}[baseline=-0.5ex, x=0.55cm, y=0.55cm]
\node[dynkin node] (s1) at (0,0) {};
\node[dynkin node] (s3) at (1,0) {};
\node[dynkin node] (s4) at (2,0) {};
\node[dynkin node] (s5) at (3,0) {};
\node[dynkin node] (s6) at (4,0) {};
\node[dynkin node] (s2) at (2,-1) {};

\draw[dynkin edge] (s1) -- (s3);
\draw[dynkin edge] (s3) -- (s4);
\draw[dynkin edge] (s4) -- (s5);
\draw[dynkin edge] (s5) -- (s6);
\draw[dynkin edge] (s4) -- (s2);

\node[dynkin label] at (0,-0.42) {$1$};
\node[dynkin label] at (1,-0.42) {$3$};
\node[dynkin label] at (2,0.42) {$4$};
\node[dynkin label] at (3,-0.42) {$5$};
\node[dynkin label] at (4,-0.42) {$6$};
\node[dynkin label] at (2,-1.42) {$2$};
\end{tikzpicture}%
}

\newcommand{\dynEseven}{%
\begin{tikzpicture}[baseline=-0.5ex, x=0.55cm, y=0.55cm]
\node[dynkin node] (s1) at (0,0) {};
\node[dynkin node] (s3) at (1,0) {};
\node[dynkin node] (s4) at (2,0) {};
\node[dynkin node] (s5) at (3,0) {};
\node[dynkin node] (s6) at (4,0) {};
\node[dynkin node] (s7) at (5,0) {};
\node[dynkin node] (s2) at (2,-1) {};

\draw[dynkin edge] (s1) -- (s3);
\draw[dynkin edge] (s3) -- (s4);
\draw[dynkin edge] (s4) -- (s5);
\draw[dynkin edge] (s5) -- (s6);
\draw[dynkin edge] (s6) -- (s7);
\draw[dynkin edge] (s4) -- (s2);

\node[dynkin label] at (0,-0.42) {$1$};
\node[dynkin label] at (1,-0.42) {$3$};
\node[dynkin label] at (2,0.42) {$4$};
\node[dynkin label] at (3,-0.42) {$5$};
\node[dynkin label] at (4,-0.42) {$6$};
\node[dynkin label] at (5,-0.42) {$7$};
\node[dynkin label] at (2,-1.42) {$2$};
\end{tikzpicture}%
}

\newcommand{\dynEeight}{%
\begin{tikzpicture}[baseline=-0.5ex, x=0.55cm, y=0.55cm]
\node[dynkin node] (s1) at (0,0) {};
\node[dynkin node] (s3) at (1,0) {};
\node[dynkin node] (s4) at (2,0) {};
\node[dynkin node] (s5) at (3,0) {};
\node[dynkin node] (s6) at (4,0) {};
\node[dynkin node] (s7) at (5,0) {};
\node[dynkin node] (s8) at (6,0) {};
\node[dynkin node] (s2) at (2,-1) {};

\draw[dynkin edge] (s1) -- (s3);
\draw[dynkin edge] (s3) -- (s4);
\draw[dynkin edge] (s4) -- (s5);
\draw[dynkin edge] (s5) -- (s6);
\draw[dynkin edge] (s6) -- (s7);
\draw[dynkin edge] (s7) -- (s8);
\draw[dynkin edge] (s4) -- (s2);

\node[dynkin label] at (0,-0.42) {$1$};
\node[dynkin label] at (1,-0.42) {$3$};
\node[dynkin label] at (2,0.42) {$4$};
\node[dynkin label] at (3,-0.42) {$5$};
\node[dynkin label] at (4,-0.42) {$6$};
\node[dynkin label] at (5,-0.42) {$7$};
\node[dynkin label] at (6,-0.42) {$8$};
\node[dynkin label] at (2,-1.42) {$2$};
\end{tikzpicture}%
}

\newcommand\leftidx[3]{%
{\vphantom{#2}}#1#2#3%
}
\newcommand{\pb}{\leftidx{^p}{b}{}}

\newcommand{\pJ}{\leftidx{^p}{J}{}}
\newcommand{\ph}{\leftidx{^p}{h}{}}
\newcommand{\pmk}{\leftidx{^p}{m}{}}
\newcommand{\pmu}{\leftidx{^p}{\mu}{}}

\newcommand{\starop}[3]{\leftidx{^{*_{#1,#2}}}{#3}{}}
\newcommand{\Jtop}{\overline{J_1}}

\newcommand{\Hom}{\mathrm{Hom}}

\newcommand{\rA}{\mathrm{A}}
\newcommand{\rB}{\mathrm{B}}
\newcommand{\rC}{\mathrm{C}}
\newcommand{\rD}{\mathrm{D}}

\newcommand{\rH}{\mathrm{H}}

\newcommand{\J}{\mathcal{J}}
\def\H{{\mathcal{H}}}

\newcommand{\BbbK}{\mathbb{K}}

\newcommand{\bbZ}{\mathbb{Z}}

\numberwithin{equation}{section}

\title{The subregular and submaximal p-cells}

\author{Vanessa Miemietz}
\address{V.M.: School of Engineering, Mathematics and Physics, University of East Anglia, Norwich NR4 7TJ, United Kingdom,  \newline \href{https://research-portal.uea.ac.uk/en/persons/vanessa-miemietz/}{https://research-portal.uea.ac.uk/en/persons/vanessa-miemietz/}}
\email{v.miemietz@uea.ac.uk}

\author{Marie Roth} 
\address{M.R.: School of Engineering, Mathematics and Physics, University of East Anglia, Norwich NR4 7TJ, United Kingdom}
\email{m.roth@uea.ac.uk}

\author{Daniel Tubbenhauer}
\address{D.T.: The University of Sydney, School of Mathematics and Statistics F07, Office Carslaw 827, NSW 2006, Australia, \href{http://www.dtubbenhauer.com}{www.dtubbenhauer.com}, https://orcid.org/0000-0001-7265-5047}
\email{daniel.tubbenhauer@sydney.edu.au}

\subjclass[2020]{Primary 20C08, 20C20; Secondary 14M15, 18N10.}

\keywords{Kazhdan--Lusztig cells, p-cells, p-canonical basis,
subregular cell, submaximal cell, Hecke algebras, Hecke categories,
Soergel bimodules, parity sheaves, Schubert varieties,
2-representation theory.}

\begin{document}

\begin{abstract}
	Cells for the canonical and \(p\)-canonical bases organise the representation
	theory and geometry of Hecke categories.
	We determine the relevant \(p\)-canonical basis elements and the resulting
	\(p\)-cell structure for the subregular and submaximal cells in all classical
	types.
\end{abstract}

\maketitle

\tableofcontents

\section{Introduction}

Kazhdan--Lusztig theory is one of the main bridges between the geometry of
Schubert varieties, combinatorics and representation theory. Let \(G\) be a
connected reductive complex algebraic group, let \(B\subset G\) be a Borel
subgroup, and let \(W\) be its Weyl group. The flag variety \(G/B\) is
stratified by Schubert cells
\[
	X_w^\circ= BwB/B,
	\qquad w\in W,
\]
and their closures \(X_w\) are the Schubert varieties. Over a field of
characteristic zero, the intersection cohomology complexes of the Schubert
varieties have characters given by the Kazhdan--Lusztig (KL) basis \(b_w\) of
the Hecke algebra; see \cite{KL1, KL2}.

The situation changes over a field of characteristic \(p>0\). The
decomposition theorem for maps such as Bott--Samelson resolutions can fail
with modular coefficients: torsion in the corresponding integral
intersection cohomology, or equivalently a reduction in rank of the integral intersection
forms after reduction modulo \(p\), can change the indecomposable summands. Parity
sheaves provide a robust replacement: they are designed to retain the useful decomposition-theoretic
features of intersection cohomology in this modular setting. As it turns out, for each \(w\in W\), there is an
indecomposable parity sheaf supported on \(X_w\), and its character is the
\(p\)KL basis element \(\pb_w\); see \cite{JMW} and \cite{JW1}. Thus an inequality
\[
	\pb_w\neq b_w
\]
records a genuine difference between the geometry of \(X_w\) in
characteristic \(p\) and its characteristic zero geometry.

One way to see this difference is through a Bott--Samelson resolution of
\(X_w\). The decomposition of the corresponding Bott--Samelson object is
controlled by integral local intersection forms. If the rank of one of these
forms decreases when reducing modulo \(p\), an additional indecomposable summand
appears, and hence the \(p\)KL basis differs from the KL basis. In this sense,
the coefficients of the \(p\)KL basis detect modular singularities of Schubert
varieties.

Although fairly new, the study of the \(p\)KL basis and its associated
structures has developed rapidly. Williamson's torsion explosion showed that
such modular phenomena occur for primes far larger than previously expected
and ruled out several optimistic forms of long-standing conjectures in modular
representation theory; see \cite{Wi}. At the
same time, the \(p\)KL basis has become an effective tool for modular character
theory. It occurs in character formulas for tilting modules and, through
modular Koszul duality and Smith theory, in character formulae for
representations of reductive algebraic groups; see, for example, \cite{RW1, AMRW, RW2}.

There has also been substantial progress in computation and structure.
Algorithms now make it possible to calculate large portions of the \(p\)KL
basis without performing the entire calculation directly in a combinatorial object called the diagrammatic
Hecke category; see \cite{GJW}.
Nevertheless, explicit uniform formulas remain rare and are largely confined
to particularly structured situations; see, for example, \cite{Ba, BDHN}.
At the opposite extreme, recent asymptotic results show that disagreement
between the KL and \(p\)KL bases is widespread in the classical families; see
\cite{BT}.

Thus there is a substantial gap between knowing that modular phenomena are
widespread and being able to describe their occurrences explicitly.

\subsection{Cells: Why care?}

The multiplication of canonical basis elements defines the ordinary left,
right and two-sided KL cells. Replacing the canonical basis by the \(p\)KL
basis defines the corresponding \(p\)-cells. Geometrically, these relations
measure which indecomposable parity sheaves can occur from others under
convolution. Categorically, they describe the cell structure of the geometric
or diagrammatic Hecke category. Thus \(p\)-cells record how the modular changes
in the \(p\)-canonical basis reorganize the multiplication structure of the
Hecke category.

Cells connect Hecke theory to several neighboring parts of representation
theory and geometry. At the decategorified level, they organize cell modules,
Lusztig families, special representations and primitive ideals; see, e.g., \cite{KL1, Lu1, Jo}.
Geometrically, two-sided cells also give rise to
asymptotic Hecke algebras and monoidal categories defined using truncated
convolution; see, e.g., \cite{Lu2, Be, ERT}.
Finally, cells control tensor ideals for quantum tilting modules, while
antispherical \(p\)-cells are expected to play an analogous role for tilting
modules in positive characteristic and their support varieties; see, e.g., \cite{Os1, AHR1, AHR2}.

Cell structure is also fundamental in \(2\)-representation theory. The
indecomposable objects of the Hecke category are the indecomposable
\(1\)-morphisms of a finitary \(2\)-category, and its left, right and two-sided
cells organize their behavior under composition; see \cite{MM1}.
Every simple \(2\)-representation has a unique maximal two-sided
cell which acts nontrivially, called its apex. The problem of classifying simple $2$-representations
therefore breaks naturally into separate problems, one for each
possible apex; see, for example, \cite{MM2, MMMTZ1}.
Over a field of characteristic zero, this approach led to a proof that the
Hecke categories of finite Coxeter type admit only finitely many equivalence
classes of simple \(2\)-representations, and to their classification
in every finite type except \(H_3\) and \(H_4\); see \cite{MMMTZ2}.

In positive characteristic, the indecomposable objects, their multiplication
and hence their cells may change. Before one can attempt an analogous result,
one must first understand the new two-sided \(p\)-cells which can occur.

In finite type \(A\), the \(p\)-cells are completely understood: they agree
with the ordinary KL cells for every prime and hence admit the usual
description via the Robinson--Schensted correspondence; see \cite{J1}.
The same work shows that in finite types \(B\) and
\(C\), for \(p>2\), ordinary KL cells decompose into \(p\)-cells.
Moreover, the \(p\)-canonical basis in type \(A\) is cellular; see \cite{J2}
Further structural and low-rank results appear in \cite{EJ}.
The behavior of \(p\)-cells under parabolic induction has also been studied;
see \cite{JP}.

Nevertheless, there are few uniform descriptions of \(p\)-cell
structures outside type $A$. The aim of the present paper is to give such descriptions for the
first nontrivial cells at the two ends of the ordinary two-sided cell order.

\subsection{Contribution of this article}

We study the first nontrivial cells at the two ends of the ordinary
two-sided cell order:
\[
	\begin{tikzpicture}[
			baseline=-0.3ex,
			every node/.style={font=\small},
			cell/.style={
					draw,
					rounded corners=5pt,
					fill=yellow!6,
					inner xsep=7pt,
					inner ysep=5pt,
					align=center
				}
		]
		\node[cell] (e) at (0,0) {\(\{\mathrm{id}\}\)};
		\node[cell] (subreg) at (2.8,0) {\(J_1\)\\[-1mm]\scriptsize subregular};
		\node at (5.1,0) {\(\cdots\)};
		\node[cell] (submax) at (7.5,0)
		{\(\Jtop=w_0J_1\)\\[-1mm]\scriptsize submaximal};
		\node[cell] (w0) at (10.7,0) {\(\{w_0\}\)};

		\draw[thick] (e) -- (subreg);
		\draw[thick] (subreg) -- (4.5,0);
		\draw[thick] (5.7,0) -- (submax);
		\draw[thick] (submax) -- (w0);
	\end{tikzpicture}
\]
Here \(J_1\) is the ordinary subregular cell, consisting of the non-identity
elements with a unique reduced expression, and \(w_0\) denotes the longest
element. The submaximal cell \(\Jtop=w_0J_1\) is its translate to the opposite
end of the cell order.

These cells are natural first test cases. The elements in the ordinary subregular cell are well-understood and have an explicit description in terms of paths in the Coxeter graph.
At the same time, they are large enough to display genuinely modular
phenomena.

Our main result is the following.

\textbf{Main result.}
In Theorems \ref{prop:subregular-dec-pcells} and \ref{thm:submax-dec-pcells}, we determine the two-sided \(p\)-cell decompositions of the subregular and
submaximal cells in all classical types \(A,B,C,D\). Except in types
\(B_n\) and \(C_n\) in characteristic two, the ordinary cells remain
single two-sided \(p\)-cells. In characteristic two they split, and the
answers for types \(B_n\) and \(C_n\) are different.

We also determine the subregular \(p\)-cells in all exceptional types and
the submaximal \(p\)-cells in types \(G_2\) and \(F_4\) (Theorem \ref{prop:exceptional-subregular-submaximal-pcells}). The exceptional
primes are \(2\) and \(3\) in type \(G_2\), and \(2\) in type \(F_4\).
In types \(E_6,E_7,E_8\), the subregular cell remains a single two-sided
\(p\)-cell for every prime.

In each case, we further record which $p$-cells contain the longest element of a parabolic subgroup.

%

As far as we know, these are the first uniform descriptions in arbitrary rank
of nontrivial two-sided \(p\)-cells outside type \(A\).
Moreover, we give more than the decomposition into two-sided \(p\)-cells.
We determine the relevant \(p\)KL basis elements in classical types explicitly in Theorems \ref{lem:pbasis_subreg} and \ref{lem:pbasis_submin}, with one isolated
submaximal element in type \(C_n\) where a complete expansion is not needed
for the cell calculation.
The exceptional cases are treated computationally in Section~5.

\subsection{The proof in a nutshell}

Let us briefly explain the proof. The first step is local and geometric. We
compute the intersection forms controlling the decomposition of selected
Bott--Samelson objects in small rank. The only reductions in rank in the classical families occur
at the exceptional edge and in characteristic two. These calculations give
the small rank results from which all remaining formulae are obtained, with the exception of one case in type $C_n$.

The second step is combinatorial. Which canonical basis element can occur as
a lower term of a \(p\)KL basis element is severely constrained by the Bruhat order in the subregular case and the left and right descent sets in the submaximal case. These
restrictions leave very few possibilities, which turns what could be a large
calculation in the Hecke category into a small list of local cases.

The third step propagates the local information through the Coxeter graph.
Inversion exchanges left and right calculations. Star operations move the
rank two relations along the simply laced part of the graph, while induction
from parabolic subgroups transports calculations from lower rank to arbitrary
rank. The proof can be summarized as follows:
\[
	\begin{tikzpicture}[
		baseline=-0.5ex,
		every node/.style={font=\small},
		box/.style={
				draw,
				rounded corners=7pt,
				fill=yellow!6,
				text width=3.1cm,
				minimum height=1.2cm,
				align=center
			},
		arrow/.style={-{Stealth[length=6pt,width=7pt]},thick}
		]
		\node[box] (local) at (0,0)
		{local intersection forms\\at the exceptional edge};
		\node[box] (restrict) at (4.5,0)
		{Bruhat order \\ or descent sets \\restrict possible\\ lower terms};
		\node[box] (global) at (9,0)
		{stars and parabolic induction\\propagate to all ranks};

		\draw[arrow] (local) -- (restrict);
		\draw[arrow] (restrict) -- (global);
	\end{tikzpicture}
\]

Once the necessary \(p\)KL expansions are known, the \(p\)-cell relations are obtained from those in small rank by using star operations and parabolic induction, and counting the intersections
of left and right cells determines the final cell shapes. Thus the
calculation is mostly uniform in rank and ultimately rests on a small amount of
local geometry.
Exceptional types are treated computationally.

\subsection{Structure of this article}

The paper is organized as follows. In Section~2 we recall the
\(p\)KL basis and collect the tools used throughout the paper.
Section~3 treats the subregular cell in classical type, first determining
the relevant \(p\)KL basis elements and then the resulting \(p\)-cells.
Section~4 follows the same program for the submaximal cell.
Finally, Section~5 contains the calculations in exceptional type.

\noindent\textbf{Acknowledgments.}
VM is partially supported by EPSRC grant EP/Z533750/1. MR is supported by EPSRC grant EP/Z533750/1.
DT thanks the computer for checking the exceptional types without asking to
be listed as a fourth author or why this would be useful in any way or form, and acknowledges support from ARC Future
Fellowship FT230100489.

\section{Background}

\subsection{The $p$-canonical basis and $p$-cells}
\subsubsection{Coxeter systems}

Let $(W,S)$ be a \textit{Coxeter system}. In other words, there is a matrix $(m_{s,t})_{s,t \in S}$ with $m_{s,s} =1$ and $m_{s,t} = m_{t,s} \in \{2, 3,\dots\} \cup \{\infty\}$ for $s \neq t$ such that \[W = \langle s \in S \mid (st)^{m_{s,t}} =1 \rangle.\]
Here we will assume that $W$ is finite and $m_{s,t}\in \{1,2,3,4,6\}$ for $s,t \in S$, that is, $(W,S)$ is a crystallographic Coxeter system.

We also fix a Cartan realisation $\mathfrak{h}$  of $(W,S)$ over $\bbZ$ (see \cite[Section 10.1]{AMRW}). In particular, it is a free $\bbZ$-module of finite rank  with an action of $W$ given on the \textit{simple co-roots} $\{\alpha_s^\vee\}_{s\in S}$ by \[t.\alpha_s^\vee = \alpha_s^\vee - \alpha_t(\alpha_s^\vee)\alpha^\vee_t\] for $t\in S$. Here $\alpha_t \in \Hom(\mathfrak{h},\bbZ)$ is a \textit{simple root}.

To $(W,S)$ one associates a \textit{Dynkin diagram} as follows. Its vertices are labelled by $S$. For $s,t\in S$ with $s\neq t$,
the corresponding vertices are joined by $|\alpha_s(\alpha_t^\vee)|$ edges if $|\alpha_s(\alpha_t^\vee)| \geq |\alpha_t(\alpha_s^\vee)|$. If moreover, $|\alpha_s(\alpha_t^\vee)| > 1$, the edge is oriented towards the vertex labelled $s$.

We use the following labelling conventions for Dynkin diagrams in classical types:
\begin{gather*}
	\rA_n:\ \dynA
	\qquad
	\rB_n:\ \dynC
	\\
	\rC_n:\ \dynB
	\qquad
	\rD_n:\ \dynD.
\end{gather*}
The displayed diagrams are schematic: in type \(\rA_n,\rB_n,\rC_n\) the
vertices are \(1,\ldots,n\) from left to right. In types \(\rB_n\) and
\(\rC_n\), the exceptional edge is between \(s_1\) and \(s_2\). In type
\(\rD_n\), the fork is attached at \(s_3\).

For exceptional types, we use the following labelling convention:
\[
	G_2:\ \dynGtwo
	\qquad
	F_4:\ \dynFfour
\]
\[
	E_6:\ \dynEsix
	\qquad
	E_7:\ \dynEseven
	\qquad
	E_8:\ \dynEeight.
\]

\subsubsection{Expressions, decoration, defect}
The elements in $S$ are called \textit{simple reflections} and an \textit{expression} $(s_1,\dots,s_n)$ is a sequence of elements in $S$. The product $s_1\cdots s_n$ gives an element $w \in W$ and we will write $\underline{w} = (s_1,\dots,s_n)$ (or sometimes $\underline{w}=s_1\cdots s_n$). We say that the expression is \textit{reduced} if it has minimal length amongst all expressions for $w$. The length of a reduced expression defines the \textit{length} $l(w)$ of the element $w$.

\begin{conv}[Shortest path]\label{not:shortest_path}
	Let \(W\) be of classical type, with simple reflections
	\(S=\{s_1,\dots,s_n\}\). If \(a,b\) are vertices of the Coxeter graph
	and there is a unique shortest path from \(a\) to \(b\), write
	\[
		[a,b]
	\]
	for the product of the simple reflections along this path, read from
	\(a\) to \(b\). Thus, in type \(A_n\),
	\[
		[a,b]=
		\begin{cases}
			s_a s_{a+1}\cdots s_b, & a\leq b, \\
			s_a s_{a-1}\cdots s_b, & a\geq b.
		\end{cases}
	\]
	The same notation is used in type \(D_n\), on the forked Coxeter graph
	with the fork attached at \(s_3\).
\end{conv}

\begin{conv}[Exceptional truncations]\label{not:truncations}
	Assume \(W\) is of type \(B_n\) or \(C_n\), with exceptional edge between
	\(s_{1}\) and \(s_2\). For \(a,b\geq 2\), write
	\[
		[a,1,b]=s_a s_{a-1}\cdots s_{2}\,s_1\,s_{2}\cdots s_b.
	\]
	For \(\min(a,b)\geq k\geq 1\), define the \(k\)-th truncation of this
	element by
	\[
		[a,k,b]
		:=
		s_a s_{a-1}\cdots s_{k+1}\,s_k\,s_{k+1}\cdots s_b.
	\]
	Thus \([a,1,b]\) is the original word, while larger \(k\) means that the
	word turns around earlier, before reaching the exceptional edge.
\end{conv}

For $\underline{w} = (s_1,\dots,s_n)$ an expression, a \textit{subexpression} of $\underline{w}$ is a sequence $e = (e_i)_{1 \leq i \leq n} \in \{0,1\}^n$. If $\underline{w}$ is a reduced expression for $w$, then we write that $x = s_1^{e_1} \cdots s_n^{e_n} \leq w$. This defines the \textit{Bruhat order} on $W$, see, e.g., \cite[Section 1.2.6]{EMTW} for more details.

To a subexpression $e$ of $\underline{w}$, one associates a \textit{decoration} $d = (d_i)_{1 \leq i \leq n} \in \{U,D\}^n$ by setting \[d_1 = U \text{ and } d_i = \begin{cases}
		U & \text{if } s_1^{e_1}\cdots s_{i-1}^{e_{i-1}}s_i>s_1^{e_1}\cdots s_{i-1}^{e_{i-1}} \\
		D & \text{otherwise}
	\end{cases} \text{ for } 2 \leq i \leq n.\]

The defect of $e$ is given by \[\mathrm{df}(e) := |\{1 \leq i \leq n \mid d_ie_i=U0\}|-|\{1 \leq i \leq n \mid d_ie_i=D0\}|.\]

\begin{example}\label{ex:defect}
	Assume that $W$ is of type $B_6$.

	Let $\underline{w}= s_6s_5s_4s_3s_2s_1s_2s_3s_4s_5s_6 = [6,1,6]$ and $e=11101010101$. It corresponds to the element $s_6s_5s_4s_2s_2s_4s_6=s_5s_6s_5$. Its decoration is \[U1U1U1U0U1U0D1U0D1D0U1\] and $\mathrm{df}(e) =2$.
\end{example}

\subsubsection{Hecke algebra}
To $(W,S)$ we associate the \textit{Hecke algebra} $\rH$, which is free over $\bbZ[v,v^{-1}]$ and is generated  by $\{\delta_w \mid w \in W\}$, subject to relations for $s,t\in S$ \begin{align*}
	\delta_s\delta_s                                     & = (v^{-1}-v)\delta_s +1,                                \\
	\underbrace{\delta_s\delta_t\delta_s\dots}_{m_{s,t}} & = \underbrace{\delta_t\delta_s\delta_t\dots}_{m_{s,t}}.
\end{align*}
For $w \in W$ with reduced expression $(s_1, \dots, s_n)$, we set $\delta_w := \delta_{s_1}\cdots\delta_{s_n}$. The set $\{\delta_w \mid w \in W\}$ is the standard basis.

There is a unique $\bbZ$-linear involution $\bar{(-)}$ on $\rH$ satisfying $\bar{v} = v^{-1}$ and~$\bar{\delta_x} = \delta_{x^-1}^{-1}$.

We fix \[b_s = \delta_s+v.\] Observe that $b_s=\bar{b_s}$. For an expression $\underline{w}=(s_1,\dots,s_n)$, we write~$b_{\underline{w}} = b_{s_1}\cdots b_{s_n}$.

\subsubsection{Hecke category and $p$-canonical basis}
Let $O$ be any complete local ring. Consider $\mathfrak{h}_O = \mathfrak{h} \otimes_\bbZ O$ and assume it satisfies Demazure surjectivity. The main reference is \cite{EW}, more precisely Lemma 6.24, Theorem 6.25, and Corollary 6.26.

Let $\H$ be the diagrammatic Hecke category associated with $\mathfrak{h}_O$.
\begin{theorem}[Properties of $\H$]
	Let $O$ be a complete, local, integral domain. Then \begin{enumerate}
		\item $\H$ is a Krull-Schmidt category.
		\item For all $w \in W$ there exists a unique indecomposable object $^OB_w \in \H$ which is a direct summand of $B_{\underline{w}}$ for any reduced expression $\underline{w}$ of $w$ and which is not isomorphic to a grading shift of any direct summand of any expression $B_{\underline{v}}$ for $v < w$. In particular, the object $^OB_w$ does not depend up to isomorphism on the reduced expression $\underline{w}$ of $w$.
		\item The set $\{^OB_w \mid w \in W\}$ gives a complete set of representatives of the isomorphism classes of indecomposable objects in $\H$ up to grading shift.
		\item There exists a unique isomorphism of $\bbZ[v,v^{-1}]$-algebras	$\mathrm{ch} : [\H] \to \rH$ sending $[^OB_s]$ to $b_s$ for all $s \in S$, where $[\H]$ denotes the split Grothendieck group of $\H$.
	\end{enumerate}
\end{theorem}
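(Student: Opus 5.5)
The plan is to follow the framework of Elias--Williamson \cite{EW}, specifically Lemma 6.24, Theorem 6.25 and Corollary 6.26, and to establish the four properties in order.

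\textbf{Step 1: Krull--Schmidt.} For (1), observe that the morphism spaces between Bott--Samelson objects $B_{\underline{w}}$ are graded free $R$-modules of finite rank over $R=\mathrm{Sym}(\mathfrak{h}_O^*)$. This freeness is the double leaves basis theorem, and it is exactly where Demazure surjectivity is used. Consequently each graded piece of every Hom space is a finitely generated $O$-module. The degree-zero endomorphism rings are therefore finite $O$-algebras. Since $O$ is complete local, such algebras are semiperfect: idempotents lift from the reduction modulo the maximal ideal, and endomorphism rings of indecomposables are local. Taking $\H$ to be the Karoubi envelope of the additive graded closure, every object then decomposes into indecomposables with local endomorphism rings, and Krull--Schmidt follows.

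\textbf{Step 2: construction and uniqueness of $^OB_w$.} For (2), argue by induction on $l(w)$ using the light leaves basis. Fix a reduced expression $\underline{w}$. The degree-zero morphisms $B_{\underline{w}}\to B_{\underline{w}}$ factoring through the ideal $\H_{<w}$ (spanned by maps factoring through $B_{\underline{v}}$ with $v<w$) have quotient isomorphic to $O$, because the only degree-zero double leaf not in the ideal is the identity-type leaf for the subexpression $11\cdots1$. Hence $B_{\underline{w}}$ has exactly one indecomposable summand not lying in $\H_{<w}$; call it $^OB_w$.

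Independence of the reduced expression comes from the braid-move morphisms. These are Elias's $2m_{s,t}$-valent vertices, which are isomorphisms modulo $\H_{<w}$ between the Bott--Samelsons of two reduced expressions. They therefore match up the unique summands outside the ideal.

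\textbf{Step 3: completeness of the list.} For (3), any indecomposable object is a summand of some shift of some $B_{\underline{x}}$. If $\underline{x}$ is non-reduced, $B_{\underline{x}}$ decomposes further into Bott--Samelsons of shorter expressions. Inducting on length shows that every indecomposable is a shift of some $^OB_w$. Distinctness of the $^OB_w$ for different $w$ holds because $^OB_w$ has support exactly $w$ in the standard filtration.

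\textbf{Step 4: the character map.} For (4), define $\mathrm{ch}$ via graded ranks of the Hom spaces to standard objects. Equivalently, use the local intersection form graded ranks, which gives
\[
\mathrm{ch}[B]=\sum_x \overline{\mathrm{grk}\,\Hom(B,\Delta_x)}\,\delta_x
\]
up to normalisation. The double leaves theorem gives $\mathrm{ch}[B_{\underline{w}}]=b_{\underline{w}}$, so $\mathrm{ch}$ is multiplicative and $\mathrm{ch}[B_s]=b_s$. It is an isomorphism because $\mathrm{ch}[^OB_w]\in\delta_w+\sum_{x<w}\bbZ[v,v^{-1}]\delta_x$ is unitriangular. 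Uniqueness is immediate, since the $b_s$ generate $\rH$ and the classes $[B_s]$ generate $[\H]$.

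\textbf{Main obstacle.} The hardest step is the double leaves freeness statement over a general complete local domain $O$, in particular the well-definedness of the Elias--Williamson category for the realisation $\mathfrak{h}_O$. Rather than reprove it, I would invoke \cite[Theorem 6.25]{EW} directly and keep only the idempotent-lifting argument explicit.
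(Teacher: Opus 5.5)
Your outline follows the Elias--Williamson argument that the paper cites without proof (\cite[Lemma 6.24, Theorem 6.25, Corollary 6.26]{EW}). Steps 1 and 2, and the overall structure of Step 4, are fine.

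The genuine problem is the reduction in Step 3. It is not true that, for a non-reduced expression $\underline{x}$, the object $B_{\underline{x}}$ decomposes into Bott--Samelson objects of shorter expressions. Braid moves are only isomorphisms modulo lower terms, not isomorphisms of Bott--Samelson objects. So you cannot rewrite $\underline{x}$ until it contains $ss$ and then apply $B_sB_s\cong B_s(1)\oplus B_s(-1)$. Concretely, take type $A_2$ and $\underline{x}=(s,t,s,t)$. Using $B_sB_tB_s\cong B_{sts}\oplus B_s$ and $B_{sts}B_t\cong B_{sts}(1)\oplus B_{sts}(-1)$, one gets $B_{\underline{x}}\cong B_{sts}(1)\oplus B_{sts}(-1)\oplus B_{st}$. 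However, the only Bott--Samelson objects of length at most $3$ having $B_{sts}$ as a summand are $B_sB_tB_s\cong B_{sts}\oplus B_s$ and $B_tB_sB_t\cong B_{sts}\oplus B_t$. By Krull--Schmidt, $B_{\underline{x}}$ is therefore not a direct sum of shifts of shorter Bott--Samelson objects.

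What is true, and is all your induction needs, is that $B_{\underline{x}}$ is a direct \emph{summand} of such a sum. This comes from the double leaves basis. The map $\mathrm{id}_{B_{\underline{x}}}$ is a combination of double leaves factoring through objects $B_{\underline{y}}$, where $\underline{y}$ is reduced and $y$ is expressed by a subexpression of $\underline{x}$. When $\underline{x}$ is not reduced, this forces $l(y)<l(\underline{x})$. Writing $\mathrm{id}=\sum_i g_if_i$ then exhibits $B_{\underline{x}}$ as a retract of $\bigoplus_i B_{\underline{y}_i}(k_i)$. Krull--Schmidt together with Step 2 closes the induction.

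There are two smaller inaccuracies.
\begin{enumerate}
\item ${}^OB_w$ does not have support ``exactly $w$''. What holds, and what gives distinctness, is that $w$ is the unique maximal element of its support, i.e.\ $\mathrm{ch}[{}^OB_w]\in\delta_w+\sum_{x<w}\bbZ[v,v^{-1}]\delta_x$.
\item The graded ranks of the local intersection forms are not an equivalent way to define $\mathrm{ch}$. They compute the summand multiplicities $\ph_{y,x}$, as in Section~\ref{subsubsec:localintform}. The standard-basis coefficients instead come from the graded ranks of the free $R$-modules $\Hom_{\not<x}(B_{\underline{x}},B)$. Their light-leaves bases have degrees equal to defects, which matches Deodhar's formula for $b_{\underline{w}}$.
\end{enumerate}
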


Note that in particular, we have $\mathrm{ch}(B^O_{\underline{w}}) = b_{\underline{w}}$ for any $w \in W$. We are now ready to define the $p$-canonical basis as in \cite[Definition 3.2]{JW1}.

\begin{definition}[$p$-canonical basis]
	Let $\Bbbk$ be a field of positive characteristic $p >0$. The \textit{$p$-canonical basis} of $\rH$ is $\{\pb_w \mid w \in W\}$ where \[\pb_w := \mathrm{ch}([^\Bbbk B_w]).\]

	If $\BbbK$ is a field of characteristic zero, the map $\mathrm{ch}$ yields the canonical basis $\{b_w \mid w\in W\}$.
\end{definition}

\begin{remark}The $p$-canonical basis depends only on the characteristic of $\Bbbk$ and of the choice of the realisation $\mathfrak{h}$.
\end{remark}

The map $\mathrm{ch}$ yields the various coefficients. We fix the following notation for $x \in W$ with reduced expression $\underline{x}$. We write
\[b_{\underline{x}} = b_x + \sum_{z<x}h_{z,x}b_z, \quad \text{and} \quad b_{\underline{x}} = \pb_x + \sum_{z<x}\leftidx{^p}{h}{_{z,x}}\pb_z\]
and \[\pb_x = b_x + \sum_{y<x}\pmk_{y,x}b_y.\]
In particular, we have
\begin{equation}
	\pb_{x} = b_x + \sum_{z<x}b_z(h_{z,x}-\ph_{z,x}-\sum_{z<y<x}\pmk_{z,y}\ph_{y,x}).
\end{equation}

By \cite[Theorem 4.2]{JW1}, $h_{z,x},{^p}{h}{_{z,x}},\pmk_{z,x} \in \bbZ_{\geq 0}[v,v^{-1}]$.

Let $x,y \in W$. We write
\[b_xb_y = \sum_{z \in W}\mu_{x,y}^zb_z \quad \text{and} \quad \pb_x\pb_y = \sum_{z \in W}\pmu_{x,y}^z\pb_z.\]
For $z \in W$, the coefficient of $b_z$ in $\pb_x\pb_y$ is then given by
\begin{equation}\label{eq:pmuxyz}
	\sum_{v,w}\pmk_{v,x}\pmk_{w,y}\mu_{v,w}^z = \pmu_{x,y}^z + \sum_{w>z}\pmk_{z,w}\pmu_{x,y}^w.
\end{equation}

We say that $x \leq_L^p y$ (resp. ~ $x \leq_R^p y$) if there is $w \in W$ such that $\pmu_{w,x}^y \neq 0$ (resp.~ $\pmu_{x,w}^y \neq 0$). Note that this is the opposite preorder to that given in \cite[Definition 3.1]{J1}. The partial preorder $\leq_J^p$ is
generated by $\leq_L^p$ and $\leq_R^p$, i.e.~$x \leq_J^p y$ if there is $w,w'\in W$ such that $\pb_y$ appears as a summand of $\pb_w\pb_x\pb_{w'}$.
The equivalence classes for each of the partial preorders are called left, right or two-sided $p$-cells.

If $p=0$, we will drop the superscript $0$ and write $\leq_J$ instead of $\leq_J^0$ for instance.

\subsection{Useful tools to compute the $p$-canonical basis}
We collect various useful results to compute the $p$-canonical basis that we will apply later on.

\subsubsection{Local intersection form}\label{subsubsec:localintform} There is a general method to compute the coefficients ${^p}{h}{_{y,x}}$ using only the information at the level of $B_{\underline{x}}$. We give a quick overview, referring the reader to \cite[Section 5]{GJW} or \cite[Section 27.3]{EMTW} for more details.

Let $\underline{x},\underline{y}$ be two reduced expressions in $W$ such that $y \leq x$ (in the Bruhat order). Let $\Hom^\bullet_{< y}(B_{\underline{y}},B_{\underline{x}})$ the set of morphisms in the graded space $\Hom^\bullet(B_{\underline{y}},B_{\underline{x}})$ that factors through some $B_{\underline{w}}$ for $\underline{w}$ a reduced expression of $w < y$.
Set $\Hom^\bullet_{\not< y}(B_{\underline{y}},B_{\underline{x}}) := \Hom^\bullet(B_{\underline{y}},B_{\underline{x}})/\Hom^\bullet_{< y}(B_{\underline{y}},B_{\underline{x}}).$

For any $d \in  \bbZ$, the $d$-th graded piece of the local
intersection form is the restriction\[I^d_{\underline{x},\underline{y}}: \Hom^d_{\not< y}(B_{\underline{y}},B_{\underline{x}}) \times \Hom^{-d}_{\not< y}(B_{\underline{x}},B_{\underline{y}}) \to \bbZ.\]

We then have \[{^p}{h}{_{y,x}} = \sum_{d\in \bbZ} \mathrm{rank}(I^d_{\underline{x},\underline{y}} \otimes_O \Bbbk)v^d.\]

\begin{remark}
	The spaces $\Hom^\bullet_{\not< y}(B_{\underline{y}},B_{\underline{x}})$ and $\Hom^\bullet_{\not< y}(B_{\underline{x}},B_{\underline{y}})$ have basis given in terms of \textit{light leaves}. Let $e$ be a subexpression of $\underline{x}$ such that $\underline{x}^e = y$. A light leaf is a particular morphism from $B_{\underline{x}}$ to $B_{\underline{x}^e}$, which can be given in terms of diagram, using the decoration of $e$. Its degree is given by the defect of $e$. For more information on how to compute light leaves, see \cite[Section 10.4]{EMTW}.
\end{remark}



\subsubsection{Descent sets}
One first tool to control the decomposition of the $p$-canonical basis element $\pb_x$ into $b_y$ is to use the descent sets of $x$.

\begin{conv}[Descent sets]\label{conv:descentset}
	Let $w\in W$. The \textit{left descent set} $L(w)$ of $w$ is defined as follows:
	\[L(w) := \{s \in S \mid l(sw)<l(w)\}.\]
	The \textit{right descent set} $R(w)$ is defined symmetrically as\[R(w) := \{s\in S \mid l(ws)<l(w)\}.\]
\end{conv}
Note that $L(w) = R(w^{-1})$. Moreover, for $x,y \in W$,\begin{align*}
	x \leq_L^p y & \implies R(x) \subseteq R(y)  \\
	x \leq_R^p y & \implies L(x) \subseteq L(y).
\end{align*}

\begin{lemma}[{\cite[Proposition 4.2]{JW1}}]\label{lem:myx_descentset} Let $x,y \in W$. Then if $\pmk_{y,x} \neq 0$ then $y \leq x$, $L(x) \subseteq L(y)$ and $R(x)\subseteq R(y)$.
\end{lemma}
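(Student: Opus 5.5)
We need to prove: if ${}^p m_{y,x}\neq 0$ then $y\le x$, $L(x)\subseteq L(y)$ and $R(x)\subseteq R(y)$.

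The plan has three parts: the Bruhat condition is read off directly from the definition of the expansion, the left descent condition comes from an eigenvector property of $\pb_x$, and the right descent condition follows by symmetry.

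\emph{Bruhat condition.} By the expansion $\pb_x = b_x + \sum_{y<x}\pmk_{y,x}b_y$, any $y$ with $\pmk_{y,x}\neq 0$ other than $x$ itself satisfies $y<x$. For $y=x$ the coefficient is $1$ and the descent conditions are trivial. So only the descent inclusions need an argument.

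\emph{Left descents.} Fix $s\in L(x)$. The key claim is that $b_s\pb_x=(v+v^{-1})\pb_x$. To prove it, choose a reduced expression $\underline{x}$ beginning with $s$, so $\underline{x}=(s,\underline{x}')$. In the Hecke category over $\Bbbk$ we have $B_s B_s\cong B_s(1)\oplus B_s(-1)$. Hence $B_sB_{\underline{x}}\cong B_{\underline{x}}(1)\oplus B_{\underline{x}}(-1)$.

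Next, ${}^\Bbbk B_x$ is a summand of $B_{\underline{x}}=B_s B_{\underline{x}'}$. Since $B_s$ is a Frobenius object, $B_s\otimes -$ is self-adjoint up to shift, and $B_s\otimes B_s\cong B_s(1)\oplus B_s(-1)$ yields idempotent-level statements. From these I would show that any summand $M$ of $B_s\otimes N$ satisfies $B_s\otimes M\cong M(1)\oplus M(-1)$. The standard way is to note that $B_s\otimes N$ is a module over the algebra object $B_s$, whose idempotent splittings are $B_s$-equivariant. Equivalently, $M$ lies in the essential image of $B_s\otimes-$ restricted to summands, and such objects are exactly those with $B_sM\cong M(1)\oplus M(-1)$.

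Taking characters then gives $b_s\pb_x=(v+v^{-1})\pb_x$.

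On the canonical basis, $b_sb_y=(v+v^{-1})b_y$ if $sy<y$. If $sy>y$, then $b_sb_y=b_{sy}+\sum_{z<y,\,sz<z}\mu(z,y)b_z$. Now expand $b_s\pb_x$ in the KL basis and compare it with $(v+v^{-1})\pb_x$. Suppose some $y$ with $\pmk_{y,x}\neq0$ has $s\notin L(y)$. Among all such $y$, take one that is maximal in the Bruhat order.

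The term $\pmk_{y,x}b_s b_y$ contributes $\pmk_{y,x}b_{sy}$, and $sy>y$ has $s\in L(sy)$. Every other contribution to $b_{sy}$ comes from some $y'$ with $\pmk_{y',x}\neq0$ and $s\in L(y')$, via $(v+v^{-1})b_{y'}$, which forces $y'=sy$. It could also come from a $y''\neq y$ with $sy''=sy$, but that forces $y''=y$, or from the $\mu$-terms.

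A cleaner route avoids this bookkeeping. Look at coefficients of the form $b_z$ with $sz>z$ in $b_s\pb_x$: these arise only from $\mu$-terms and can be made to vanish. Instead use positivity: $\pb_x(v+v^{-1})-b_s\pb_x=0$, and every $b_y$ with $sy>y$ contributes to $b_s b_y$ the term $b_{sy}$ with coefficient $1\notin(v+v^{-1})\bbZ_{\ge0}[v^{\pm1}]$ after accounting. Degree considerations on the positive coefficients $\pmk$ then force the contradiction.

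I expect this combinatorial comparison to be the main obstacle. An alternative that sidesteps it is to use the right $\H$-module map $\mathrm{ch}$ together with the fact that the $\Bbbk$-span of $\{b_y : s\in L(y)\}$ equals $b_s\rH$. The eigen-relation places $\pb_x$ in $(v+v^{-1})^{-1}b_s\rH\cap\rH$. This module is spanned over $\bbZ[v^{\pm1}]$ by the $b_y$ with $sy<y$, a standard KL fact, so $\pmk_{y,x}\neq0$ implies $s\in L(y)$. I would use this second argument.

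\emph{Right descents.} The inclusion $R(x)\subseteq R(y)$ follows by symmetry. Apply the antiinvolution $\delta_w\mapsto\delta_{w^{-1}}$, which corresponds to flipping diagrams upside-down in $\H$ and sends $\pb_x\mapsto\pb_{x^{-1}}$, and use $L(w)=R(w^{-1})$.
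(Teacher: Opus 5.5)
The paper gives no proof of this lemma; it imports it from \cite[Proposition 4.2]{JW1}. So the comparison is with a correct argument rather than with the paper's.

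\textbf{What works.} Your outer structure is sound. If you know $b_s\pb_x=(v+v^{-1})\pb_x$, then $b_s\rH\subseteq\bigoplus_{sy<y}\bbZ[v^{\pm1}]b_y$ forces $\pmk_{y,x}=0$ whenever $sy>y$. Note that this is a $\bbZ[v^{\pm1}]$-span, not a $\Bbbk$-span.

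\textbf{The gap.} The eigen-relation itself is never actually proved. Your justification, that idempotent splittings of the free $B_s$-module $B_sN$ are $B_s$-equivariant, is false. An idempotent in $\mathrm{End}_{\H}(B_sN)$ has no reason to commute with the action $m\otimes\mathrm{id}_N$.

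Already for $N=B_s$ there is a counterexample. Let $e=i\circ m$ be a projector built from the multiplication $m$; it is $B_s$-linear for a suitable section $i$. Let $c$ be the cup-after-cap endomorphism. Then $ce=c$, $ec=0$ and $c^2=0$ by the needle relation. Hence $e+c$ is an idempotent with the same image, and it is not $B_s$-linear.

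Your next sentence, that such summands are exactly the objects with $B_sM\cong M(1)\oplus M(-1)$, simply restates what has to be shown. Passing from $X=B_sN$, where $B_sX\cong X(1)\oplus X(-1)$ is clear, to an arbitrary summand $M$ of $X$ is exactly the nontrivial step. Krull--Schmidt alone does not supply it.

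\textbf{How to fix it.} You do not need the eigen-relation at all. The positivity idea you considered and dropped is the right tool; apply it to a Bott--Samelson element instead of to the eigen-equation.

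1. Take a reduced expression $\underline{x}$ beginning with $s$. Then $b_{\underline x}=b_s\,b_{\underline{sx}}\in b_s\rH\subseteq\bigoplus_{sy<y}\bbZ[v^{\pm1}]b_y$.

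2. On the other hand, $b_{\underline x}=\pb_x+\sum_{z<x}\ph_{z,x}\pb_z$. So for $y\neq x$, the coefficient of $b_y$ in $b_{\underline x}$ is $\pmk_{y,x}+\sum_{y\le z<x}\ph_{z,x}\pmk_{y,z}$.

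3. Every term of this sum lies in $\bbZ_{\ge0}[v^{\pm1}]$, by the positivity from \cite[Theorem 4.2]{JW1} recorded in the background section.

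4. If $sy>y$, this coefficient is $0$. Since no cancellation is possible, every term vanishes; in particular $\pmk_{y,x}=0$.

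The eigen-relation then follows as a corollary: $\pb_x$ is now a combination of $b_y$ with $sy<y$, and $b_sb_y=(v+v^{-1})b_y$ for each such $y$.

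\textbf{Right descents.} Flipping diagrams upside down is the contravariant duality that fixes every $B_{\underline w}$, so it does not exchange left and right. What you want is the left--right mirror. It reverses tensor products, induces $\delta_w\mapsto\delta_{w^{-1}}$, and sends $B_x$ to $B_{x^{-1}}$. Alternatively, just invoke Lemma~\ref{lem:myx_inverse}.
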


\begin{remark}\label{rmk:cantbeid}
	Let $x \in W\setminus\{id\}$ and $y \in W$. If $\pmk_{y,x} \neq 0$, then we have $\emptyset \neq L(x) \subseteq L(y)$, whence $y \neq id$.
\end{remark}

\begin{conv}[Longest element]\label{not:longest_elt} We denote by $w_0$ the longest element of~$W$. In type~$B_n$ (or~$C_n$), we have \[w_0 = s_1s_2s_1s_2[3,1,3]\dots [n,1,n].\]
	In type~$B_n$ and~$C_n$, the element~$w_0$ is central (\cite[Remark 13.1.8]{D}). Note that \[[i,1,i][j,1,j]=[j,1,j][i,1,i] \text{ for } 2 \leq i,j \leq n.\]
\end{conv}

\begin{remark}\label{rmk:pbw0}
	Let $x \in W$ such that $\pmk_{x,w_0} \neq 0$. Then $L(x)=R(x)= S$, whence $x = w_0$.
\end{remark}

\begin{lemma}[{\cite[Exercise 2.10]{BB1}}]\label{lem:w0DescentSet}
	Let $w \in W$. Then
	\begin{enumerate}
		\item $L(ww_0) = S \backslash L(w)$ and $L(w_0w) = S \backslash w_0L(w)w_0$
		\item $R(ww_0) = S \backslash w_0R(w)w_0$ and $R(w_0w) = S \backslash R(w)$.
	\end{enumerate}
\end{lemma}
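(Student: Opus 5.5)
The statement concerns an arbitrary $w\in W$ and the longest element $w_0$. I will prove the left-multiplication and right-multiplication cases by the same two ingredients: the characterisation $s\in L(u)$ iff $l(su)<l(u)$, and the fact that $w_0$ is the unique element of maximal length. Concretely, I will use that $l(uw_0)=l(w_0)-l(u)$ and $l(w_0u)=l(w_0)-l(u)$ for all $u\in W$. These follow because $w_0$ sends every positive root to a negative root, so the inversion set of $uw_0$ is the complement of the inversion set of $u$ (up to the bijection induced by $w_0$).

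\textbf{$L(ww_0)$.} Fix $s\in S$. Then
\[
l(s\,ww_0)=l(w_0)-l(sw)\quad\text{and}\quad l(ww_0)=l(w_0)-l(w).
\]
Hence $l(sww_0)<l(ww_0)$ if and only if $l(sw)>l(w)$, that is, if and only if $s\notin L(w)$. This gives $L(ww_0)=S\setminus L(w)$.

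\textbf{$R(w_0w)$.} Symmetrically,
\[
l(w_0ws)=l(w_0)-l(ws),
\]
so $s\in R(w_0w)$ if and only if $s\notin R(w)$. This gives $R(w_0w)=S\setminus R(w)$.

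\textbf{The twisted cases.} I will use the classical fact that conjugation by $w_0$ permutes $S$, since $w_0$ maps the simple roots to minus the simple roots. Then
\[
w_0w=(w_0ww_0)\,w_0 .
\]
Applying the first case to $w_0ww_0$ gives $L(w_0w)=S\setminus L(w_0ww_0)$. Conjugation by $w_0$ preserves length and permutes $S$, so $L(w_0ww_0)=w_0L(w)w_0$. Combining, $L(w_0w)=S\setminus w_0L(w)w_0$. The identity $R(ww_0)=S\setminus w_0R(w)w_0$ follows in the same way, or by applying inversion together with $L(u)=R(u^{-1})$ and $w_0^{-1}=w_0$.

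\textbf{Main obstacle.} The only nontrivial input is the length formula $l(uw_0)=l(w_0)-l(u)$ together with $w_0Sw_0=S$. I will cite these as standard facts, for instance from Bj\"orner--Brenti, where the result itself appears as the cited exercise. They can also be derived from the root-system description of length as the number of positive roots sent to negative roots.
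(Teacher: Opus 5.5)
Your proof is correct: the untwisted identities follow directly from $l(uw_0)=l(w_0)-l(u)=l(w_0u)$, and the twisted ones follow from writing $w_0w=(w_0ww_0)\,w_0$ and using that conjugation by $w_0$ is a length-preserving permutation of $S$ (or by inversion). The paper gives no argument of its own here and only cites the exercise in Bj\"orner--Brenti, and your argument is exactly the standard solution to that exercise.
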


\begin{remark}\label{rmk:myx_zero}
	Let $x,y \in W$. Let $s \in L(x)$. If $b_y$ does not appear in the decomposition of $b_s\pb_{sx}$, then $\pmk_{y,x} =0$. Symmetrically, for  $s \in R(x)$, if $b_y$ does not appear in the decomposition of $\pb_{xs}b_s$, then $\pmk_{y,x} =0$. Indeed,
	\[b_s\pb_{sx} = b_sb_{sx} + \sum_{z<sx}\pmk_{z,sx}b_sb_z = b_x + \sum_{y<sx}\mu_{s,sx}^yb_y + \sum_{y<sx}\pmk_{z,sx}\left(\sum_{y \leq z}\mu_{s,z}^y\right)b_y.\]
	In particular, $\pmu_{s,sx}^x =1$ and if $\pmk_{y,x} \neq 0$, then $b_y$ appears in the decomposition of $b_s\pb_{sx}$.
\end{remark}

\subsubsection{The star operation and inverse}
A second approach is to use various involutions $\iota$ on $W$ such as the star operation or the inverse map and see how the coefficients $\pmk_{y,x}$ and $\pmk_{\iota(y),\iota(x)}$ are related.

\begin{lemma}[{\cite[Proposition 4.2]{JW1}}]\label{lem:myx_inverse} Let $x,y \in W$, then
	\[\pmk_{y,x} = \pmk_{y^{-1},x^{-1}} \quad \text{and} \quad \ph_{y,x} = \ph_{y^{-1},x^{-1}}.\]
\end{lemma}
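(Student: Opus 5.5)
The plan is to compare the two characters through the Kazhdan--Lusztig anti-involution of the Hecke category. On $\rH$ there is a $\bbZ[v,v^{-1}]$-linear anti-involution $\iota$ with $\iota(\delta_w)=\delta_{w^{-1}}$. It is an anti-automorphism because the defining relations are symmetric under reversing words. Since $\iota(b_s)=b_s$, it sends $b_{\underline{w}}$ to $b_{\underline{w}^{\mathrm{rev}}}$, where $\underline{w}^{\mathrm{rev}}$ is the reversed expression, a reduced expression for $w^{-1}$. It also commutes with the bar involution, so $\iota(b_w)=b_{w^{-1}}$ by uniqueness of the KL basis.

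On the categorical side, $\H$ admits a contravariant-on-$2$-morphisms, monoidally-reversing autoequivalence given by horizontal reflection of Soergel diagrams. It sends $B_{\underline{w}}$ to $B_{\underline{w}^{\mathrm{rev}}}$, preserves degrees, and is defined over $\bbZ$, hence over $O$ and $\Bbbk$. Compatibility with the relations of \cite{EW} is a routine check that the defining relations are reflection symmetric.

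The first step is to show that this equivalence maps ${}^\Bbbk B_x$ to ${}^\Bbbk B_{x^{-1}}$. It preserves indecomposability. The image of ${}^\Bbbk B_x$ is a summand of $B_{\underline{x}^{\mathrm{rev}}}$, and it is not a shift of a summand of any $B_{\underline{v}}$ with $v<x^{-1}$, because applying the equivalence again would produce such a summand for $v^{-1}<x$. Inversion preserves the Bruhat order, so the uniqueness statement in the theorem on properties of $\H$ identifies the image with ${}^\Bbbk B_{x^{-1}}$.

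The equivalence also induces an anti-automorphism of $[\H]$. Under $\mathrm{ch}$ this anti-automorphism matches $\iota$, since both fix $b_s$ and reverse products. Therefore $\iota(\pb_x)=\pb_{x^{-1}}$.

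Next we apply $\iota$ to the expansion $\pb_x=b_x+\sum_y \pmk_{y,x}b_y$. This gives $\pb_{x^{-1}}=b_{x^{-1}}+\sum_y \pmk_{y,x}b_{y^{-1}}$, so $\pmk_{y,x}=\pmk_{y^{-1},x^{-1}}$. Applying $\iota$ to $b_{\underline{x}}=\pb_x+\sum_z \ph_{z,x}\pb_z$ gives the same kind of expansion for $b_{\underline{x}^{\mathrm{rev}}}$. Since the coefficients $\ph_{z,x}$ do not depend on the chosen reduced expression, this yields $\ph_{y,x}=\ph_{y^{-1},x^{-1}}$.

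An alternative route to the second identity goes through the local intersection forms of Section~\ref{subsubsec:localintform}. Reflection maps light leaves to light leaves up to lower terms, and it maps the form $I^d_{\underline{x},\underline{y}}$ isomorphically onto $I^d_{\underline{x}^{\mathrm{rev}},\underline{y}^{\mathrm{rev}}}$, so the ranks agree after $\otimes\Bbbk$.

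The main obstacle is justifying that the reflection functor is well defined on $\H$ over an arbitrary $O$. For the generating dots, trivalent vertices and $2m_{s,t}$-valent vertices, together with their relations including the Jones--Wenzl relations, this holds because each relation has a mirror-symmetric form. The polynomial part requires a little care but is symmetric as well. Everything after that follows formally from the uniqueness characterisation of ${}^OB_w$.
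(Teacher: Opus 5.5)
Your argument for the $\pmk$ identity is the standard one behind the cited result; the paper itself gives no proof beyond the reference to \cite[Proposition 4.2]{JW1}. The left--right mirror of diagrams categorifies the anti-involution $\iota$, and the uniqueness characterisation of ${}^\Bbbk B_w$ forces it to send ${}^\Bbbk B_x$ to ${}^\Bbbk B_{x^{-1}}$.

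One correction there: reflection in a vertical axis is \emph{covariant} on morphisms and only reverses the tensor product, not ``contravariant-on-2-morphisms''. Covariance is exactly what your next step needs. A covariant degree-preserving functor commutes with grading shifts, so it induces a $\bbZ[v,v^{-1}]$-linear anti-automorphism of $[\H]$, namely $\iota$. A contravariant degree-preserving functor (e.g.\ rotation by $\pi$) inverts shifts and would induce $\overline{\iota(-)}$. In that case you would also have to invoke self-duality of $\pb_x$.

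The genuine error is in the $\ph$ part. You claim that $\ph_{z,x}$ does not depend on the chosen reduced expression, but this is false for the quantity used here, namely the coefficient of $\pb_z$ in $b_{\underline{x}}$. Take type $A_2$ with simple reflections $s,t$. Then $b_sb_tb_s=b_{sts}+b_s$ but $b_tb_sb_t=b_{sts}+b_t$. Moreover $\pb_{sts}=b_{sts}$ for every $p$ by Lemma~\ref{lem:myx_descentset}, since no $y<sts$ has $\{s,t\}\subseteq L(y)$. Hence $\ph_{s,sts}$ equals $1$ for $\underline{x}=(s,t,s)$ and $0$ for $\underline{x}=(t,s,t)$. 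Because $sts$ and $s$ are involutions, the identity $\ph_{y,x}=\ph_{y^{-1},x^{-1}}$ fails if the two sides are computed with unrelated reduced expressions.

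The statement is true when $\underline{x}^{\mathrm{rev}}$ is used as the expression for $x^{-1}$, and this is exactly what your computation gives: $b_{\underline{x}^{\mathrm{rev}}}=\iota(b_{\underline{x}})=\pb_{x^{-1}}+\sum_z\ph_{z,x}\pb_{z^{-1}}$, and linear independence of the $p$-canonical basis finishes. So delete the independence claim and state the second identity for the pair $(\underline{x},\underline{x}^{\mathrm{rev}})$. Your intersection-form variant already has this pairing built in.
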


\begin{definition}[Star operation]\label{def:star_operation}
	Let $s,t \in S$ such that $sts=tst$. The \textit{left star operation} $*_{s,t}$ \textit{with respect to $s$ and $t$} is defined as follows. Let $w \in W$ such that $|\{s,t\} \cap L(w)|= 1$. Then $\starop{s}{t}{w}$ is the unique element in $\{sw,tw\}$ such that $|\{s,t\} \cap L(\starop{s}{t}{w})|= 1$. Analogously, we define the \textit{right star operation} on the right for the right descent set.
\end{definition}
\begin{remark}
	Let $s,t,w\in W$ as in the previous definition with $s \in L(w)$. In particular, there is $z \in W$ such that $w=sz$ for $z \in W$ with $l(z) = l(w)-1$. Then the following two cases can occur.\begin{enumerate}
		\item $t \not \in L(z)$. In this case, $\starop{s}{t}{w} = tw =tsz$.
		\item $t \in L(z)$. Then there is $z'\in W$ such that $w=stz'$ and $l(z') = l(w)-2$. Note that $s \not \in L(z')$ since $sts=tst$ and $t \not \in L(w)$. In this case, $\starop{s}{t}{w} = tz'$.
	\end{enumerate}
\end{remark}
\begin{lemma}[{\cite[Corollary 4.7]{J1}}]\label{lem:myx_starop}
	Let $s,t \in S$ such that $sts=tst$. For $x,y \in W$ with $|L(x)\cap \{s,t\}| = |L(y)\cap \{s,t\}| = 1$, we have \[\pmk_{y,x} = \pmk_{\starop{s}{t}{y},\starop{s}{t}{x}}.\]
\end{lemma}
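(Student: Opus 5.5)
The plan is to reduce the statement to the rank two parabolic subgroup \(W_{s,t}\) of type \(A_2\), generated by \(s\) and \(t\), and to use that left multiplication by \(b_s\) and \(b_t\) acts on \(p\)-canonical basis elements essentially as in characteristic zero.

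\textbf{Step 1: coset strings.} Write every \(w\in W\) as \(w=uz\) with \(u\in W_{s,t}\) and \(z\) minimal in \(W_{s,t}z\). The condition \(|L(w)\cap\{s,t\}|=1\) means \(u\in\{s,ts,t,st\}\). On each coset, \(\starop{s}{t}{}\) swaps \(s z\leftrightarrow tsz\) and \(tz\leftrightarrow stz\). By symmetry in \(s,t\) it suffices to treat \(x=sz\), \(\starop{s}{t}{x}=tsz\), together with the inverse direction.

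\textbf{Step 2: a multiplication formula.} I would show that
\[
b_t\,\pb_{sz}=\pb_{tsz}+\sum_{y}c_y\,\pb_y \qquad\text{and}\qquad b_s\,\pb_{tsz}=\pb_{sz}+\sum_{y'}c'_{y'}\pb_{y'},
\]
where every \(y\) in the first sum has \(t\in L(y)\), and every \(y'\) in the second has \(s\in L(y')\). Moreover, in each sum any term with exactly one of \(s,t\) in its left descent set is \(\pb_{stz'}\) for the same coset representative type. This is the step I expect to be the main obstacle.

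\begin{itemize}
\item Since \(\pb_{sz}\) is a summand of \(B_s B_{\underline{z}}\) and \(t\notin L(sz)\), the object \(B_tB_{sz}\) contains \(B_{tsz}\) with multiplicity one, as in Remark~\ref{rmk:myx_zero}.
\item Every other summand \(B_y\) of \(B_tB_{sz}\) has \(t\in L(y)\), because it is a summand of an object of the form \(B_t(\cdot)\).
\item The delicate point is to exclude summands \(B_{tz'}\) with \(s\notin L(tz')\) for \(z'\neq z\), other than the expected ones. For this I would use the singular Soergel bimodule / parabolic argument. Over the \(A_2\) parabolic, \(B_tB_sB_t\cong B_{tst}\oplus B_t\) holds in every characteristic. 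Hence \(B_t\) and \(B_s\) act on the \(W_{s,t}\)-isotypic pieces exactly as in characteristic zero.
\end{itemize}

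\textbf{Step 3: comparing coefficients.} Expand both sides of the first formula in the KL basis, using \(\pb_{sz}=b_{sz}+\sum_{y}\pmk_{y,sz}b_y\) and \(\pb_{tsz}=b_{tsz}+\sum_y \pmk_{y,tsz}b_y\). Then extract the coefficient of \(b_{\starop{s}{t}{y}}\) for those \(y\) with \(|L(y)\cap\{s,t\}|=1\). In characteristic zero, the classical Kazhdan--Lusztig star identity gives
\[
b_t b_{y}=b_{\starop{s}{t}{y}}+\bigl(\text{terms } b_w \text{ with } \{s,t\}\subseteq L(w) \text{ or with } w\text{ of the form } sz''\bigr).
\]
Comparing coefficients then yields \(\pmk_{\starop{s}{t}{y},\starop{s}{t}{x}}=\pmk_{y,x}\). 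The extra \(\pb_y\) terms from Step 2 are harmless: their KL expansions only involve \(b_w\) with \(t\in L(w)\), by Lemma~\ref{lem:myx_descentset}. Hence they contribute nothing to coefficients of elements whose left descent set meets \(\{s,t\}\) in \(s\) alone, and the remaining cases are handled by the second formula.

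\textbf{Step 4: bookkeeping.} Finally, I would run the same argument with the roles of \(s\) and \(t\) exchanged, using the second formula of Step 2. This gives the reverse inequality and covers all four string positions. The equality \(\pmk_{y,x}=\pmk_{\starop{s}{t}{y},\starop{s}{t}{x}}\) follows.
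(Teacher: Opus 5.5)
The paper does not prove this lemma itself; it quotes \cite[Corollary 4.7]{J1}. Your outline has the right overall shape: compare KL coefficients modulo the span of the $b_w$ with $\{s,t\}\subseteq L(w)$. But the step that carries all the content is not proved, and Step~3 as written does not go through.

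\textbf{What Step~3 needs.} It needs the strong congruence
\[
b_t\,\pb_{sz}\equiv \pb_{tsz} \pmod{M},
\qquad
M:=\operatorname{span}_{\bbZ[v,v^{-1}]}\{\, b_w \mid \{s,t\}\subseteq L(w)\,\}.
\]
In words, every summand of $B_tB_{sz}$ other than $B_{tsz}$ must have \emph{both} $s$ and $t$ in its left descent set.

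\textbf{What your Step~2 gives.} It only records $t\in L(y)$. The ``Moreover'' clause is self-contradictory: a term of the first sum with exactly one of $s,t$ in its left descent set would have $t$, not $s$, so it cannot be $\pb_{stz'}$. The correct statement is that no such terms occur. Your justification is that $B_tB_sB_t\cong B_{tst}\oplus B_t$ in every characteristic, ``hence $B_s,B_t$ act on $W_{s,t}$-isotypic pieces as in characteristic zero''. This is not an argument. The Hecke category has no such decomposition that indecomposables respect, and the splitting of $B_tB_{sz}$ depends on the global object $B_{sz}$, not only on $W_{s,t}$.

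\textbf{Why the weak form is insufficient.} In Step~3 you extract the coefficient of $b_{\starop{s}{t}{y}}$ for $y$ occurring in $\pb_{sz}$. So $s\in L(y)$ and $L(\starop{s}{t}{y})\cap\{s,t\}=\{t\}$. An extra summand $\pb_{y''}$ with $L(y'')\cap\{s,t\}=\{t\}$ can contribute to exactly that coefficient. Your remark that such terms do not affect elements with $s$ alone is true but irrelevant: everything in $b_t\rH$ has zero coefficient there anyway. The second formula has the same defect with $s$ and $t$ exchanged.

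\textbf{The missing idea.} Since $\pb_{sz}\in b_s\rH$, write $\pb_{sz}=b_su$. Then
\[
b_sb_t\pb_{sz}=b_sb_tb_su=b_{sts}u+\pb_{sz}\equiv\pb_{sz}\pmod{M},
\]
because $b_{sts}\rH\subseteq M$. By Lemma~\ref{lem:myx_descentset} and unitriangularity, $M$ is also spanned by the $\pb_w$ with $\{s,t\}\subseteq L(w)$. Hence $B_sB_tB_{sz}\cong B_{sz}\oplus(\text{shifts of }B_w,\ \{s,t\}\subseteq L(w))$.

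Now induct on $l(z)$, with $z'$ minimal in its coset as in your Step~1.
\begin{itemize}
\item A summand $B_{tz'}$ of $B_tB_{sz}$ would make $B_sB_{tz'}$, and hence $B_{stz'}$, a summand of $B_sB_tB_{sz}$. This is impossible, since $stz'\neq sz$ and $t\notin L(stz')$.
\item A summand $B_{tsz'}$ with $z'\neq z$ has $l(z')<l(z)$. By induction, $b_s\pb_{tsz'}\equiv b_sb_t\pb_{sz'}\equiv\pb_{sz'}\pmod{M}$. This gives a forbidden summand $B_{sz'}$ of $B_sB_tB_{sz}$.
\end{itemize}
Hence $b_t\pb_{sz}\equiv\pb_{tsz}\pmod M$.

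Now compare with the characteristic zero congruences $b_tb_{sz'}\equiv b_{tsz'}$ and $b_tb_{stz'}\equiv b_{tz'}\pmod{M}$, which follow from the usual $\mu$-formula. This yields $\pmk_{sz',sz}=\pmk_{tsz',tsz}$ and $\pmk_{stz',sz}=\pmk_{tz',tsz}$. Swapping $s$ and $t$ covers the remaining string positions.
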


The star operation commutes with multiplication by $w_0$.
\begin{lemma}\label{lem:starop_w0}
	Let $w \in W$ and $s,t \in S$ such that $sts=tst$ and, moreover,  $|\{s,t\} \cap L(w)|=~1$. Then \[\starop{s}{t}{(ww_0)}= \starop{s}{t}{w}w_0.\]
	The same result holds for the right star operation.
	If, moreover, $W$ is of type $B_n$ or $C_n$, then
	\[\starop{s}{t}{(w_0w)}= w_0\starop{s}{t}{w}\]
\end{lemma}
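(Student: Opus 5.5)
We argue at the level of descent sets and the two possible candidates $sw,tw$. Put $w' := ww_0$. By Lemma~\ref{lem:w0DescentSet}, $L(ww_0)=S\setminus L(w)$. Hence
\[
\{s,t\}\cap L(w') = \{s,t\}\setminus L(w),
\]
which again has exactly one element. So $\starop{s}{t}{(w')}$ is defined.

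By Definition~\ref{def:star_operation}, $\starop{s}{t}{w}$ is the unique $u\in\{sw,tw\}$ with $|\{s,t\}\cap L(u)|=1$. Likewise, $\starop{s}{t}{(w')}$ is the unique $u'\in\{sww_0,tww_0\}=\{sw,tw\}\,w_0$ with $|\{s,t\}\cap L(u')|=1$. Right multiplication by $w_0$ is a bijection $\{sw,tw\}\to\{sww_0,tww_0\}$. For any $u$, Lemma~\ref{lem:w0DescentSet} gives $\{s,t\}\cap L(uw_0)=\{s,t\}\setminus L(u)$, so the condition $|\{s,t\}\cap L(u)|=1$ holds for $u$ exactly when it holds for $uw_0$. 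Therefore $\starop{s}{t}{w}\,w_0$ satisfies the defining property of $\starop{s}{t}{(ww_0)}$, and uniqueness gives equality.

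For the right star operation we apply the same argument to $w_0w$, now using $R(w_0u)=S\setminus R(u)$ from Lemma~\ref{lem:w0DescentSet}(2). Alternatively, we pass through inversion: $R(w)=L(w^{-1})$, and $(w_0w)^{-1}=w^{-1}w_0$ since $w_0^2=1$.

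For the second statement in types $B_n$ and $C_n$, $w_0$ is central by Convention~\ref{not:longest_elt}, so $w_0w=ww_0$ and $w_0\starop{s}{t}{w}=\starop{s}{t}{w}\,w_0$. The claim then follows from the first part. The right version holds in the same way.

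The only subtle point is confirming that Definition~\ref{def:star_operation} characterises the star by membership in $\{sw,tw\}$ together with the descent condition. The worry is whether exactly one element of $\{sw,tw\}$ satisfies that condition. This is the standard fact that $w$ lies in a left $\{s,t\}$-string, and Definition~\ref{def:star_operation} asserts it. We apply it to both $w$ and $w'$, since both satisfy the hypothesis.
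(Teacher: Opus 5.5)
Your proof is correct and follows the same route as the paper: you transfer the descent condition across multiplication by $w_0$ via Lemma~\ref{lem:w0DescentSet}, and then conclude by the uniqueness built into Definition~\ref{def:star_operation}. Your counting formulation $|\{s,t\}\setminus L(u)|=2-|\{s,t\}\cap L(u)|$ means you need not track which of $s,t$ the star moves into the descent set. You also make explicit two points the paper leaves implicit: the right-hand version $(w_0w)^{*}=w_0w^{*}$, which is the form used later in the paper, and the $B_n$/$C_n$ case via centrality of $w_0$.
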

\begin{proof}
	This is a consequence of Lemma \ref{lem:w0DescentSet}. Indeed, we may assume that $s \in L(w)$ and $t \not \in L(w)$, then $s \not \in L(ww_0)$ and $t \in L(ww_0)$ and we can apply the star operation. By Lemma \ref{lem:w0DescentSet}, $s \in L(\starop{s}{t}{w}w_0)$ and $t \not \in L(\starop{s}{t}{w}w_0)$. Thus, $\starop{s}{t}{(ww_0)}= \starop{s}{t}{w}w_0$.
\end{proof}

Lastly, the star operation works well with left/right order as explained in \cite[Corollary 4.12 and Theorem 4.13]{J1}.
\begin{lemma}\label{lem:starop_cell}
	Let $x,y \in W$.  Let $s,t \in S$ such that $sts=tst$ and further $|\{s,t\} \cap L(x)|=~1 = |\{s,t\} \cap L(y)|$. Then
	\begin{enumerate}
		\item $x \sim_L^p \starop{s}{t}{x}$,
		\item if $x \leq_R^p y$, then $\starop{s}{t}{x} \leq^p_R \starop{s}{t}{y}$ whence, if  $x \sim_R^p y$, then $\starop{s}{t}{x} \sim^p_R \starop{s}{t}{y}$
	\end{enumerate}
	The same result holds for the right star operation.
\end{lemma}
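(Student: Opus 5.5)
The lemma is stated on the authority of \cite[Corollary 4.12 and Theorem 4.13]{J1}. The plan is to reprove it by lifting the classical Kazhdan--Lusztig arguments for star operations to the Hecke category, keeping only facts that hold over every field $\Bbbk$. The key input is that for $m_{s,t}=3$ the rank two part is characteristic-free: $B_sB_tB_s\cong B_{sts}\oplus B_s$ and $B_{sts}$ is indecomposable in any characteristic. Throughout, write $x=x_0u$ with $x_0$ of minimal length in its left $\langle s,t\rangle$-coset. The elements of the coset with exactly one of $s,t$ in the left descent set are $sx_0,tx_0,stx_0,tsx_0$, and the star operation pairs $sx_0\leftrightarrow tsx_0$ and $tx_0\leftrightarrow stx_0$.

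\textbf{Part (1).} By symmetry I take $x=sx_0$ and ${}^{*_{s,t}}x=tsx_0$.
\begin{itemize}
\item \emph{$x\leq_L^p{}^{*_{s,t}}x$.} Since $t\notin L(x)$, the argument of Remark~\ref{rmk:myx_zero} shows that $\pb_{tsx_0}$ occurs in $b_t\,\pb_{sx_0}$ with coefficient $1$.
\item \emph{${}^{*_{s,t}}x\leq_L^p x$.} I show that $\pb_{sx_0}$ is a summand of $b_s\,\pb_{tsx_0}$ by a degree-zero computation in the standard basis.
\begin{itemize}
\item All lower terms $\pb_y$ with ${}^pm_{y,tsx_0}\neq0$ satisfy $t\in L(y)$ by Lemma~\ref{lem:myx_descentset}. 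Together with the lower terms of $b_t\pb_{sx_0}$, this shows that the coefficient of $\delta_{sx_0}$ in $\pb_{tsx_0}$ is exactly $v$.
\item Multiplying by $b_s=\delta_s+v$ then produces $\delta_{sx_0}$ with a coefficient having nonzero constant term. No element of length $>l(sx_0)$ in the expansion can absorb it, since the leading terms of $\pb_z$ are $\delta_z$ plus $v\mathbb{Z}[v]$-multiples of lower $\delta$'s.
\item Positivity of ${}^p\mu$ (\cite[Theorem 4.2]{JW1}) then forces ${}^p\mu^{sx_0}_{s,tsx_0}\neq0$.
\end{itemize}
\end{itemize}
The two inequalities together give $x\sim_L^p{}^{*_{s,t}}x$.

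\textbf{Part (2).} Suppose $\pb_y$ is a summand of $\pb_x\pb_w$; I want $\pb_{{}^{*_{s,t}}y}$ to be a summand of $\pb_{{}^{*_{s,t}}x}\,\pb_{w'}$ for some $w'$. Right multiplication commutes with the left action of $\mathcal H_{s,t}$. So the idea is to view right multiplication by $\pb_w$ as an endomorphism of the left $\mathcal H_{s,t}$-module (categorically, of the left $\langle B_s,B_t\rangle$-module category). Then decompose $\mathcal H$ into the pieces indexed by left $\langle s,t\rangle$-cosets and by the $\langle s,t\rangle$-descent type.

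On the subquotient spanned by elements with exactly one of $s,t$ in the left descent set, I will show that $\mathcal H_{s,t}$ acts through a cell module isomorphic to the reflection-type module of $S_3$. In this module the star operation is realised by left multiplication by $b_s$ or $b_t$ followed by projection, as in the Kazhdan--Lusztig proof. This uses the same degree-zero computation as in Part (1), now simultaneously for $x$ and $y$. Because the action of $\pb_w$ on the right commutes with this realisation, the multiplicity of $\pb_{{}^{*_{s,t}}y}$ in $\pb_{{}^{*_{s,t}}x}\pb_w$ (modulo terms with both or neither of $s,t$ as left descents) equals that of $\pb_y$ in $\pb_x\pb_w$, and is therefore nonzero. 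The right-handed version follows by applying Lemma~\ref{lem:myx_inverse}.

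\textbf{Main obstacle.} The hard step is the second part. In the naive approach the element $u\in\{s,t\}$ needed to pass from $x$ to ${}^{*_{s,t}}x$ can differ from the one needed for $y$. Moreover, lower terms with both $s$ and $t$ in the left descent set may pollute the multiplicities. I would first try to kill these by working modulo the ideal spanned by $\pb_z$ with $\{s,t\}\subseteq L(z)$, which is a right ideal and stable under the left $\mathcal H_{s,t}$-action. I would then check by hand the six-element coset computation using $B_sB_tB_s\cong B_{sts}\oplus B_s$.
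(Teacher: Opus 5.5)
The paper does not prove this lemma; it cites \cite[Corollary 4.12 and Theorem 4.13]{J1}. Judged on its own, your proposal has a flawed justification in Part (1) and a genuine gap in Part (2).

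\textbf{Part (1).} You use that every $\pb_z$ has its lower $\delta$-coefficients in $v\mathbb{Z}[v]$. This is false for the $p$-canonical basis. Already in type $B_2$ with $p=2$, $\pb_{s_1s_2s_1}=b_{s_1s_2s_1}+b_{s_1}$ has $\delta_{s_1}$-coefficient $1+v^2$. Also, the coefficients ${}^p\mu^z_{s,tsx_0}$ are only bar-invariant, so a $v^{-1}$ in them could turn a $v$ into a constant. Your conclusion survives here, but only through a finer count:
\begin{enumerate}
\item The $\delta_{sx_0}$-coefficient of $b_s\pb_{tsx_0}$ is exactly $1+v^2$.
\item The term $\pb_{stsx_0}$ contributes exactly $v^2$. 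This uses $-(\ell(x)-\ell(y))<\deg {}^pm_{y,x}<\ell(x)-\ell(y)$ together with Lemma~\ref{lem:myx_descentset}.
\item Each $z$ covering $sx_0$ contributes $v$ times a nonzero bar-invariant element of $\mathbb{Z}_{\ge0}[v^{\pm1}]$. Such a term never fits into the remaining constant $1$.
\end{enumerate}

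\textbf{Part (2).} To compare the multiplicity of $\pb_{{}^*y}$ in $\pb_{{}^*x}\pb_w$ with that of $\pb_y$ in $\pb_x\pb_w$, you need more than you establish. Let $I$ be the span of the $\pb_z$ with $\{s,t\}\subseteq L(z)$, and let $u$ be the generator in $\{s,t\}\setminus L(z)$. You need $b_u\pb_z\equiv\pb_{{}^*z}$ modulo $I$ \emph{exactly}. Your degree-zero computation only shows that $\pb_{{}^*z}$ occurs. Suppose $b_u\pb_x$ contained a further $\pb_{x'}$ with $|L(x')\cap\{s,t\}|=1$. Then positivity would only give ${}^*y\geq^p_R x'$ for some such $x'$. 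Such $x'$ can a priori lie in other $\langle s,t\rangle$-cosets, so no six-element coset computation can exclude them. The classical argument that excludes them rests on the KL degree bound for $\mu$, which is unavailable for ${}^p\mu$.

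\textbf{The missing idea} is an invertibility-plus-positivity argument.
\begin{enumerate}
\item By Lemma~\ref{lem:myx_descentset} and unitriangularity, $b_s\mathrm{H}$ has basis $\{\pb_z\mid s\in L(z)\}$. Hence $I=b_s\mathrm{H}\cap b_t\mathrm{H}$.
\item Set $M=(b_s\mathrm{H}+b_t\mathrm{H})/I=M_s\oplus M_t$. It is a left $\mathrm{H}_{\{s,t\}}$-, right $\mathrm{H}$-bimodule with basis the images of the $\pb_z$ with $|L(z)\cap\{s,t\}|=1$.
\item Since $b_{sts}=b_sb_tb_s-b_s\in b_s\mathrm{H}\cap b_t\mathrm{H}$, the element $b_{sts}$ acts by $0$ on $M$.
\item Let $\alpha\colon M_t\to M_s$, $m\mapsto b_sm$, and $\beta\colon M_s\to M_t$, $m\mapsto b_tm$. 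The identity $b_sb_tb_s=b_{sts}+b_s$ gives $(v+v^{-1})(\alpha\beta-\mathrm{id})=0$, and symmetrically for $\beta\alpha$. As $M$ is free, $\alpha$ and $\beta$ are mutually inverse, and they commute with the right $\mathrm{H}$-action.
\item Their matrices in the $p$-canonical basis have bar-invariant entries in $\mathbb{Z}_{\geq0}[v^{\pm1}]$ and are inverse to each other. Hence they are permutation matrices.
\item The obvious leading terms identify the permutation with ${}^*$: $\pb_{tsx_0}$ occurs in $b_t\pb_{sx_0}$, and $\pb_{stx_0}$ occurs in $b_s\pb_{tx_0}$.
\end{enumerate}
This yields (1) at once, making the degree count unnecessary. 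It yields (2) by applying $\beta$ or $\alpha$ to $\pb_x\pb_w$. It also resolves your worry that $x$ and $y$ might need different generators $u$: $x\leq_R^p y$ forces $L(x)\subseteq L(y)$, so $x$ and $y$ lie in the same half of $M$.
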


\subsubsection{Parabolic subgroups}

The last tool we will use is induction from parabolic subgroups of $W$.
\begin{definition}
	Let $I \subseteq S$. We write $W_I$ for the \textit{parabolic subgroup} of $W$ generated by the simple reflections in $I$. We denote by $W^{I}$ the set of minimal length of coset representatives in $W/W_I$.
\end{definition}

\begin{lemma}[{\cite[Lemma 3.12]{J1}}]\label{lem:redexprParabolic} Let $I \subseteq S$ and $w \in W^I$ with reduced expression $\underline{w}$ and $x \in W_I$ with a fixed reduced expression $\underline{x}$. Let $y \in W_I$, then there is a decoration and defect preserving bijection:
	\begin{align*}
		\{\text{subexpression of } \underline{x} \text{ for } y\} & \to \{\text{subexpression of } \underline{w}\underline{x} \text{ for }wy\} \\
		e                                                         & \mapsto \underline{w}e
	\end{align*}

\end{lemma}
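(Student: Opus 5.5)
We prove the bijection by comparing the decorations step by step. The starting point is that the prefix \(\underline{w}\) is taken with all entries equal to \(1\).

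Concretely, write \(\underline{w}=(t_1,\dots,t_m)\) and \(\underline{x}=(s_1,\dots,s_k)\). Given a subexpression \(e\) of \(\underline{x}\) for \(y\), let \(\underline{w}e\) denote the sequence \((1,\dots,1,e_1,\dots,e_k)\) on \(\underline{w}\underline{x}\).

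First I check that the map is well defined. The product of \(\underline{w}e\) is \(w\,s_1^{e_1}\cdots s_k^{e_k}=wy\), so \(\underline{w}e\) is a subexpression of \(\underline{w}\underline{x}\) for \(wy\). It is clearly injective. Note also that \(\underline{w}\underline{x}\) is reduced, since \(w\in W^I\) and \(x\in W_I\) give \(l(wx)=l(w)+l(x)\).

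Next I compare decorations. On the first \(m\) positions the partial products are \(t_1\cdots t_{i-1}\), and \(\underline{w}\) is reduced. Hence every one of these positions is decorated \(U1\), which contributes nothing to the defect.

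On the position \(m+i\), the partial product is \(wz\) with \(z=s_1^{e_1}\cdots s_{i-1}^{e_{i-1}}\in W_I\). The key fact is the standard property of minimal coset representatives: for \(w\in W^I\) and \(z\in W_I\) we have \(l(wz)=l(w)+l(z)\). Since \(s_i\in I\), both \(z\) and \(zs_i\) lie in \(W_I\), so
\[ l(wzs_i)-l(wz)=l(zs_i)-l(z). \]
Thus \(wzs_i>wz\) if and only if \(zs_i>z\), and the decoration at position \(m+i\) equals the decoration of \(e\) at position \(i\). Therefore decorations are preserved, apart from the leading block of \(U1\)'s, and so is the defect.

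It remains to prove surjectivity, which I expect to be the main obstacle. Let \(f\) be any subexpression of \(\underline{w}\underline{x}\) whose product is \(wy\). Write its product as \(w'u\), where \(w'\le w\) is the product of the chosen letters of \(\underline{w}\) and \(u\in W_I\) is the product of the chosen letters of \(\underline{x}\). Then \(w'u=wy\) with \(u,y\in W_I\), so \(w'W_I=wW_I\).

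Every element of the coset \(wW_I\) has the form \(wv\) with \(v\in W_I\) and length \(l(w)+l(v)\ge l(w)\), with equality only at \(w\) itself. On the other hand \(w'\) is a product of at most \(m=l(w)\) letters of \(\underline{w}\), so \(l(w')\le l(w)\). Hence \(l(w')=l(w)\), which forces \(w'=w\) and all \(m\) entries of \(f\) on \(\underline{w}\) to be \(1\), because the product of a proper subset of the letters has length less than \(m\). Consequently \(u=y\), and \(f=\underline{w}e\) where \(e\) is the restriction of \(f\) to \(\underline{x}\).
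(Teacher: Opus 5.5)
Your proof is correct, and it is the standard argument behind this result, which the paper does not prove but quotes from \cite[Lemma 3.12]{J1}. Additivity $l(wz)=l(w)+l(z)$ for $z\in W_I$ transports the decorations, up to a leading block of $U1$'s on $\underline{w}$, and minimality of $w$ in $wW_I$ forces every subexpression of $\underline{w}\underline{x}$ for $wy$ to be all ones on $\underline{w}$. The only fact you use tacitly is that every letter of $\underline{x}$ lies in $I$; this holds because the set of letters occurring in a reduced expression depends only on the element.
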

\begin{lemma}[{\cite[Theorem 3.9]{J1}}]\label{lem:order_parabolic}
	Let $I \subseteq S$ and $x,y \in W_I$ such that $x \leq_R^p y$ in $W_I$ if and only if for all $w \in W^I$, \[wx \leq_R^p wy \] in W.
\end{lemma}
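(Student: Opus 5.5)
The plan is to prove both implications via a ``coset compatibility'' statement for the \(p\)-canonical basis. The statement is: for \(w\in W^I\) and \(x,y,z\in W_I\), the multiplicity of \(\pb_{wy}\) in \(\pb_{wx}\pb_z\) equals \(\pmu_{x,z}^y\), the structure constant computed inside \(W_I\). Both directions then follow from this, together with one auxiliary fact about \(W_I\), described in the last paragraph.

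The first step is to compare intersection forms. By Lemma \ref{lem:redexprParabolic}, subexpressions of \(\underline{x}\) for \(y\) correspond bijectively to subexpressions of \(\underline{w}\,\underline{x}\) for \(wy\), preserving decorations and defects. Light leaves are built from decorations, so this bijection should induce identifications of the light leaf bases of \(\Hom^\bullet_{\not<y}(B_{\underline y},B_{\underline x})\) and \(\Hom^\bullet_{\not<wy}(B_{\underline w\underline y},B_{\underline w\underline x})\). It should also identify the local intersection forms \(I^d\) of \S\ref{subsubsec:localintform}, up to the invertible factor coming from the fixed prefix \(\underline w\). From this I would get \(\ph_{wy,wx}=\ph_{y,x}\). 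Arguing by induction on \(l(x)\) with the formula expressing \(\pb_x\) through the \(\ph\)'s, I would then deduce \(\pb_{wx}\equiv b_{\underline w}\,\pb_x\) modulo \(\pb_{u}\) with \(u\notin wW_I\). Equivalently, \(\pb_{wx}\) is the unique summand of \(B_{\underline w}\otimes {}^{\Bbbk}B_x\) supported on the coset \(wW_I\).

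Next, I would multiply on the right by \(\pb_z\) with \(z\in W_I\). Right multiplication by objects of \(\H_I\) preserves the coset filtration by \(wW_I\), so the coefficient of \(\pb_{wy}\) in \(\pb_{wx}\pb_z\) can be read off from the coset piece. That piece is \(b_{\underline w}\,\pb_x\pb_z\), hence the coefficient is \(\pmu_{x,z}^y\). This gives the forward implication at once: if \(\pb_y\) is a summand of \(\pb_x\pb_z\) in \(W_I\), then \(\pb_{wy}\) is a summand of \(\pb_{wx}\pb_z\) for every \(w\in W^I\), that is, \(wx\leq^p_R wy\).

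For the converse, take \(w=\mathrm{id}\in W^I\). It then suffices to show that for \(x,y\in W_I\), the relation \(x\leq_R^p y\) in \(W\) implies the same relation in \(W_I\). The right preorder is generated by multiplying by \(b_s\), \(s\in S\). So \(x\leq_R^p y\) in \(W\) gives a chain \(x=x_0,x_1,\dots,x_k=y\) in which \(\pb_{x_{i+1}}\) is a summand of \(\pb_{x_i}b_{s_i}\). The difficulty is that this chain may leave \(W_I\).

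I expect this to be the main obstacle, and I would try to use a right coset decomposition \(u=u_Iu^I\) with \(u^I\) minimal in \(W_I\backslash W\). The idea is to show that every summand of \(\pb_{x'}\pb_u\) with \(x'\in W_I\) lies in the coset \(W_I\cdot({}^{I}u)\) for some minimal representative \({}^{I}u\) that is trivial only when \(u\in W_I\). Then summands in \(W_I\) can only come from the \(W_I\)-part of \(u\). Concretely, I would apply the right-handed version of the previous two steps, via inversion and Lemma \ref{lem:myx_inverse}, to write \(\pb_u\) as a summand of \(\pb_{u_I}\pb_{u^I}\). I would then check that \(\pb_{x''}\pb_{u^I}\), for \(x''\in W_I\), only has summands \(\pb_{x'''u^I}\) with \(x'''\in W_I\), as the parabolic analogue of the first two steps predicts. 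If \(u^I\neq\mathrm{id}\), none of these lie in \(W_I\). Hence \(\pb_y\) must already appear in \(\pb_x\pb_{u_I}\), which is a relation inside \(W_I\).
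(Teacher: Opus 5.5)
Your forward implication is essentially right. By Lemma \ref{lem:redexprParabolic} the light leaves for $\underline{w}e$ can be taken to be $\mathrm{id}_{B_{\underline w}}$ tensored with the light leaf for $e$, so the local intersection forms agree exactly, with no extra factor. This is Lemma \ref{lem:hyx_parabolic}, and it yields Lemma \ref{lem:myx_parabolic}. The coefficient of $\pb_{wy}$ in $\pb_{wx}\pb_z$ is then $\pmu^y_{x,z}$. One correction: work modulo the span of the $\pb_u$ with $u$ in cosets $w'W_I$ for $w'<w$ in $W^I$, which is a right $\mathrm{H}_I$-submodule. The span of all $\pb_u$ with $u\notin wW_I$ is not a submodule: in type $A_2$ with $I=\{t\}$, $b_{ts}b_t=b_{tst}+b_t$.

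The converse, however, has a genuine gap. Reducing to $w=\mathrm{id}$ is fine, but everything then rests on your claim that, for $x''\in W_I$ and $u^I\in{}^IW\setminus\{\mathrm{id}\}$, all summands of $\pb_{x''}\pb_{u^I}$ lie in $W_Iu^I$. This is false. Left multiplication by $\mathrm{H}_I$ only preserves the filtration by right cosets $W_Iv$ with $v\leq u^I$. Your top-coset computation controls only $W_Iu^I$, and lower cosets, including $W_I$ itself, do contribute. For instance, take type $A_3$ with $I=\{s_1,s_3\}$. The element $u=s_2s_1s_3$ lies in ${}^IW$, so $u_I=\mathrm{id}$, and in every characteristic
\[
\pb_{s_1}\pb_{s_2s_1s_3}=b_{s_1}b_{s_2}b_{s_1}b_{s_3}=\pb_{s_1s_2s_1s_3}+\pb_{s_1s_3}.
\]
So $y=s_1s_3\in W_I$ is a summand of $\pb_x\pb_u$ for $x=s_1$, but not of $\pb_x\pb_{u_I}=\pb_{s_1}$. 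Both your intermediate claim and your conclusion that $\pb_y$ already occurs in $\pb_x\pb_{u_I}$ fail. The lemma itself survives here, since $\pb_{s_1}\pb_{s_3}=\pb_{s_1s_3}$, but your argument does not.

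What the converse actually requires is this: every summand $\pb_y$ with $y\in W_I$ of $\pb_{x''}\pb_v$, for $x''\in W_I$ and arbitrary $v\in{}^IW$, satisfies $x''\leq^p_R y$ inside $W_I$. This is a Geck-type induction statement. For instance, let $\Gamma\subseteq W_I$ be $\leq^p_R$-upward closed and $\mathfrak{J}=\mathrm{span}\{\pb_z\mid z\in\Gamma\}$. One would need the right ideal $\mathfrak{J}\mathrm{H}=\bigoplus_{v\in{}^IW}\mathfrak{J}\delta_v$ to be spanned by $\{\pb_{zv}\mid z\in\Gamma,\ v\in{}^IW\}$. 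Proving that means showing that the lower-coset summands of $\pb_z b_{\underline v}$ again have $W_I$-part in $\Gamma$, which is essentially the same problem and is exactly what your argument skips. In characteristic zero this is the content of Geck's theorem on induction of Kazhdan--Lusztig cells. Here it is the nontrivial input that the paper does not reprove but imports from \cite[Theorem 3.9]{J1}.
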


\begin{lemma}[{\cite[Corollary 3.10]{J1}}]\label{lem:hyx_parabolic}
	Let $I \subseteq S$ and $w \in W^I$ with reduced expression $\underline{w}$ and $x \in W_I$ with a fixed reduced expression $\underline{x}$. Let $y \leq x$. Then
	\[\ph_{wy,wx} = \ph_{y,x} \text{ and }  h_{y,x} = h_{wy,wx}.\]
\end{lemma}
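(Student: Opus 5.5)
The plan is to compare the local intersection forms of \S\ref{subsubsec:localintform} on both sides. By that description,
\[
\ph_{y,x}=\sum_d \mathrm{rank}\bigl(I^d_{\underline{x},\underline{y}}\otimes_O\Bbbk\bigr)v^d
\qquad\text{and}\qquad
\ph_{wy,wx}=\sum_d \mathrm{rank}\bigl(I^d_{\underline{w}\underline{x},\underline{w}\underline{y}}\otimes_O\Bbbk\bigr)v^d .
\]
Here \(\underline{w}\underline{x}\) and \(\underline{w}\underline{y}\) are reduced, since \(w\in W^I\) and \(x,y\in W_I\). It therefore suffices to show that, for every \(d\), the two integral forms are given by the same Gram matrix with respect to suitable bases. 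The claim \(h_{y,x}=h_{wy,wx}\) then follows by running the same argument over a field of characteristic zero, where \(\ph\) becomes \(h\).

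\textbf{Step 1: matching light leaves.} The light leaves in \(\Hom^d_{\not<wy}(B_{\underline{w}\underline{x}},B_{\underline{w}\underline{y}})\) are indexed by the subexpressions of \(\underline{w}\underline{x}\) expressing \(wy\) with defect \(d\). By Lemma \ref{lem:redexprParabolic}, these are exactly the \(\underline{w}e\), where \(e\) ranges over the subexpressions of \(\underline{x}\) for \(y\), and the decoration and the defect are preserved.

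Light leaves are built inductively from left to right, and we have freedom in choosing the intermediate reduced expressions and rex moves. We make the following choices.
\begin{itemize}
\item On the prefix \(\underline{w}\), every letter is decorated \(U1\), so the first part of the light leaf is \(\mathrm{id}_{B_{\underline{w}}}\).
\item At each later stage the current element has the form \(wy'\) with \(y'\in W_I\). Whenever a rex move is needed (to bring the next simple reflection \(s\in I\) to the end), we use a reduced expression of the form \(\underline{w}\,\underline{y'}\) and perform rex moves only inside the \(W_I\)-part.
\end{itemize}
With these choices, the light leaf for \(\underline{w}e\) equals \(\mathrm{id}_{B_{\underline{w}}}\otimes LL_e\), where \(LL_e\) is a light leaf for \(e\) in \(W_I\). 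Flipping diagrams gives the same statement for the dual light leaves. We may restrict to such choices because the rank of the local intersection form is independent of the choice of light leaves.

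\textbf{Step 2: comparing the pairings.} The intersection form sends a pair \((f,g)\) to the scalar \(c\) such that
\[
g\circ f\equiv c\cdot \mathrm{id}_{B_{\underline{wy}}}
\]
modulo morphisms factoring through lower elements. Downstairs we have \(LL'_{e'}\circ LL_e=c\,\mathrm{id}_{B_{\underline y}}+(\text{terms factoring through } B_{\underline z},\ z<y,\ z\in W_I)\). Tensoring on the left with \(\mathrm{id}_{B_{\underline{w}}}\), the lower terms factor through \(B_{\underline{w}\underline{z}}\) with \(wz<wy\). This uses that multiplication by \(w\) on the left is a strictly order-preserving map \(W_I\to W\), which holds because \(w\in W^I\). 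So the upstairs Gram matrix entry equals \(c\) as well, provided that \(\mathrm{id}_{B_{\underline{wy}}}\) is nonzero in the degree-zero quotient \(\Hom^0_{\not<wy}(B_{\underline{wy}},B_{\underline{wy}})\) and spans it as a free rank one module. This holds because the only degree-zero light leaf for \(wy\) inside \(\underline{wy}\) is the all-\(U1\) subexpression. Hence the Gram matrices coincide, and so do their ranks after \(\otimes_O\Bbbk\).

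\textbf{Main obstacle.} The main difficulty is Step 1: justifying that light leaves can be chosen compatibly with the factorisation \(\underline{w}\,\cdot\,(\text{expression in }W_I)\), and that the rank of the intersection form does not depend on these choices. I would handle this by invoking the standard independence statement from \cite[Section 27.3]{EMTW}, namely that changing rex moves alters light leaves only by unitriangular changes modulo lower terms. I would then check that reduced expressions of the shape \(\underline{w}\,\underline{y'}\) with prescribed last letter in \(I\) always exist, which is immediate since \(\ell(wy')=\ell(w)+\ell(y')\).
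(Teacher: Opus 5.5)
Your proposal is correct, and it is essentially the argument behind this statement, which the paper does not prove itself but imports from \cite{J1}: you match the light leaves for $\underline{w}\,\underline{x}$ at $wy$ with those for $\underline{x}$ at $y$ through the decoration- and defect-preserving bijection of Lemma~\ref{lem:redexprParabolic}, choosing them in the form $\mathrm{id}_{B_{\underline{w}}}\otimes LL_e$, and then observe that the Gram matrices of the two local intersection forms coincide. What you flag as the main obstacle is not really one: the local intersection form is defined intrinsically on the quotient Hom spaces, and light leaves built with any choice of intermediate reduced expressions and rex moves descend to bases of these quotients, so your compatible choice is just a change of basis and no unitriangularity argument is needed.
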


\begin{lemma}\label{lem:myx_parabolic}
	Let $I \subseteq S$ and $w \in W^I$ with reduced expression $\underline{w}$ and $x \in W_I$ with a fixed reduced expression $\underline{x}$. Let $y \leq x$. Then
	\[ \pmk_{wy,wx} = \pmk_{y,x}.\]
\end{lemma}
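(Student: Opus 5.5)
We prove $\pmk_{wy,wx}=\pmk_{y,x}$ by induction on $l(x)$ within $W_I$, using the relation from the background section:
\[
\pb_x = b_x + \sum_{z<x} b_z\Bigl(h_{z,x}-\ph_{z,x}-\sum_{z<y'<x}\pmk_{z,y'}\,\ph_{y',x}\Bigr).
\]
In this formula the coefficients are computed from a fixed reduced expression $\underline{x}$. For $w\in W^I$, the concatenation $\underline{w}\,\underline{x}$ is a reduced expression of $wx$, because $l(wx)=l(w)+l(x)$. So the analogous formula holds for $\pb_{wx}$ with respect to $\underline{w}\,\underline{x}$:
\[
\pmk_{u,wx} = h_{u,wx}-\ph_{u,wx}-\sum_{u<y'<wx}\pmk_{u,y'}\,\ph_{y',wx}.
\]
The first step is to see which $u$ can occur in this sum. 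Any $u\le wx$ is the product of a subexpression of $\underline{w}\,\underline{x}$. However, if $u$ is not of the form $wy$ with $y\le x$ in $W_I$, then $h_{u,wx}$ and $\ph_{u,wx}$ need not be transported. So I would only compare coefficients at elements of the form $wy$.

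For $u=wy$ with $y\le x$, Lemma~\ref{lem:hyx_parabolic} gives $h_{wy,wx}=h_{y,x}$ and $\ph_{wy,wx}=\ph_{y,x}$. It remains to handle the sum over intermediate $y'$ with $wy<y'<wx$ and $\ph_{y',wx}\neq 0$.

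The key claim is that $\ph_{y',wx}\neq0$ forces $y'=wy''$ for some $y''\in W_I$. Nonvanishing of $\ph_{y',wx}$ requires a subexpression of $\underline{w}\,\underline{x}$ evaluating to $y'$, and I would show that the subexpression can be taken to use all of $\underline{w}$. The natural argument is via the coset: $\ph_{y',wx}\neq 0$ means $\pb_{y'}$ is a summand of $b_{\underline{w}\,\underline{x}}$. Right multiplication by $b_s$ for $s\in I$ preserves the span of $\{b_u : u\in wW_I\}$ only modulo lower cosets, so this coset argument is delicate. A cleaner route is to argue via descent sets instead: $\pmk_{wy,y'}\neq0$ requires $L(y')\subseteq L(wy)$ by Lemma~\ref{lem:myx_descentset}. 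This constraint alone is not enough, so I would fall back on Lemma~\ref{lem:redexprParabolic} together with \cite[Corollary 3.10]{J1} itself, which is presumably proved by showing that the local intersection forms for pairs $(wy,wx)$ coincide with those for $(y,x)$.

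Rather than attacking the sum term by term, I would obtain the full identity in one step. The idea is to use a categorical statement: the indecomposable ${}^{\Bbbk}B_{wx}$ is a summand of $B_{\underline{w}}\otimes {}^{\Bbbk}B_x$, which is a summand of $B_{\underline{w}\,\underline{x}}$. Then I would argue that its character is ``$b_{\underline{w}}$ applied to $\pb_x$'' up to lower cosets, projected to the coset $wW_I$ via the parabolic (Deodhar) decomposition $\rH\cong \bigoplus_{w}\delta_w\rH_I$. Concretely, I would show by induction on $l(x)$ that
\[
\pb_{wx}=\sum_{y\le x}\pmk_{y,x}\,b_{wy}+(\text{terms } b_u,\ u\notin\{wy\}).
\]
The induction uses that the coefficient of $b_{wy}$ in $\pb_{wx}$ is determined by the same recursive formula with identical inputs, by Lemma~\ref{lem:hyx_parabolic} and the inductive hypothesis.

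The main obstacle is controlling the cross terms $u\notin wW_I$, more precisely $u$ with $u<wx$ whose minimal coset representative is strictly below $w$. One must show that such $u$ never feed back into coefficients at elements $wy$. This is where I expect to lean on the triangularity $\pmk_{z,y'}\neq0\Rightarrow z\le y'$, combined with the observation that if $y'$ lies in a coset $w'W_I$ with $w'<w$, then $wy\le y'$ is impossible. That would settle the sum, because the parabolic decomposition says $wy\le w'y''$ forces $w\le w'$.
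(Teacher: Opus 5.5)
Your final paragraph is, in substance, the paper's proof. In the recursion $\pmk_{wy,wx}=h_{wy,wx}-\ph_{wy,wx}-\sum_{wy<y'<wx}\pmk_{wy,y'}\ph_{y',wx}$, the first two terms are transported by Lemma~\ref{lem:hyx_parabolic}. Every $y'$ in the sum lies in $wW_I$, because the projection $u\mapsto u^I$ is order preserving \cite[Proposition 2.5.1]{BB1}, so $w=(wy)^I\le (y')^I\le (wx)^I=w$. Induction then finishes. The paper does exactly this; it inducts on $l(x)-l(y)$ rather than on $l(x)$, which makes no difference.

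Two corrections are needed. First, the ``key claim'' of your third paragraph, that $\ph_{y',wx}\neq 0$ forces $y'\in wW_I$, is false. Take type $A_2$ with $I=\{s_1\}$, $w=s_1s_2\in W^I$ and $x=s_1$. Then $b_{s_1}b_{s_2}b_{s_1}=b_{s_1s_2s_1}+b_{s_1}$, so $\ph_{s_1,s_1s_2s_1}=1$ while $s_1\notin wW_I$. Nothing in the argument needs that claim, since only the constraint $wy<y'$ built into the sum matters. You can therefore delete the descent-set detour, the appeal to the internals of \cite{J1}, the categorical paragraph and the triangularity remark.

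Second, saying the recursion has ``identical inputs'' requires the index sets $\{z\in W_I\mid y<z<x\}$ and $\{y'\mid wy<y'<wx\}$ to correspond under $z\mapsto wz$. That is, you need $u\le v\iff wu\le wv$ for $u,v\in W_I$. This follows from the subword property applied to $\underline{w}\,\underline{v}$ together with minimality of $w$ in $wW_I$. The paper also uses this fact implicitly.
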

\begin{proof}
	We proceed by induction on $l(x)-l(y)$. If $l(x)-l(y) =0$, then $y=x$ and the result is clear.
	Assume now that $l(x)-l(y) = k$ and that for all $y' \in W_I$ such that $l(x)-l(y')<k$, $\pmk_{wy',wx} = \pmk_{y',x}$.

	Recall that
	\[\pmk_{y,x} = h_{y,x}-\ph_{y,x}-\sum_{y<z<x}\pmk_{y,z}\ph_{z,x}.\]

	If $z \in W$ such that $y<z<x$, then $z \in W_I$ and $wy<wz<wx$. On the other hand, if $z \in W$ with $wy<z<wx$, then $z \in wW_I$ by \cite[Proposition 2.5.1]{BB1}. Hence, there is $z'\in W_I$ such that $z =wz'$.
	Thus, there is a bijection
	\begin{align*}
		\{z \in W_I \mid y<z<x\} & \to \{z \in W \mid wy<z<wx\} \\
		z                        & \mapsto wz.
	\end{align*}
	Therefore, by Lemma \ref{lem:hyx_parabolic} and using the induction hypothesis,
	\begin{align*}
		\pmk_{y,x} & = h_{y,x}-\ph_{y,x}-\sum_{y<z<x}\pmk_{y,z}\ph_{z,x}                         \\
		           & = h_{wy,wx}-\ph_{wy,wx}-\sum_{wy<wz<wx}\pmk_{wy,wz}\ph_{wz,wx}              \\
		           & = h_{wy,wx}-\ph_{wy,wx}-\sum_{wy<z<wx}\pmk_{wy,z}\ph_{z,wx} = \pmk_{wy,wx}.
	\end{align*}

\end{proof}

\section{The subregular cell in classical type}

We will now describe the $p$-cells for the subregular two-sided $0$-cell in classical types.

\begin{conv}[The subregular cell]\label{not:subregular} We write \(J_1\) for the ordinary
	subregular two-sided cell, i.e. the set of non-identity elements with a
	unique reduced expression.

	In types \(A_n\) and \(D_n\), every element of \(J_1\) is an oriented
	path:
	\[
		J_1=\{[a,b]\mid 1\leq a,b\leq n\}.
	\]
	The left cell with right descent \(s_i\), and the right cell with left
	descent \(s_i\), are
	\[
		L_{s_i}=\{[a,i]\mid 1\leq a\leq n\},
		\qquad
		R_{s_i}=\{[i,a]\mid 1\leq a\leq n\}.
	\]
	In particular,
	\[
		L_{s_i}\cap R_{s_i}=\{s_i\}.
	\]

	In types \(B_n\) and \(C_n\), the elements of \(J_1\) are of two kinds.
	First, there are the path elements
	\[
		[a,b]\qquad (1\leq a,b\leq n).
	\]
	Second, for \(a,b\geq 2\), there are the elements which go down to the
	exceptional edge, turn around, and then go back up: \[[a,1,b] \qquad (2\leq a,b\leq n).\]

	There is also the element $[1,2,1].$
	Thus
	\[
		J_1=
		\{[a,b]\mid 1\leq a,b\leq n\}
		\sqcup
		\{[a,1,b]\mid 2\leq a,b\leq n\}
		\sqcup
		\{[1,2,1]\}.
	\]

	With the convention that \(L_{s_i}\) denotes the left cell with right
	descent \(s_i\), one has
	\[
		L_{s_1}
		=
		\{[a,1]\mid 1\leq a\leq n\}\sqcup\{[1,2,1]\},
	\]
	and, for \(i\geq 2\),
	\[
		L_{s_i}
		=
		\{[a,i]\mid 1\leq a\leq n\}
		\sqcup
		\{[a,1,i]\mid 2\leq a\leq n\}.
	\]
	Similarly, \(R_{s_i}\) is obtained by reversing the words. Hence
	\[
		|L_{s_1}|=n+1,
		\qquad
		|L_{s_i}|=2n-1\quad (i\geq 2),
	\]
	and
	\[
		L_{s_1}\cap R_{s_1}=\{s_1,\ s_1s_{2}s_1\},
	\]
	while, for \(i\geq 2\),
	\[
		L_{s_i}\cap R_{s_i}
		=
		\{s_i,\ [i,1,i]\}.
	\]
\end{conv}

\begin{remark}
	The notation is chosen so that the first index records the left descent
	and the last index records the right descent. Thus, e.g.,
	\(L_{s_i}\) is indexed by the right endpoint \(i\).
\end{remark}
\subsection{The $p$-Kazhdan--Lusztig elements}
We first describe the decomposition of $\pb_x$ for $x \in J_1$.

Thanks to the star operation, we inductively show that if $x \in J_1$ is of the form $[a,b]$, then $\pb_x=b_x$.
\begin{lemma}\label{lem:pbx_shortestpath} Let $p$ be any prime number.
	Let $1\leq a,b,\leq n$ and $x = [a,b]$ be the shortest path. Then
	\[\pb_x=b_x. \]
\end{lemma}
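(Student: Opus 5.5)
We prove the statement by induction on $l(x)=|a-b|+1$ (in type $D_n$, on the length of the path), transporting the trivial case $x=s_a$ along the graph with the left star operation. The base case is immediate: for $x=s_a$ we have $B_{s}$ indecomposable in any characteristic, so $\pb_{s}=b_s$. Equivalently, Lemma~\ref{lem:myx_descentset} together with Remark~\ref{rmk:cantbeid} forces $\pmk_{y,s}=0$ for every $y<s$, since the only such $y$ is $\mathrm{id}$.

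For the inductive step, write $x=[a,b]$ with $a\neq b$ and let $c$ be the vertex adjacent to $a$ on the path. Then $x=s_a[c,b]$ with $L(x)=\{s_a\}$, and $L([c,b])=\{s_c\}$. The aim is to show $x=\starop{s_a}{s_c}{[c,b]}$ whenever $m_{s_a,s_c}=3$. Indeed $s_a\notin L([c,b])$, because $[c,b]$ has a unique reduced expression beginning with $s_c$; so the star operation sends $[c,b]$ to $s_a[c,b]=x$, whose left descent set meets $\{s_a,s_c\}$ in exactly $\{s_a\}$.

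The key step is to apply Lemma~\ref{lem:myx_starop}, which gives $\pmk_{y,x}=\pmk_{\starop{s_a}{s_c}{y},[c,b]}$ for all $y$ with $|L(y)\cap\{s_a,s_c\}|=1$. For the remaining $y$, Lemma~\ref{lem:myx_descentset} already kills $\pmk_{y,x}$ unless $s_a\in L(y)$. If moreover $s_c\in L(y)$, then $y\geq s_as_cs_a$ would have to lie below $x$. This is impossible, since $x$ has a unique reduced word and that word contains only one occurrence of $s_a$. Hence $\pmk_{y,x}\neq0$ implies $|L(y)\cap\{s_a,s_c\}|=1$, and then by induction $\pmk_{\starop{s_a}{s_c}{y},[c,b]}=0$ for $y\neq x$. (Note that $\starop{s_a}{s_c}{y}\neq[c,b]$, since the star operation is an involution.) Therefore $\pb_x=b_x$.

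The main obstacle is the edges with $m_{s,t}\neq3$: the exceptional edge in types $B_n$, $C_n$, i.e.\ $a=1$, $c=2$ or $a=2$, $c=1$. Here the star operation is unavailable, so we must avoid stepping across this edge on the left. We do this by growing the path from the other end instead. Using inversion (Lemma~\ref{lem:myx_inverse}), $[a,b]^{-1}=[b,a]$, so it suffices to treat $a\geq b$ when the path touches $s_1$. For such $x=[a,b]$ we always extend on the left end $a$, which moves away from $s_1$ along simply laced edges. The base of this chain is $[2,1]$ or $[1,2]$, where the star operation is not available. For these we note that the only $y<x$ with $L(x)\subseteq L(y)$ and $R(x)\subseteq R(y)$ would have to be a non-identity element below a length-two element with prescribed descents. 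For $x=s_2s_1$ this requires $y$ to have $s_2$ as left descent and $s_1$ as right descent, and no $y<x$ does, so $\pb_x=b_x$ directly by Lemma~\ref{lem:myx_descentset}.
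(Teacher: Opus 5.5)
Your proposal is correct and is essentially the paper's proof. Both argue by induction on length, apply the left star operation $*_{s_a,s_c}$ at the end of the path together with Lemma~\ref{lem:myx_starop}, use inversion via Lemma~\ref{lem:myx_inverse} so the exceptional edge is never crossed, and treat the length-two elements directly. Your use of the neighbouring vertex $c$ is, if anything, cleaner than the paper's $s_{a-1}$ at the fork in type $D_n$.

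One small fix: when ruling out $s_as_cs_a\leq x$, say that every simple reflection, not only $s_a$, occurs at most once in the path word, since $s_cs_as_c$ is also a reduced expression for this element.
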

\begin{proof}
	We proceed by induction on $l(x)$. If $l(x) \leq 2$, then we know that $\pb_x=b_x$.

	Assume now that $l(x) = l>2$ and that for all $x'$ of the form $[a',b']$ for some $1 \leq a', b'\leq n$ with $l(x')<l$, $\pb_{x'}= b_{x'}$.

	Let $y \leq x$ such that $\pmk_{y,x} \neq 0$. Suppose first that $a>b$. In particular, $a \geq 3$. By Lemma \ref{lem:myx_descentset}, $s_a \in L(y)$. Moreover, $s_{a-1} \not \in L(y)$ since $y$ is less than $x$ in the Bruhat order. Thus, we can apply the left star operation to $x$ and $y$ with respect to $\{s_a,s_{a-1}\}$ and obtain ${}^*x$ and ${}^*y$. Note that ${}^*x =[a-1,b]$. By Lemma \ref{lem:myx_starop} and the induction hypothesis,
	\[\pmk_{y,x} = \pmk_{{}^*y,{}^*x} = \begin{cases}
			1 \text{ if }{}^*y={}^*x, \\
			0 \text{ otherwise.}
		\end{cases}\]
	Therefore, $y=x$ and $\pb_x = b_x$.

	Suppose now that $b>a$. By the previous case, since $x^{-1}=[b,a]$, $\pb_{x^{-1}}=b_{x^{-1}}$. Thus, by Lemma \ref{lem:myx_inverse}, $\pb_x=b_x$.
\end{proof}

We can now assume that $W$ is of type $B_n$ or $C_n$ and that $x$ is of the form $[a,1,b]$ for $1 \leq a,b \leq n$. The description of the decomposition (Theorem \ref{lem:pbasis_subreg}) will be proven by induction, the base case being when $n=2$. We first describe the elements $z<x$ whose left or right descent sets satisfies certain properties.
The following lemma provides results used towards the induction step, considering the case $n \in \{a,b\}$.

\begin{lemma}\label{lem:techlem1}
	Assume that $W$ is of type $B_n$ or $C_n$. Let $2 \leq a,b \leq n$.
	\begin{enumerate}
		\item Let $x = [n,1,b]$ and $z<x$ with $s_n \in L(z) \subseteq \{s_n,s_{n-1}\}$. Then, there exists $z_1<[n-1,1,b]$ with  $z=s_nz_1$ and $l(z) = 1+l(z_1)$. Moreover, $L(z_1) \subseteq \{s_{n-1}\}$.
		\item  Let $x = [a,1,n]$ and $z<x$ with $s_n \in R(z) \subseteq \{s_n,s_{n-1}\}$. Then, there exists $z_1<[a,1,n-1]$ with  $z=z_1s_n$ and $l(z) = 1+l(z_1)$. Moreover, $R(z_1) \subseteq \{s_{n-1}\}$.
		\item Let $x = [n,1,n]$ and $z<x$ with $s_n \in L(z) \subseteq \{s_n,s_{n-1}\}$ and $s_n \in R(z) \subseteq \{s_n,s_{n-1}\}$. Then, either $z =s_n$ or there is $z_1<[n-1,1,n-1]$ with  $z=s_nz_1s_n$ and $l(z) = 2+l(z_1)$. Moreover, $L(z_1) = \{s_{n-1}\}$ and $R(z_1) = \{s_{n-1}\}$.
	\end{enumerate}
\end{lemma}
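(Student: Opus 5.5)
We argue with subexpressions of the fixed reduced word and with the exchange property, treating (1) directly, deducing (2) from it by inversion, and then combining the two for (3).

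\textbf{Part (1).} Write $x=[n,1,b]=s_n\,x'$ with $x'=[n-1,1,b]$ and $l(x)=1+l(x')$.

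Every $z<x$ is obtained from the reduced word $s_n s_{n-1}\cdots s_2 s_1 s_2\cdots s_b$ by deleting letters. Here $s_n$ occurs only once, as the first letter; this uses $b\le n$, and if $b=n$ there is a second $s_n$ at the end.

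Suppose first $b<n$. Since $s_n\in L(z)$, the letter $s_n$ must appear in $z$, so it is kept in the subword. Hence $z=s_nz_1$ with $z_1\le x'$, and $l(z)=1+l(z_1)$ because $s_n\notin L(z_1)$. Indeed, $z_1$ lies in the parabolic subgroup generated by $s_1,\dots,s_{n-1}$, so $s_n$ cannot be a descent of it.

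If $b=n$, the letter $s_n$ could a priori be kept only at the end. That would give $z=z'' s_n$ with $z''$ avoiding $s_n$, in which case $s_n\in L(z)$ forces $z''$ to commute with $s_n$. Then $z=s_nz''$, which we rewrite in the same form $s_nz_1$.

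Strictness $z_1<x'$ follows from $z<x$ and cancellation of the common left factor $s_n$, using the subword property.

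It remains to show $L(z_1)\subseteq\{s_{n-1}\}$. Suppose $t\in L(z_1)$ with $t\neq s_{n-1}$, where $t\ne s_n$ as noted.
\begin{itemize}
\item If $t$ commutes with $s_n$, i.e.\ $t=s_j$ with $j\le n-2$, then $tz=s_ntz_1$ has length $l(z)-1$. So $t\in L(z)$, contradicting $L(z)\subseteq\{s_n,s_{n-1}\}$.
\item Otherwise $t=s_{n-1}$ (or $t=s_n$, excluded), so every $t\ne s_{n-1}$ falls in the previous case and yields a contradiction.
\end{itemize}
In the $b=n$ case, where $s_n$ may also sit at the right end, this reasoning still applies to the left factor.

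\textbf{Part (2).} Part (2) is the image of part (1) under inversion $w\mapsto w^{-1}$. Inversion sends $[a,1,n]$ to $[n,1,a]$, exchanges $L$ and $R$, and preserves the Bruhat order.

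\textbf{Part (3).} Apply (1) to get $z=s_nz_1$ with $z_1<[n-1,1,n]$ and $L(z_1)\subseteq\{s_{n-1}\}$.

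If $z_1=e$, then $z=s_n$. Otherwise, $s_n\in R(z)$ gives two cases.
\begin{itemize}
\item If $s_n\in R(z_1)$, we apply (2) to $z_1$ inside $[n-1,1,n]$, whose last letter is the unique $s_n$. This gives $z_1=z_2s_n$ with $z_2\le[n-1,1,n-1]$.
\item If $s_n\notin R(z_1)$, then by the exchange property $z_1$ must be $s_n$-free and commute with $s_n$. Since $z_1\neq e$ lives in $\langle s_1,\dots,s_{n-1}\rangle$ and $L(z_1)\subseteq\{s_{n-1}\}$, its left descent $s_{n-1}$ does not commute with $s_n$. This is a contradiction.
\end{itemize}
So $z=s_nz_2s_n$ with $l(z)=2+l(z_2)$, and $z_2<[n-1,1,n-1]$ by cancellation.

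The descent conditions $L(z_2)\subseteq\{s_{n-1}\}$ and $R(z_2)\subseteq\{s_{n-1}\}$ follow by the same commuting argument as in (1). They are nonempty because $z_2\ne e$: the case $z_2=e$ would give $z=s_ns_n=e$, which is impossible. Hence both sets equal $\{s_{n-1}\}$.

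The main care point is the length bookkeeping when $s_n$ occurs twice (the case $b=n$). I would handle it by the parabolic-coset argument: $z_1\in W_{S\setminus\{s_n\}}$ up to the last letter.
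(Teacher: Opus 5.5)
Your proposal is correct and follows essentially the paper's route: you factor $z=s_nz_1$ in (1) (you get it from the subword property, the paper from a parabolic decomposition in Bj\"orner--Brenti), obtain (2) by inversion, and prove (3) by applying (1), then (2) when $s_n\in R(z_1)$, and excluding $s_n\notin R(z_1)$ because $z_1$ would then be a non-trivial element of $W_{\{s_1,\dots,s_{n-1}\}}$ commuting with $s_n$. The one step left implicit, here as in the paper, is why such a $z_1$ cannot have $s_{n-1}$ as a left descent: otherwise $\{s_{n-1},s_n\}\subseteq L(s_nz_1)=L(z_1s_n)$, so $s_nz_1$ has a reduced word beginning with the longest element of $\langle s_{n-1},s_n\rangle$, and deleting its first letter gives a reduced word for $z_1$ containing $s_n$, which is impossible.
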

\begin{proof} We first prove (1), the proof of (2) being entirely symmetric.

	By \cite[Proposition 2.5.1]{BB1}, there is $z_1<[n-1,1,b]$ and $z_0 \in W_{\{s_n\}}$ such that $z = z_0z_1$ and $l(z) = l(z_0)+l(z_1)$. Assume for a contradiction that $z_0$ is the identity, i.e. $z<[n-1,1,b]$. Since $s_n \in L(z)$, we must have $z \in W_{\{s_1,\dots,s_{n-2}\}}s_n$ (and $b=n$). If $z \neq s_n$, there is $1 \leq i \leq n-2$, such that $s_i \in L(z)$, which contradicts $L(z) \subseteq \{s_n,s_{n-1}\}$. Thus, $z=s_nz_1$ with $z_1<[n-1,1,b]$ and  $l(z) = 1+l(z_1)$.
	Moreover, $L(z_1) \subseteq \{s_{n-1}\}$ since $L(z) = \{s_n,s_{n-1}\}$.

	We now check (3), which is a corollary of (1) and (2). Assume that $z \neq s_n$. By (1), there exists $id \neq z_2<[n-1,1,n]$ with  $z=s_nz_2$ with $l(z) = 1+l(z_2)>1$. Moreover, $L(z_2) = \{s_{n-1}\}$. Observe that $R(z_2) \subseteq \{s_n,s_{n-1}\}$. If $s_n \not \in R(z_2)$, then $z_2<[n-1,1,n-1]$. Now that,  since $s_n \in R(z)$, we must in fact have $z_2 \in W_{\{s_1,\dots,s_{n-2}\}}$. Since $z_2\neq id$, this contradicts $R(z_2)\subseteq \{s_n,s_{n-1}\}$. Thus, $s_n \in R(z_2)$ and we may apply (2) to $z_2<[n-1,1,n]$. There exists $z_1<[n-1,1,n-1]$ with $z_2 = z_1s_n$ and $l(z_2)=1+l(z_1)$. Since $L(z_2) = \{s_{n-1}\}$, $z_1$ is not the identity. Moreover, $R(z_1) = \{s_{n-1}\}$. Therefore, $z=s_nz_1s_n$ for $z_1<[n-1,1,n-1]$ and $L(z_1)=R(z_1)=\{s_{n-1}\}$.
\end{proof}

\begin{lemma}\label{lem:subexpressionJ1}
	Assume that $W$ is of type $B_n$ or $C_n$. Let $2 \leq a,b \leq n$ and $x = [a,1,b]$. Let $z\leq x$ such that $L(z) = \{s_a\}$ and $R(z)=\{s_b\}$. Then
	\begin{itemize}
		\item either $z = [a,b]$
		\item or $z = [a,1,b]$.
	\end{itemize}
\end{lemma}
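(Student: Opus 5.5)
We argue by induction on \(n\), with the case \(n=2\) checked directly. There, \(W\) is dihedral of order \(8\) and \(x\in\{s_2s_1s_2\}\). The elements below \(s_2s_1s_2\) with \(L(z)=R(z)=\{s_2\}\) are \(s_2\) and \(s_2s_1s_2\) itself, and these are \([2,2]\) and \([2,1,2]\). We also note that the statement is symmetric under inversion: \([a,1,b]^{-1}=[b,1,a]\), inversion preserves the Bruhat order, and it swaps \(L\) and \(R\). Hence we may freely exchange the roles of \(a\) and \(b\).

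For the induction step, let \(n\geq 3\). First suppose \(n\notin\{a,b\}\). Then \(x\in W_I\) with \(I=\{s_1,\dots,s_{n-1}\}\), a parabolic subgroup of type \(B_{n-1}\) or \(C_{n-1}\). Every \(z\leq x\) also lies in \(W_I\), since it is a subword of a reduced word of \(x\). Descent sets computed in \(W_I\) agree with those in \(W\). So the claim follows from the induction hypothesis.

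Now suppose \(n\in\{a,b\}\); by the inversion symmetry we may take \(a=n\). We first treat \(b<n\). Here \(s_n\in L(z)=\{s_n\}\subseteq\{s_n,s_{n-1}\}\), so Lemma \ref{lem:techlem1}(1) gives \(z=s_nz_1\) with \(z_1<[n-1,1,b]\), \(l(z)=1+l(z_1)\) and \(L(z_1)\subseteq\{s_{n-1}\}\).

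\begin{itemize}
\item If \(z_1=\mathrm{id}\), then \(z=s_n\), and \(R(z)=\{s_b\}\) forces \(b=n\), a contradiction.
\item Otherwise \(L(z_1)=\{s_{n-1}\}\). We need \(R(z_1)=\{s_b\}\). Since \(z_1\in W_I\) and \(b<n\), the right descents of \(z_1\) in \(W_I\) determine those of \(s_nz_1\). If \(s_i\in R(z_1)\), then \(l(s_nz_1s_i)=l(z_1s_i)+1<l(z)\), so \(R(z_1)\subseteq R(z)\). Conversely, \(s_n\notin R(z)\) because \(b\neq n\).
\end{itemize}
So \(R(z_1)\subseteq\{s_b\}\), and it is nonempty since \(z_1\neq\mathrm{id}\). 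A small point: \(s_b\in R(z)\) with \(z_1\in W_I\) also gives \(s_b\in R(z_1)\), since \(s_n\) commutes past nothing relevant; exchange would force \(z s_b=z_1\), impossible for length and descent reasons. The induction hypothesis (with \(\leq\)) then gives \(z_1\in\{[n-1,b],[n-1,1,b]\}\), hence \(z\in\{[n,b],[n,1,b]\}\).

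Finally suppose \(a=b=n\). Lemma \ref{lem:techlem1}(3) gives either \(z=s_n=[n,n]\), or \(z=s_nz_1s_n\) with \(z_1<[n-1,1,n-1]\) and \(L(z_1)=R(z_1)=\{s_{n-1}\}\). In the second case the induction hypothesis gives \(z_1\in\{s_{n-1},[n-1,1,n-1]\}\). The option \(z_1=[n-1,1,n-1]\) would give \(z=[n,1,n]=x\), which is allowed since \(z\leq x\). The option \(z_1=s_{n-1}\) would give \(z=s_ns_{n-1}s_n\), and then \(L(z)=\{s_n,s_{n-1}\}\neq\{s_n\}\), so it is excluded.

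The case \(a=n\), \(b<n\) needs \(z<x\) strictly as well, or \(z=x\) directly. I would first split off \(z=x\) at the start, so that Lemma \ref{lem:techlem1} can be applied to \(z<x\). The main obstacle is the bookkeeping in the middle step: showing that \(R(z_1)=\{s_b\}\) exactly, that is, that right descents pass between \(z_1\) and \(s_nz_1\). I would handle this via the parabolic coset decomposition \(z=s_n\cdot z_1\), with \(s_n\) a minimal left coset representative relative to \(W_I\), together with \cite[Proposition 2.5.1]{BB1}.
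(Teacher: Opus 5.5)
Your proposal is correct and follows essentially the same route as the paper: induction on $n$, reduction to $n\in\{a,b\}$ (you handle $a<n=b$ by inversion, which the paper dismisses as ``similar''), then Lemma~\ref{lem:techlem1}(1) for $b<n$ and Lemma~\ref{lem:techlem1}(3) for $a=b=n$, with the induction hypothesis applied to $z_1$. Your ``small point'' about $s_b\in R(z_1)$ is superfluous, since $\emptyset\neq R(z_1)\subseteq R(z)=\{s_b\}$ already forces $R(z_1)=\{s_b\}$.
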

\begin{proof}
	If $n=2$, then it is clear as $x = s_2s_1s_2$. Assume now that the result holds for $B_k$ with $k<n$. In particular, if $a,b<n$, then $z \in \{[a,b],[a,1,b]\}$. Thus, we may assume that $n \in \{a,b\}$.

	Suppose first that $a=n$. If $z=s_n$, then $b=n$ and $z=[n,n]$. Thus, we may assume that $z \neq s_n$. By Lemma \ref{lem:techlem1} and $R(z_1) = \{s_b\}$, there exists $z_1<[n-1,1,b]$ with  $z=s_nz_1$ with $l(z) = 1+l(z_1)$. Moreover, $L(z_1) \subseteq \{s_{n-1}\}$ and $R(z_1) = \{s_b\}$.

	If $b<n$, then, by the induction hypothesis,  $z_1\in \{[n-1,b], [n-1,1,b]\}$, whence $z \in \{[n,b],[n,1,b]\}$.

	If $b=n$, then by Lemma \ref{lem:techlem1}, there is $z_1<[n-1,1,n-1]$ such that $z = s_nz_1s_n$ with $l(z)=l(z_1)+2$. Moreover, $L(z_1) = \{s_{n-1}\} = R(z_1)$. By the induction hypothesis,  $z_1\in \{s_{n-1}, [n-1,1,n-1]\}$. Since $L(z) = \{s_{n}\} $, $z_1 \neq s_{n-1}$, so $z_1 = [n-1,1,n-1]$ and $z=[n,1,n]$.

	The case where $a<n$ and $b=n$ is similar.
\end{proof}

In this lemma, we compute the coefficient of $b_{[n,{n-1},n]}$ in $\pb_x$ for $x=[n,1,n]$ in type $C_n$ using the graded rank of the local intersection form, (see \ref{subsubsec:localintform}).

\begin{lemma}\label{lem:pbx_n1n}
	Assume that $W$ is of type $C_n$ and $n \geq 3$. Let $x = [n,1,n]$ and $y=s_ns_{n-1}s_n$. Then \[\pmk_{y,x} =\begin{cases}
			1, & p=2               \\
			0  & \text{otherwise.}
		\end{cases}\]
\end{lemma}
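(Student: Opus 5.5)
We will compute \(\pmk_{y,x}\) from the formula
\[
\pmk_{y,x}=h_{y,x}-\ph_{y,x}-\sum_{y<z<x}\pmk_{y,z}\ph_{z,x},
\]
so the work splits into two parts. First we control the intermediate elements \(z\). Then we compute the local intersection form of \(B_{\underline{x}}\) at \(y\), where \(\underline{x}=s_n\cdots s_2s_1s_2\cdots s_n\).

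\textbf{Reduction to rank \(2\) or \(3\).} Consider \(y=s_ns_{n-1}s_n=[n,n-1,n]\) in Convention \ref{not:truncations}. The first step is to pass to a parabolic subgroup. Choose \(I=\{s_1,\dots,s_{n-1}\}\) acting on the right, or use the left star operation with respect to \(\{s_n,s_{n-1}\}\), which is applicable because \(L(x)=L(y)=\{s_n\}\). Under \({}^*\), both \(x\) and \(y\) lose their leading \(s_n\): we get \({}^*x=[n-1,1,n]\) and \({}^*y=s_{n-1}s_n\). These no longer have symmetric shape. A cleaner route is therefore to first handle \(n=3\) directly, and then reduce general \(n\) to \(n=3\). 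For the reduction we apply Lemma \ref{lem:myx_starop} on the left and on the right with respect to \(\{s_k,s_{k-1}\}\), which moves the turning point down. Alternatively we use Lemma \ref{lem:myx_parabolic} together with an inversion (Lemma \ref{lem:myx_inverse}).

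I would rather argue directly, in the style of Lemma \ref{lem:subexpressionJ1}. The candidates \(z\) with \(y<z<x\) and \(\pmk_{z,x}\neq 0\) must satisfy \(L(z)=R(z)=\{s_n\}\) by Lemma \ref{lem:myx_descentset}. By Lemma \ref{lem:subexpressionJ1} this forces \(z\in\{s_n,x\}\), so no such \(z\) lies strictly between \(y\) and \(x\). Hence \(\pmk_{y,x}=h_{y,x}-\ph_{y,x}\), and it suffices to compare \(h\) and \(\ph\) at \(y\). The same argument shows that \(\pmk_{z,y}\) does not interfere.

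\textbf{The intersection form.} Now enumerate the subexpressions \(e\) of \(\underline{x}\) (length \(2n-1\)) with \(\underline{x}^e=y\), and compute their decorations and defects, as in Example \ref{ex:defect}. The degree-\(0\) part of the local intersection form is what matters, since \(\pmk_{y,x}\) must be bar-invariant and \(\ph_{y,x}\leq h_{y,x}\) coefficientwise. The relevant subexpressions are those that keep the outer \(s_n\)'s and one \(s_{n-1}\), and cancel the rest of the inner word \(s_{n-1}\cdots s_1\cdots s_{n-1}\) down to a single \(s_{n-1}\). By Lemma \ref{lem:hyx_parabolic}, with \(w=s_n\) (or after stripping the outer \(s_n\)'s via Lemma \ref{lem:redexprParabolic}), this reduces to understanding how \([n-1,1,n-1]\) contains \(s_{n-1}\) in degree \(0\). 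I expect a single defect-\(0\) light leaf pair, giving a \(1\times1\) form.

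\textbf{Main obstacle.} The hardest step is evaluating this \(1\times1\) form explicitly in the diagrammatic Hecke category. Composing the light leaf with its flip produces a scalar which, after reducing the nested barbells and the \(s_1\)–\(s_2\) interaction, should be a Cartan matrix entry of the exceptional edge. In type \(C_n\) this entry is \(\alpha_{s_2}(\alpha_{s_1}^\vee)=-2\) with the orientation conventions fixed above, whereas in type \(B_n\) the corresponding entry is \(-1\). I would compute the case \(n=3\) by hand via polynomial forcing through the \(s_1\) dot, then argue that larger \(n\) only adds simply-laced strands contributing factors \(\pm1\).

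The conclusion is then as follows. For \(p\neq2\) the form is nondegenerate, so \(\ph_{y,x}=h_{y,x}\) and \(\pmk_{y,x}=0\). For \(p=2\) the form vanishes, so the rank drops by exactly one in degree \(0\) and \(\pmk_{y,x}=1\).
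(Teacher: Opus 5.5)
Your plan matches the paper's: show that the correction sum in $\pmk_{y,x}=h_{y,x}-\ph_{y,x}-\sum_{y<z<x}\pmk_{y,z}\ph_{z,x}$ vanishes, then read off $h_{y,x}-\ph_{y,x}$ from a $1\times1$ local intersection form in degree $0$. However, the step that kills the sum fails as written, for three reasons.

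\textbf{First}, $L(y)\neq\{s_n\}$. Since $s_ns_{n-1}s_n=s_{n-1}s_ns_{n-1}$, one has $L(y)=R(y)=\{s_{n-1},s_n\}$. In particular, the left star operation for $\{s_n,s_{n-1}\}$ is not even defined at $y$.

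\textbf{Second}, you constrain the wrong elements. The summands are $\pmk_{y,z}\ph_{z,x}$, so you must exclude $z$ with $\pmk_{y,z}\neq0$. A hypothesis on $\pmk_{z,x}$ says nothing about $\ph_{z,x}$.

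\textbf{Third}, Lemma~\ref{lem:myx_descentset} only yields inclusions. It cannot produce the equality $L(z)=R(z)=\{s_n\}$ that you need to invoke Lemma~\ref{lem:subexpressionJ1} with $a=b=n$.

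What $\pmk_{y,z}\neq0$ actually gives is $L(z),R(z)\subseteq\{s_{n-1},s_n\}$. Together with $y<z<x$, it also gives $s_n\in L(z)\cap R(z)$: otherwise the lifting property puts $z$ below $[n-1,1,n]$ or $[n,1,n-1]$, and neither of these lies above $y$. But $s_{n-1}$ may still be a descent of $z$, exactly as it is for $y$, so Lemma~\ref{lem:subexpressionJ1} does not apply to $z$ directly. The missing ingredient is Lemma~\ref{lem:techlem1}(3). It writes $z=s_nz_1s_n$ with $z_1<[n-1,1,n-1]$ and $L(z_1)=R(z_1)=\{s_{n-1}\}$. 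Only then does Lemma~\ref{lem:subexpressionJ1} force $z_1=s_{n-1}$, i.e.\ $z=y$, contradicting $z>y$.

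The intersection-form step also needs repair. Bar-invariance of $\pmk_{y,x}$ together with $\ph_{y,x}\le h_{y,x}$ does not let you ignore nonzero degrees. A rank drop of $I^{d}$ for some $d\neq0$ would simply contribute symmetric terms in $v^{\pm d}$. What is needed is the explicit defect count the paper does. The subexpressions of $\underline{x}$ for $y$ are $11e0\tilde e01$ and $10e0\tilde e11$, with defects $2n_e$ and $2n_e+2$. None is negative, so $I^d=0$ for $d\neq0$. The unique defect-$0$ subexpression gives a $1\times1$ form, and $h_{y,x}=1$.

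Two smaller points. Lemma~\ref{lem:hyx_parabolic} with $w=s_n$ does not apply here, since $x\notin s_nW_{\{s_1,\dots,s_{n-1}\}}$; the defect count makes it unnecessary anyway. Your heuristic that the extra simply-laced layers contribute only signs predicts the entry $\pm2$, whereas the paper records $(-2)^{n-2}$. For the lemma this is immaterial, since either way the entry is a nonzero even integer with no odd prime factor.
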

\begin{proof}
	Recall that \[\pmk_{y,x} = h_{y,x}-\ph_{y,x}-\sum_{y<z<x}\pmk_{y,z}\ph_{z,x}.\] We first compute $h_{y,x}$ and $\ph_{y,x}$ as the graded rank of the local intersection form, following \cite[Section 27.3]{EMTW}. The subexpressions of $[n,1,n]$ expressing $y$ are either of the form
	\begin{enumerate}
		\item $11e0\tilde{e}01$ for some subexpression $e$ of $[n-2,2]$ with $\tilde{e}$ the symmetric of $e$ in $[2,n-2]$,
		\item or $10e0\tilde{e}11$ for some subexpression $e$ of $[n-2,2]$ with $\tilde{e}$ the symmetric of $e$ in $[2,n-2]$.
	\end{enumerate} Let $n_e$ be the number of $0$ in $e$. In the first case, the subexpression has defect $n_e+1+n_e-1$. In the second case, it has defect $1+n_e+1 +n_e.$
	Thus, all subexpressions have non-negative defects. Therefore, the $d$-graded piece of the local intersection form is zero unless $d=0$. In that case, it is a single entry matrix of the form $(-2)^{n-2}$. Thus, $h_{y,x}=1$ and\[\ph_{y,x} =\begin{cases}
			0, & p=2               \\
			1  & \text{otherwise.}
		\end{cases}\]
	In particular, $\pmk_{y,x} = 0$ if $p \neq 2$.

	We now study the sum $\sum_{y<z<x}\pmk_{y,z}\ph_{z,x}$ and show that there is no $y<z< x$ such that $\pmk_{y,z}\neq 0$. Let $y<z\leq x$ such that $\pmk_{y,z}\neq 0$. If $\pmk_{y,z} \neq 0$, then $L(z) \subseteq L(y) = \{s_{n-1},s_n\}$ and $R(z) \subseteq R(y) = \{s_{n-1},s_n\}$  by Lemma \ref{lem:myx_descentset}. Note that since $y<z< x$, we must have $s_n \in L(z)$ and $s_n \in R(z)$. By Lemma \ref{lem:techlem1}, since $z\leq x$, there is $z_1<[n-1,1,n-1]$ with $l(z)= 2+l(z_1)$ and $L(z_1) =\{s_{n-1}\}=R(z_1)$.
	We conclude that $z_1\in \{s_{n-1},[n-1,1,n-1]\}$, by Lemma \ref{lem:subexpressionJ1}. Thus, $z \in \{s_ns_{n-1}s_n,[n,1,n]\} = \{x,y\}$ and since $z>y$, $z=x$. Therefore, the sum $\sum_{y<z<x}\pmk_{y,z}\ph_{z,x}=0$ and $\pmk_{y,x}=h_{y,x}-\ph_{y,x}= 1$ if $p=2$.
\end{proof}

\begin{theorem}[$p$-canonical basis for the subregular cell]\label{lem:pbasis_subreg}
	Let \(x\in J_1\).
	\begin{enumerate}[label=\textup{(\alph*)}]
		\item In types \(A_n\) and \(D_n\), for any prime $p$, one has
		      \[
			      \pb_x=b_x.
		      \]

		\item In type \(B_n\), if $p\neq 2$, then $\pb_x = b_x$. If $p=2$, the only change
		      inside the ordinary subregular cell is:
		      \[
			      \pb_{[1,2,1]}=b_{[1,2,1]}+b_{1}.
		      \]
		      All other elements \(x\in J_1\) satisfy \(\pb_x=b_x\).

		\item In type \(C_n\), if $p\neq 2$, then $\pb_x = b_x$. If $p=2$, the only elements
		      of \(J_1\) which change are of the form \([a,1,b]\) with
		      \(a,b\geq 2\). For these one has
		      \[
			      \pb_{[a,1,b]}
			      =
			      \sum_{k=1}^{\min(a,b)} b_{[a,k,b]}.
		      \]
		      All other elements \(x\in J_1\) satisfy \(\pb_x=b_x\).
	\end{enumerate}
\end{theorem}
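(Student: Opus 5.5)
Part (a) and all path elements \([a,b]\) are already handled by Lemma~\ref{lem:pbx_shortestpath}: in types \(A_n,D_n\) every element of \(J_1\) is of this form, and in types \(B_n,C_n\) the same lemma covers the path elements. So the work is confined to the elements \([a,1,b]\) with \(a,b\geq 2\) and \([1,2,1]\) in types \(B_n,C_n\).

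\textbf{Restricting the possible lower terms.} For \(x=[a,1,b]\) with \(a,b\ge 2\) we have \(L(x)=\{s_a\}\) and \(R(x)=\{s_b\}\). If \(\pmk_{y,x}\neq0\), Lemma~\ref{lem:myx_descentset} gives \(y\le x\) with \(s_a\in L(y)\) and \(s_b\in R(y)\). I first want to reduce to descent sets exactly \(\{s_a\}\) and \(\{s_b\}\), as in Lemma~\ref{lem:subexpressionJ1}. When \(a>2\), \(s_{a-1}\notin L(y)\) because the only element of \(\{s_a,s_{a-1}\}\) that can occur on the left of a subword of \(x\) before \(s_a\) is \(s_a\). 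Hence the left star operation \(*_{s_a,s_{a-1}}\) applies to both \(x\) and \(y\). It sends \([a,1,b]\mapsto[a-1,1,b]\), and by Lemma~\ref{lem:myx_starop} it preserves \(\pmk\). Using Lemma~\ref{lem:myx_inverse} to handle the right side symmetrically, I reduce to \(a=b=2\), i.e.\ \(x=s_2s_1s_2\). I must track how \(y\) transforms: the star operation maps \([a,k,b]\mapsto[a-1,k,b]\) for \(k<a\), and maps \([a,a,b]=[a,b]\) to \([a-1,b]\). So the predicted sums \(\sum_k b_{[a,k,b]}\) are compatible with the reduction, except that the number of terms drops by one when \(a=\min(a,b)\).

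\textbf{The boundary issue.} This drop in the number of terms shows that the naive reduction is too crude near the diagonal. In particular, for \(x=[n,1,n]\) the term \(b_{[n,n-1,n]}\) cannot come from star operations alone. I would therefore run an induction on \(n\) that combines three ingredients:
\begin{itemize}
\item Lemma~\ref{lem:techlem1}, which shows that any candidate \(y\) has the form \(s_ny_1\), \(y_1s_n\) or \(s_ny_1s_n\) with \(y_1\) below the truncated word;
\item Lemma~\ref{lem:subexpressionJ1}, applied inside the rank-\((n-1)\) parabolic;
\item Lemma~\ref{lem:pbx_n1n}, which supplies the new diagonal term \(b_{[n,n-1,n]}\) exactly when \(p=2\).
\end{itemize}
For the off-diagonal case \(a\neq b\), say \(a<b\), the element \(s_b\) occurs only once in \(x\), so \(x\) is of the form \(x'\cdot(\text{minimal coset rep})\). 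Right parabolic induction (Lemma~\ref{lem:myx_parabolic}, inverted via Lemma~\ref{lem:myx_inverse}) then transports the rank-\(b-1\) answer for \([a,1,b-1]\) to \([a,1,b]\), bijectively on the terms \([a,k,b]\).

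\textbf{Rank two and the remaining coefficients.} In rank two everything is explicit. For \(s_2s_1s_2\) in type \(C_2\) (long-root conventions), and for \(s_1s_2s_1\) in type \(B_2\), the single relevant local intersection form is \(\pm2\), as in Lemma~\ref{lem:pbx_n1n}. This gives \(\pb=b+b_{s}\) with \(s\) the outer letter exactly for \(p=2\). In the other orientation the form is \(\pm1\), so there is no change. This yields part (b)'s formula for \([1,2,1]\) and, since \([1,2,1]\) does not propagate under stars (its descents are \(s_1\)), the other elements of type \(B_n\) stay unchanged. For type \(C_n\) on the diagonal \(x=[n,1,n]\), I still need the coefficient of \(b_{[n,k,n]}\) for \(k<n-1\). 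Here I would use Remark~\ref{rmk:myx_zero} with \(s=s_n\): compute \(b_{s_n}\pb_{[n-1,1,n]}\), where the latter is known by induction, and check that each \(b_{[n,k,n]}\) appears with coefficient \(1\) and no spurious terms appear. Then I would compare with \(h-\ph\) via the same Lemma~\ref{lem:techlem1} argument as in Lemma~\ref{lem:pbx_n1n}. I expect this diagonal case to be the main obstacle, since pinning down the exact multiplicity \(1\) requires showing that every relevant local intersection form restricted to \(y=[n,k,n]\) has rank dropping by exactly one mod \(2\).
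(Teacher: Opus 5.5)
There is a genuine gap, and it sits at the diagonal elements $[n,1,n]$, on which your whole induction rests.

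\textbf{The opening star reduction is invalid.} You claim that $s_{a-1}\notin L(y)$ whenever $\pmk_{y,[a,1,b]}\neq 0$. This is false for $a\le b$. The element $[a,a-1,b]$ begins with $s_as_{a-1}s_a=s_{a-1}s_as_{a-1}$, so both $s_a$ and $s_{a-1}$ lie in its left descent set. In type $C_n$ with $p=2$ it genuinely occurs: for example $b_{[3,2,4]}$ appears in $\pb_{[3,1,4]}$, and $b_{s_ns_{n-1}s_n}$ appears in $\pb_{[n,1,n]}$ by Lemma~\ref{lem:pbx_n1n}. So the ``lost term'' is not a boundary artefact. It is a violation of the hypothesis of Lemma~\ref{lem:myx_starop}, and the reduction to $a=b=2$ is not available.

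\textbf{The off-diagonal step is fine.} For $a<b$ the letter $s_b$ occurs once in $x$, and every candidate $y$ must contain it. Lemma~\ref{lem:myx_parabolic} together with Lemma~\ref{lem:myx_inverse} then transports $\pb_{[a,1,b-1]}$ to $\pb_{[a,1,b]}$ term by term. This is a legitimate substitute for the paper's right star operation with respect to $\{s_b,s_{b-1}\}$. But iterating it bottoms out at $[a,1,a]$, so nothing in (b) or (c) beyond rank two is proved until the diagonal is.

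\textbf{The diagonal is left open.} You leave the coefficients of $b_{[n,k,n]}$, $k\neq n-1$, to an intersection-form computation that you do not carry out. Remark~\ref{rmk:myx_zero} alone only bounds the support from above. In type $B_n$, the remark that $[1,2,1]$ ``does not propagate'' does not exclude, e.g., $y=s_ns_{n-1}s_n$ from $\pb_{[n,1,n]}$.

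The missing idea, which is how the paper proceeds, is to split the candidates $y$ according to whether $s_{n-1}\in L(y)$.
\begin{itemize}
\item If $s_{n-1}\notin L(y)$, then $*_{s_n,s_{n-1}}$ is defined on both $x=[n,1,n]$ and $y$. It sends $x$ to the already known off-diagonal element $[n-1,1,n]$. It matches $[n,k,n]\leftrightarrow[n-1,k,n]$ for $k\le n-2$, and $s_n\leftrightarrow s_{n-1}s_n$. Lemma~\ref{lem:myx_starop} then gives all these coefficients at once, with no new geometry.
\item If $s_{n-1}\in L(y)$, apply Remark~\ref{rmk:myx_zero} to $b_{s_n}\pb_{[n-1,1,n]}$, which is known from the off-diagonal step. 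It shows that $y$ is either some $[n,k,n]$ or lies below some $[n-1,k,n]$. In the latter case $y\in W_{\{s_1,\dots,s_{n-2}\}}s_n$, since $s_n$ occurs once there and $s_n\in L(y)$, which contradicts $s_{n-1}\in L(y)$.
\end{itemize}
So the only possible such $y$ is $s_ns_{n-1}s_n$. In type $B_n$, and for $p\neq 2$, it cannot occur at all: there $\pb_{[n-1,1,n]}=b_{[n-1,1,n]}$ and $s_ns_{n-1}s_n\not\le[n-1,1,n]$. In type $C_n$ with $p=2$, its coefficient is exactly the single intersection-form computation of Lemma~\ref{lem:pbx_n1n}. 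No further local intersection forms are needed.
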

\begin{proof}
	If $W$ is of type $A_n$ or $D_n$, this is a direct consequence of Lemma \ref{lem:pbx_shortestpath}.

	We can now assume that $W$ is of type $B_n$ or $C_n$. We proceed by induction on~$n$. The case $n=2$ can be found in \cite[Example 5.1]{JW1}. Assume now that $n\geq 3$ and that the result holds for $B_{n-1}$ and $C_{n-1}$. Let $x \in W$.

	Thanks to Lemma \ref{lem:pbx_shortestpath}, if $x = [a,b]$ for some $1 \leq a,b \leq n$, then \[\pb_x= b_x.\]

	By the induction hypothesis, if $x = [a,1,b]$ for $2 \leq a,b<n$, then \begin{itemize}
		\item in type $B_n$, as well as in type $C_n$ for $p\neq 2$, \[\pb_x=b_x,\]
		\item while in type $C_n$ for $p=2$,  \[\pb_x = \sum_{k=1}^{\min(a,b)}b_{[a,k,b]}.\]
	\end{itemize}
	If $x =[1,2,1]$, by the induction hypothesis,
	\begin{itemize}
		\item in type $B_n$ for $p=2$, \[\pb_{s_1s_2s_1} = b_{s_1s_2s_1}+b_1,\]
		\item while, in type $C_n$, as well as in type $B_n$ for $p\neq 2$, \[\pb_{s_1s_2s_1} = b_{s_1s_2s_1}.\]
	\end{itemize}

	We may thus assume that $x = [a,1,b]$ for some $2 \leq a,b \leq n$ with $n \in \{a,b\}$. Let $y \leq x$ such that $\pmk_{y,x} \neq 0$.

	\textbf{Case 1:} Suppose first that  $x = [a,1,n]$ for some $2 \leq a < n$. By Lemma \ref{lem:myx_descentset}, $s_n \in R(y)$. Moreover, $s_{n-1} \not \in R(y)$ since $y \leq [a,1,n]$ and $a<n$. We apply the right star operation to $x$ and $y$ with respect to $\{s_n,s_{n-1}\}$ to obtain $x^*$ and $y^*$. Note that $x^* =[a,1,n-1]$. By Lemma \ref{lem:myx_starop} and the induction hypothesis, we obtain \begin{itemize}
		\item in type $B_n$, as well as in type $C_n$ for $p\neq 2$,
		      \[\pmk_{y,x} = \pmk_{y^*,x^*} = \begin{cases}
				      1 \text{ if }y^* =x^*, \\
				      0 \text{ otherwise,}
			      \end{cases}\]
		\item while, in type $C_n$ for $p=2$,
		      \[\pmk_{y,x} = \pmk_{y^*,x^*} = \begin{cases}
				      1  \text{ if } y^*=[a,k,n-1] \text{ for some } 1 \leq k \leq a, \\
				      0 \text{ otherwise.}
			      \end{cases}\]
	\end{itemize}
	Observing that if $y^*=[a,k,n-1] $ for some $1 \leq k \leq a$, then $y = [a,k,n]$, we deduce that
	\begin{itemize}
		\item in type $B_n$, as well as in type $C_n$ for $p\neq 2$, \[\pb_x=b_x,\]
		\item while, in type $C_n$ for $p=2$, \[\pb_{[a,1,n]}=\sum_{k=1}^{\min(a,n)} b_{[a,k,n]}.\]
	\end{itemize}

	\textbf{Case 2:} Assume that $x = [n,1,b]$ for some $2 \leq b < n$. Note that $x^{-1}=[b,1,n]$ corresponds to \textbf{Case 1}. Applying Lemma \ref{lem:myx_inverse} allows us to conclude that,
	\begin{itemize}
		\item in type $B_n$, as well as in type $C_n$ for $p\neq 2$, \[\pb_{x} = b_x\]
		\item while, in type $C_n$ for $p=2$, \[\pb_x = \sum_{y \in W}\pmk_{y^{-1},x^{-1}}b_y
			      =
			      \sum_{k=1}^{b} b_{[b,k,n]^{-1}} = \sum_{k=1}^{b} b_{[n,k,b]}.\]
	\end{itemize}

	We may now finally assume that $x = [n,1,n]$.

	\textbf{Case 3a:} Assume that $x=[n,1,n]$ and that $s_{n-1} \not \in L(y)$. We apply the left star operation to $x$ and $y$ with respect to $\{s_n,s_{n-1}\}$ to obtain ${}^*x$ and ${}^*y$. Since ${}^*x =[n-1,1,n]$, Lemma \ref{lem:myx_starop} and \textbf{Case 1} imply that:
	\begin{itemize}
		\item in type $B_n$, as well as in type $C_n$ for $p\neq 2$,
		      \[\pmk_{y,x} = \pmk_{{}^*y,{}^*x} = \begin{cases}
				      1 \text{ if }{}^*y={}^*x \iff y=x, \\
				      0 \text{ otherwise,}
			      \end{cases}\]
		\item while, in type $C_n$ for $p=2$,
		      \[\pmk_{y,x} = \pmk_{{}^*y,{}^*x} = \begin{cases}
				      1  \text{ if for some  } 1 \leq k \leq n-1, \quad {}^*y=[n-1,k,n], \\
				      0 \text{ otherwise.}
			      \end{cases}\]
	\end{itemize}
	We observe that if ${}^*y=[n-1,k,n]$, then $y=[n,k,n]$ unless $k=n-1$, in which case $y = s_n$.

	\textbf{Case 3b:} Assume that $W$ is either of type $B_n$ or of type $C_n$ with $p\neq 2$, and that $x=[n,1,n]$. We show that if $y<x$ with $\pmk_{y,x} \neq 0$ then $s_{n-1} \not \in L(y)$.
	Since $\pmk_{y,x} \neq 0$, we see that $b_y$ appears as a summand in $b_{s_n}\pb_{[n-1,1,n]}$ by Remark \ref{rmk:myx_zero}. Now by \textbf{Case 1}, we know $\pb_{[n-1,1,n]}$. We compute
	\begin{align*}
		b_{s_n}\pb_{[n-1,1,n]} & = b_{s_n}b_{[n-1,1,n]}                                    \\
		                       & =b_{[n,1,n]}+ \sum_{z<[n-1,1,n]}\mu^z_{s_n,[n-1,1,n]}b_z.
	\end{align*}
	In particular, since $y<x$, we must have $y<[n-1,1,n]$. Since $s_n \in R(y)$, there is $y_1<[n-1,1,n-1]$ such that $y=y_1s_n$ with $l(y)=l(y_1)+1$. Now, since $s_n \in L(y)$, we must have $y_1 \in W_{\{s_1, \dots, s_{n-2}\}}$ and in particular, $s_{n-1} \not \in L(y)$.

	Therefore, if $W$ is of type $B_n$ or of $C_n$ with $p\neq 2$, then by \textbf{Case 3a}, \[\pb_{[n,1,n]} =b_{[n,1,n]}.\]

	\textbf{Case 3c:} Lastly, assume that $W$ is of type $C_n$, $p=2$ and $x =[n,1,n]$. Using \textbf{Case 1} we have
	\begin{align*}
		b_{s_n}\pb_{[n-1,1,n]} & = \sum_{k=1}^{n-1}b_{s_n}b_{[n-1,k,n]}.                                                    \\
		                       & =\sum_{k=1}^{n-1}b_{[n,k,n]}   +\sum_{k=1}^{n-1}\sum_{z<[n-1,k,n]}\mu^z_{s_n,[n-1,k,n]}b_z
	\end{align*}
	By the same reasoning as for \textbf{Case 3b}, we conclude that, if $\pmk_{y,x} \neq 0$, then either $y\in W_{
				\{s_1,\dots,s_{n-2}\}}s_n$ (in particular $s_{n-1}\not \in L(y)$) or $y = [n,k,n]$ for some $1 \leq k \leq n$. By \textbf{Case 3a}, we know that, if $s_{n-1} \not \in L(y)$, then $\pmk_{y,x} = 1$ if and only if $y = [n,k,n]$ for some $1 \leq k \leq n$ (with $k \neq n-1$).
	If $s_{n-1} \in L(y)$, then $y = s_ns_{n-1}s_n$ and $\pmk_{y,x}=1$ by Lemma \ref{lem:pbx_n1n}.
	Therefore, \[\pb_{[n,1,n]}
		=
		\sum_{k=1}^{n} b_{[n,k,n]}.\]

	This concludes the description of the elements $\pb_x$ for $x \in J_1$.
\end{proof}

\subsection{The $p$-cells coming from the subregular cell}

We now consider how the cell $J_1$ distributes in characteristic $p$. We start by showing that $J_1$ decomposes into a union of $p$-cells.

The first step is to verify that for $v \in J_1$, the canonical basis element $b_v$ does not appear as a coefficient of any $\pb_w$ for $w \not \in J_1$.
\begin{lemma}
	Let $w \in W \setminus \{id\}$ and $v \in J_1$. Then \[\pmk_{v,w} \neq 0 \implies w \in J_1.\]
\end{lemma}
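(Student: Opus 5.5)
Suppose $w\neq id$, $v\in J_1$, and $\pmk_{v,w}\neq 0$. By Lemma \ref{lem:myx_descentset} we have $v\leq w$, $L(w)\subseteq L(v)$ and $R(w)\subseteq R(v)$. An element of $J_1$ has a unique reduced expression, so its left and right descent sets are singletons: $L(v)=\{s\}$ and $R(v)=\{t\}$, where $s$ and $t$ are the first and last letters of that expression. Since $w\neq id$, both $L(w)$ and $R(w)$ are nonempty (Remark \ref{rmk:cantbeid}). Hence
\[
L(w)=\{s\}, \qquad R(w)=\{t\}.
\]
The plan is to show that an element with a single left descent and a single right descent, which also dominates the element $v$ of $J_1$ with those descents, must lie in $J_1$. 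In general this fails for elements of large length, so the Bruhat relation $v\leq w$ alone is not enough; we also have to use $\pmk_{v,w}\neq0$ more seriously.

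The idea is to run an induction on $l(w)$ using Remark \ref{rmk:myx_zero}. Write $w=sw'$ with $l(w')=l(w)-1$. Since $\pmk_{v,w}\neq 0$, the element $b_v$ appears in
\[
b_s\pb_{w'}=\sum_{u}\pmk_{u,w'}\,b_sb_u .
\]
For each term, $b_v$ occurs in $b_sb_u$ only if either $v=su$ with $su>u$, or $v<u$ with $sv<v$ and $\mu(v,u)\neq0$ (the ordinary KL multiplication rule). The ordinary theory says that $J_1$ is a two-sided cell and that $\leq_J$ is compatible with multiplication. So $b_v$ appearing in $b_sb_u$ forces $v\leq_J u$; also $\pmk_{u,w'}\neq0$ forces $u\leq_J w'$ in the ordinary sense, because $\pb_{w'}$ is a summand of $b_{\underline{w'}}$, so every $b_u$ occurring in it has $u\leq_J w'$.

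For the upper bound this gives only $v\leq_J w$, which is the wrong direction. So I would use the cell order from both sides instead. The key fact is that $\pb_w$ is a summand of $b_{\underline w}$, so each $b_u$ occurring in $\pb_w$ satisfies $u\leq_J w$ ordinarily. Since $J_1$ is the unique minimal nontrivial two-sided cell, $v\leq_J w$ holds automatically and carries no information. The obstacle is therefore to obtain the reverse relation $w\leq_J v$.

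For the reverse relation I would use the $\mathbf{a}$-function, or more elementarily the descent sets together with the known structure: every element $w\neq id$ with $|L(w)|=|R(w)|=1$ outside $J_1$ lies in a strictly higher cell. This is not automatic, so I would instead do the case analysis directly. Elements with $v<w$ and $\pmk_{v,w}\neq0$ must have $b_v$ appearing in $b_s\pb_{sw}$. Inductively, the terms $b_u$ of $\pb_{sw}$ with $u\notin J_1\cup\{id\}$ contribute only $b_z$ with $z\notin J_1$, by the induction hypothesis combined with the fact that $b_sb_u$ for $u\geq_J$ a higher cell stays in higher cells ($J_1\cup\{id\}$ being a lower ideal for $\leq_J$). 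Hence $b_v$ must come from some $b_sb_u$ with $u\in J_1\cup\{id\}$. Using the multiplication rule, $b_sb_u$ for $u\in J_1$ expands as $b_{su}$ plus elements of $J_1$. Therefore $w=su$ has a unique reduced expression or lies in $J_1$, so $w\in J_1$. I expect the main difficulty to be justifying rigorously that $J_1\cup\{id\}$ is an ideal, i.e. that products $b_sb_u$ with $u\notin J_1\cup\{id\}$ involve no $b_z$ with $z\in J_1$. This should follow from $\leq_L$ being a preorder whose minimal classes are $\{id\}$ and then $J_1$.
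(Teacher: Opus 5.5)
\textbf{The gap is in your closing step.} Your final paragraph has the same skeleton as the paper's proof. Remark \ref{rmk:myx_zero} puts $b_v$ inside $b_s\pb_{sw}=\sum_u \pmk_{u,sw}\,b_sb_u$. The minimality of $J_1$ above $\{id\}$ then forces the relevant $u$ into $J_1\cup\{id\}$.

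The ``ideal'' fact you single out as the main difficulty is actually one line, and it is exactly the paper's opening observation. For $u\neq id$ and $t\in L(u)$, $b_u$ occurs in $b_tb_{tu}$, so $t\leq_R u$ and hence $u\geq_J J_1$. So if $b_z$ with $z\in J_1$ occurs in $b_sb_u$, then $u\leq_L z$, hence $u\sim_J z$ and $u\in J_1$. No induction is needed for this.

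From ``$b_v$ occurs in $b_sb_u$ with $u\in J_1$'' you conclude ``$w=su$''. This does not follow: $u$ is an arbitrary term of $\pb_{sw}$ lying in $J_1$, not $sw$ itself. It fails already inside the lemma's hypotheses. Take type $C_3$ with $p=2$. Theorem \ref{lem:pbasis_subreg} gives $\pmk_{s_3,[3,1,3]}=1$ and $\pb_{[2,1,3]}=b_{[2,1,3]}+b_{s_2s_3}$. The summand $b_{s_3}$ of $b_{s_3}\pb_{[2,1,3]}$ comes from $u=s_2s_3$, since $b_{s_3}b_{s_2s_3}=b_{s_3s_2s_3}+b_{s_3}$. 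Yet $s_3u=s_3s_2s_3\neq[3,1,3]$. As written, your induction does not close.

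\textbf{What is missing is the induction hypothesis applied to the pair $(u,sw)$.}
\begin{itemize}
\item If $u=id$, then $\pmk_{id,sw}\neq 0$ forces $sw=id$ by Remark \ref{rmk:cantbeid}, so $w=s\in J_1$.
\item If $u\in J_1$, then $sw\neq id$, $l(sw)<l(w)$ and $\pmk_{u,sw}\neq 0$, so the induction hypothesis gives $sw\in J_1$.
\item You already showed $L(w)=\{s\}$. So every reduced expression of $w$ begins with $s$ and continues with a reduced expression of $sw$, which is unique. Hence $w$ has a unique reduced expression, i.e.\ $w\in J_1$.
\end{itemize}
In your write-up the induction hypothesis is invoked at the step excluding $u\notin J_1\cup\{id\}$, where it is not needed, and is absent exactly where it is needed.

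The first half of your proposal can simply be dropped. It claims that every $b_u$ occurring in $b_{\underline{w'}}$ satisfies $u\leq_J w'$, and discusses which direction of $\leq_J$ is required. That claim is unjustified as stated and plays no role. With the paper's convention, $b_v$ occurring in $b_sb_u$ gives $u\leq_L v$, not $v\leq_J u$.
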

\begin{proof}
	We first observe that for $w \in W \setminus \{id\}$, we always have $w \geq_J^0 v$. Indeed, let $s \in L(w)$. Then, $\mu_{s,sw}^w = 1$, hence $s \leq_R^0 w$. Thus, $s \leq_J^0 w$ and since $s \in J_1$, also $v \leq_J^0 w$.

	Now let $w \in W \setminus \{id\}$ and $v \in J_1$. We show that, if $\pmk_{v,w} \neq 0$, then $w \in J_1$.
	We proceed by induction on $l(w)$. If $l(w)=1$, then $\pb_w = b_w$ and $v=w \in J_1$.

	Now assume that $l(w)>1$ and $\pmk_{v,w} \neq 0$. By Lemma \ref{lem:myx_descentset},  $L(w) \subseteq L(v)$. Let $s \in S$ such that $L(v) = \{s\}$. Then $b_v$ appears as summand of $b_s\pb_{sw}$ (see Remark \ref{rmk:myx_zero}). In other words, either $v = w$ and $w \in J_1$ or there is $v' \in W$ such that $\pmk_{v',sw} \neq 0$ and $v \geq_L^0 v'$. If $v'= id$, then $w=s$ and $w \in J_1$. Thus, we may assume that $v'\neq id$, and so $v \leq_J^0 v'$. Therefore, $v \sim_J^0 v'$, whence $v' \in J_1$. By induction hypothesis, we then have $sw \in J_1$. Knowing that $L(w) = \{s\}$ and using the description of the subregular cell $J_1$ (Convention \ref{not:subregular}), we deduce that $w \in J_1$.
\end{proof}

Next we will show that $J_1$ decomposes as a union of $p$-cells.

\begin{lemma}[Union of p-cells]\label{lem:J1unionpcells}
	Let $x \in J_1$ and $y \in W \setminus \{id\}$. Then for $X \in \{L,R, J\}$, if $x \geq_X^p y$, then $y \in J_1$.
	In particular, for any $y \in W \setminus \{ id\}$, either $y \in J_1$ or $y\not \sim_J^p x$.
\end{lemma}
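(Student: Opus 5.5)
The plan is to pass from the \(p\)-canonical basis to the ordinary KL basis using positivity, to use the ordinary cell order there, and then to return to \(y\) through the preceding lemma. It suffices to treat \(X=J\), since \(\geq_L^p\) and \(\geq_R^p\) both imply \(\geq_J^p\).

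\textbf{Step 1: rewrite the hypothesis in the KL basis.} Assume \(x\geq_J^p y\). By definition, \(\pb_x\) occurs as a summand of \(\pb_w\pb_y\pb_{w'}\) for some \(w,w'\in W\). I expand each \(\pb\) in the KL basis via \(\pb_z=\sum_{u}\pmk_{u,z}b_u\). All coefficients \(\pmk\), \(\mu\) and \(\pmu\) lie in \(\bbZ_{\geq 0}[v,v^{-1}]\), and \(\pmk_{x,x}=1\). The argument of \eqref{eq:pmuxyz}, applied to a triple product, then shows that the coefficient of \(b_x\) in \(\pb_w\pb_y\pb_{w'}\) is nonzero. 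This coefficient equals
\[\sum_{v,u,v'}\pmk_{v,w}\,\pmk_{u,y}\,\pmk_{v',w'}\,[b_x]\bigl(b_vb_ub_{v'}\bigr).\]
Since no cancellation can occur among nonnegative terms, some single term is nonzero. Hence there are \(v,u,v'\) with \(\pmk_{u,y}\neq 0\) such that \(b_x\) appears in \(b_vb_ub_{v'}\). In ordinary terms this says \(u\leq_J^0 x\).

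\textbf{Step 2: place \(u\) in \(J_1\).} Because \(y\neq id\) and \(\pmk_{u,y}\neq0\), Remark \ref{rmk:cantbeid} gives \(u\neq id\). The first paragraph of the proof of the preceding lemma shows that every non-identity element \(u\) satisfies \(u\geq_J^0 s\) for some \(s\in J_1\). Since \(J_1\) is an ordinary two-sided cell, this gives \(x\sim_J^0 s\leq_J^0 u\leq_J^0 x\). Therefore \(u\sim_J^0 x\), and so \(u\in J_1\).

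\textbf{Step 3: conclude.} We now have \(y\in W\setminus\{id\}\), \(u\in J_1\) and \(\pmk_{u,y}\neq 0\). The preceding lemma then gives \(y\in J_1\). For the final assertion, suppose \(y\neq id\) and \(y\sim_J^p x\). Then in particular \(x\geq_J^p y\), so \(y\in J_1\) by what was just shown.

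\textbf{Expected obstacle.} The only delicate point is Step 1. One must make sure that the positivity argument really isolates a single KL term \(b_u\) occurring in \(\pb_y\). I plan to handle this by writing out the triple-product analogue of \eqref{eq:pmuxyz} explicitly and observing that every summand is a nonnegative Laurent polynomial. Once that is in place, Steps 2 and 3 are formal consequences of the ordinary cell order and the preceding lemma.
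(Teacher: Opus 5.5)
Your proof is correct and follows essentially the paper's route. You expand in the KL basis and use positivity to extract some $u$ with $\pmk_{u,y}\neq 0$ and $u\leq_J^0 x$. You place $u$ in $J_1$ because every non-identity element lies $J$-above $J_1$, and you finish with the preceding lemma. The differences are minor: the paper proves the $R$ and $L$ cases first and chains them for $J$, whereas you work with the triple product $\pb_w\pb_y\pb_{w'}$ directly, and you use Remark \ref{rmk:cantbeid} to justify $u\neq id$, a point the paper uses without comment.
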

\begin{proof}Consider the case $X = R$, the case $X=L$ being analogous.
	Since $x \geq_R^p y$, there is $z \in W$ such that $\pmu_{z,y}^x \neq 0$. Thus, there is $v,w \in W$ such that $\pmk_{v,z}\pmk_{w,y}\mu_{v,w}^x \neq 0$ by \eqref{eq:pmuxyz}. Since $\mu_{v,w}^x \neq 0$, we have $x \geq_R^0 w$, hence $x \sim_J^0 w$. Moreover, since $\pmk_{w,y} \neq 0$, we must have $y  \in J_1$ by the previous Lemma.

	Suppose that $x \geq_J^p y$. Then there is $w \in W$ such that $x \geq_R^p w$ and $w \geq_L^p y$. Thus, $w \in J_1$ and $y \in J_1$.

	We conclude by observing that if $y \sim_J^p x$, then $x \geq_J^p y$, whence $y \in J_1$.
\end{proof}

We will now compute the $p$-cells in low rank, which will serve as a base for induction.

\begin{lemma}
	\label{lem:low-rank-input-subreg}
	Assume $p=2$ and consider Weyl groups of low rank. Then the subregular cell $J_1$ splits into $p$-cells as follows.
	\begin{enumerate}[label=\textup{(\alph*)}]
		\item If $W$ is of type $A_2$, then $J_1$ is a two-sided $p$-cell.
		\item If $W$ is of type $B_2$, then $J_1$ decomposes into two two-sided $p$-cells: $\{s_1\}$ and $J_1\setminus \{s_1\}$.
		\item If $W$ is of type $C_2$, then $J_1$ decomposes into two-sided $p$-cells: $\{s_2\}$ and $J_1\setminus \{s_2\}$.
	\end{enumerate}
\end{lemma}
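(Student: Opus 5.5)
Since $p$-cells are unions inside $J_1$ by Lemma~\ref{lem:J1unionpcells}, it suffices to determine the $p$-preorders among elements of $J_1$ alone. For this I will use that every $\pb_w$ is a summand of a product of $b_s$'s. Consequently $x\leq_L^p y$ holds if and only if $\pb_y$ can be reached from $\pb_x$ by repeatedly multiplying on the left by generators $b_s=\pb_s$ and taking a summand. I will therefore draw the directed graph with an edge $x\to y$ whenever $\pb_y$ occurs in $b_s\pb_x$, and read off the left $p$-cells as its strongly connected components. Right cells follow by applying inversion (Lemma~\ref{lem:myx_inverse}), and two-sided cells are generated by left and right cells together. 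Throughout I use the explicit $p$KL elements of Theorem~\ref{lem:pbasis_subreg} for $n=2$, which in rank two is \cite[Example 5.1]{JW1}, together with $\pb_{w_0}=b_{w_0}$ from Remark~\ref{rmk:pbw0}.

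\textbf{Type $A_2$.} Here $\pb_x=b_x$ for every $x$ (Theorem~\ref{lem:pbasis_subreg}(a), and $\pb_{w_0}=b_{w_0}$). Hence $\pmu=\mu$, the $p$-cells are the ordinary KL cells, and $J_1=\{s_1,s_2,s_1s_2,s_2s_1\}$ is a single two-sided cell.

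\textbf{Type $B_2$, $p=2$.} Here $\pb_x=b_x$ for $x\in J_1\setminus\{s_1s_2s_1\}$, and $\pb_{s_1s_2s_1}=b_{s_1s_2s_1}+b_{s_1}=b_{s_1}b_{s_2}b_{s_1}$. I will tabulate the products $b_{s}\pb_x$ for $s\in\{s_1,s_2\}$ and $x\in J_1$, using the KL formula $b_sb_x=b_{sx}+\sum_{z<x,\,sz<z}\mu(z,x)b_z$ and then re-expanding in the $p$-basis. The key entries are:
\[
b_{s_1}\pb_{s_2s_1}=\pb_{s_1s_2s_1},\qquad b_{s_2}\pb_{s_1s_2}=\pb_{s_2s_1s_2}+\pb_{s_2},\qquad b_{s_2}\pb_{s_1s_2s_1}=\pb_{w_0}+2\,\pb_{s_2s_1}.
\]
The first identity says the product $b_{s_1}\cdot b_{s_2s_1}$ is now indecomposable, so no $\pb_{s_1}$ summand splits off. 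The remaining entries are $b_{s_1}\pb_{s_2}=\pb_{s_1s_2}$, $b_{s_2}\pb_{s_1}=\pb_{s_2s_1}$, $b_{s}\pb_{x}=(v+v^{-1})\pb_x$ when $s\in L(x)$, and $b_{s_1}\pb_{s_2s_1s_2}=\pb_{w_0}+\ldots$, which I will check only contains $\pb_{w_0}$ and $\pb_{s_1s_2}$-type terms. From the table I will read off two facts. First, $\pb_{s_1}$ never occurs in $b_s\pb_x$ for $x\in J_1\setminus\{s_1\}$. Second, the left graph on $J_1\setminus\{s_1\}$ forms the cycles $\{s_2s_1,s_1s_2s_1\}$ and $\{s_2,s_1s_2,s_2s_1s_2\}$. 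Inverting gives the right graph with the same property, and combining left and right edges connects the two left cells (for instance $s_2\sim_R^p s_2s_1$). Hence $J_1\setminus\{s_1\}$ is one two-sided $p$-cell. Since $\pb_{s_1}$ is never reached from it on either side, $s_1$ is not $\geq_J^p$-above any other element, and $\{s_1\}$ is a separate two-sided $p$-cell.

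\textbf{Type $C_2$, $p=2$.} By Theorem~\ref{lem:pbasis_subreg}(c) with $a=b=2$, we have $\pb_{s_2s_1s_2}=b_{s_2s_1s_2}+b_{s_2}$. This is exactly the $B_2$ situation with $s_1$ and $s_2$ interchanged, so the same table and argument give the cells $\{s_2\}$ and $J_1\setminus\{s_2\}$.

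The main obstacle is to make sure that no $\pb_{s_1}$ (respectively $\pb_{s_2}$) summand appears in \emph{any} product. This requires every entry of the table to be computed correctly, including the $\mu$-coefficients in $B_2$ and the re-expansion of $b$'s into $\pb$'s. The delicate point is that $b_{s_1}\pb_{s_2s_1}$ equals $\pb_{s_1s_2s_1}$ with no $\pb_{s_1}$ summand, even though $b_{s_1}b_{s_2s_1}$ does contain $b_{s_1}$ in the KL basis. The same care is needed for products landing on $w_0$: there I will rely on $\pb_{w_0}=b_{w_0}$ to avoid spurious lower terms.
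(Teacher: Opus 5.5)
Your proposal is correct and follows the paper's approach. The paper's proof only cites the rank-two $p$KL elements ($\pb_{s_1s_2s_1}=b_{s_1s_2s_1}+b_{s_1}$ in $B_2$, $\pb_{s_2s_1s_2}=b_{s_2s_1s_2}+b_{s_2}$ in $C_2$) and asserts that the cell preorder follows by explicit computation; your table of products $b_s\pb_x$ carries that computation out, and the entries you list are right, e.g.\ $b_{s_2}\pb_{s_1s_2s_1}=\pb_{w_0}+2\pb_{s_2s_1}$ and $b_{s_1}\pb_{s_2s_1s_2}=\pb_{w_0}+\pb_{s_1s_2}$.

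One phrase should be tightened: $s_1$ is not $\geq_J^p$-above the elements of $J_1\setminus\{s_1\}$ (rather than ``any other element'', since $s_1\geq_J^p \mathrm{id}$), and this holds because $(J_1\setminus\{s_1\})\cup\{w_0\}$ is closed under left and right multiplication by the $b_s$.
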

\begin{proof}
	This is an explicit computation in the \(p\)-canonical basis. In type $A_2$, the $p$-canonical basis is the same as ordinary canonical basis.
	In the
	notation of Lemma~\ref{lem:pbasis_subreg}, in type \(B_2\),  the only change is
	\[
		\pb_{[1,2,1]}=b_{[1,2,1]}+b_1,
	\]
	while in type \(C_2\), the only change is
	\[
		\pb_{[2,1,2]}=b_{[2,1,2]}+b_2.
	\]
	The resulting \(p\)-cell preorder is the one stated above.
\end{proof}

%
%
The following theorem describes the subregular $p$-cells in classical types. Note that in type $A_n$, Jensen showed much more generally that $p$-cells and $0$-cells coincide, see \cite[Theorem 4.33]{J1}. We include this case for completeness and also give the proof with our methods to facilitate understanding of the other types. The notation \(a_{r,s}\) denotes an \(r\times s\) block all of whose entries are \(a\). Thus \(2_{n-1,n-1}\) is the \(n-1\times n-1\) matrix where each entry is $2$.

\begin{theorem}[The subregular \(p\)-cells]
	\label{prop:subregular-dec-pcells}
	If $p\neq 2$ or if $W$ is of type \(A_n,D_n\), the subregular $0$-cell $\J_1$ is a two-sided $p$-cell. It contains the longest element of a parabolic subgroup.

	In types \(B_n\) and \(C_n\), if $p=2$, the ordinary subregular cell \(J_1\) is the union
	of two two-sided \(p\)-cells. Order the left and right cells by
	\[
		(s_1\mid s_2,\ldots,s_n),
	\]
	Then:
	\begin{enumerate}[label=\textup{(\alph*)}]
		\item In type \(B_n\), one \(p\)-cell is strongly regular of size \(1\) and contains $s_1$,
		      and the other has shape
		      \[
			      \begin{array}{c|c}
				      1_{1,1}   & 1_{1,n-1}   \\ \hline
				      1_{n-1,1} & 2_{n-1,n-1}
			      \end{array}.
		      \]
		      It also contains the longest element of a parabolic subgroup.
		\item In type \(C_n\), one \(p\)-cell is strongly regular of size
		      \((n-1)\times(n-1)\), and the other has shape
		      \[
			      \begin{array}{c|c}
				      2_{1,1}   & 1_{1,n-1}   \\ \hline
				      1_{n-1,1} & 1_{n-1,n-1}
			      \end{array}.
		      \]
		      The strongly regular $p$-cell contains the elements of the form $[a,b]$ for $2 \leq a,b \leq n$. Both $p$-cells contain the longest element of a parabolic subgroup.
	\end{enumerate}
\end{theorem}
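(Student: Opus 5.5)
The plan is to use Lemma~\ref{lem:J1unionpcells} to reduce to the relations $\leq^p_L,\leq^p_R$ between elements of $J_1$. The tools are the left and right star operations along the simply laced part of the graph (Lemma~\ref{lem:starop_cell}), parabolic induction from rank two (Lemma~\ref{lem:order_parabolic}, together with Lemma~\ref{lem:low-rank-input-subreg} and, for $p\neq 2$, the rank-two computation of \cite[Example 5.1]{JW1}), and the explicit $p$KL elements of Theorem~\ref{lem:pbasis_subreg}.

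\textbf{Step 1: types $A_n,D_n$, and the simply laced part in general.} For a path $[a,i]$ with $a\neq i$, the left star operation with respect to $\{s_a,s_{a\pm1}\}$ (the neighbour on the path towards $i$) sends $[a,i]$ to $[a\pm1,i]$. By Lemma~\ref{lem:starop_cell}(1), $[a,i]\sim^p_L[a\pm 1,i]$. Iterating, every $[a,i]$ with fixed $i$ lies in the left $p$-cell of $s_i$. Dually, every $[i,a]$ lies in the right $p$-cell of $s_i$. Since $[i,j]$ lies both in the right $p$-cell of $s_i$ and in the left $p$-cell of $s_j$, all simple reflections, and hence all of $J_1$, lie in one two-sided $p$-cell. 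This settles types $A_n$ and $D_n$ for every $p$. In types $B_n,C_n$ the same argument works on $s_2,\dots,s_n$, and also moves $[a,1,b]$ to $[a\pm1,1,b]$ and $[a,1,b\pm1]$ when $a,b\geq 3$. Hence the paths $[a,b]$ with $a,b\geq 2$ and all $[a,1,b]$ fall into at most a few $p$-cells, indexed by what happens at the exceptional edge.

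\textbf{Step 2: the exceptional edge.} The behaviour there is read off in rank two and transported to rank $n$ via Lemma~\ref{lem:order_parabolic} applied with $I=\{s_1,s_2\}$.
\begin{itemize}
\item For $p\neq 2$, the rank-two computation of \cite[Example 5.1]{JW1} shows that $J_1$ of $B_2/C_2$ is one two-sided $p$-cell. Inducing then connects $s_1$ with $s_2$, so $J_1$ is a single two-sided $p$-cell.
\item For $p=2$, Lemma~\ref{lem:low-rank-input-subreg} gives the following relations in rank two.
 \begin{itemize}
 \item Type $B_2$: $s_2\sim^p s_1s_2\sim^p s_2s_1\sim^p [2,1,2]\sim^p [1,2,1]$, with $s_1$ isolated.
 \item Type $C_2$: $s_1\sim^p [1,2,1]\sim^p [1,2]\sim^p[2,1]\sim^p [2,1,2]$, with $s_2$ isolated.
 \end{itemize}
\end{itemize}
After induction and the star moves of Step~1, these give the upper bounds on connectivity. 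In $B_n$, everything except $s_1$ lies in one cell. In $C_n$, the elements $[a,b]$ with $a,b\geq 2$ form one cell, and the elements $[1,b]$, $[a,1]$, $[a,1,b]$ and $[1,2,1]$ form another.

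\textbf{Step 3 (main obstacle): separation.} The hard part is proving that these cells do not merge further. For $B_n$, $p=2$, we must show that $s_1$ is not $\sim^p_J$-equivalent to $[1,2,1]$. For $C_n$, $p=2$, we must show that no $[a,b]$ with $a,b\geq 2$ is $\sim^p_J$ to $[a,1,b]$ or to the paths through $s_1$. My first attempt is the descent-set constraint $x\leq^p_L y\Rightarrow R(x)\subseteq R(y)$, combined with the observation that the rank-two non-relation can be pulled back. Suppose a relation in rank $n$ joined the two candidate cells. Using star operations, which preserve $\leq^p_R$ by Lemma~\ref{lem:starop_cell}(2), we would move both elements into $W_{\{s_1,s_2\}}$-cosets of the form $wW_I$ with $w\in W^I$. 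The reverse direction of Lemma~\ref{lem:order_parabolic} would then produce a forbidden relation in $B_2$ or $C_2$.

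If that transport is not clean, the fallback is a direct check. Using \eqref{eq:pmuxyz} and Theorem~\ref{lem:pbasis_subreg}, I would verify that the coefficient of $\pb_{s_1}$ (in $B_n$), respectively of $\pb_{[a,b]}$ with $a,b\ge2$ (in $C_n$), in $\pb_{s}\pb_x$ vanishes for $x$ in the other candidate cell. The point is that subtracting the extra terms $b_1$, respectively $b_{[a,k,b]}$, cancels exactly the ordinary contributions that would have linked the cells.

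\textbf{Step 4: shapes and parabolic longest elements.} Once the cells are known, the shapes follow by intersecting them with the ordinary left cells $L_{s_i}$ and right cells $R_{s_i}$ from Convention~\ref{not:subregular}, and counting.
\begin{itemize}
\item Type $B_n$: the large cell has $L_{s_1}\cap R_{s_1}=\{[1,2,1]\}$, $L_{s_i}\cap R_{s_i}=\{s_i,[i,1,i]\}$ for $i\geq 2$, and singletons $\{[i,j]\}\cup\{[i,1,j]\}$-type intersections off the diagonal, one per block entry. This gives the stated block matrix.
\item Type $C_n$: the elements $[a,b]$ with $a,b\geq 2$ give an $(n-1)\times(n-1)$ strongly regular cell. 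The other cell has $L_{s_1}\cap R_{s_1}=\{s_1,[1,2,1]\}$ and singletons elsewhere.
\end{itemize}
The longest parabolic elements contained in the cells are simple reflections: $s_2$ in the large $B_n$ cell, $s_2$ in the strongly regular $C_n$ cell, and $s_1$ in the other $C_n$ cell.
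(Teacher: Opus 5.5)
Your overall architecture matches the paper's: Lemma~\ref{lem:J1unionpcells} keeps you inside $J_1$, star operations run along the simply laced part, rank-two input is moved up by parabolic comparison, and the shapes come from counting. Step~1 is in fact more direct than the paper's type $A$/$D$ argument. Lemma~\ref{lem:starop_cell}(1), applied to the left star operation for $\{s_a,s_{a'}\}$, gives $[a,i]\sim^p_L[a',i]$ at once, with no $A_2$ computation. So Steps~1--2 settle types $A_n$, $D_n$ and the case $p\neq 2$. For $p=2$ they only show that each stated set lies in one two-sided $p$-cell. The genuine gap is Step~3: separation is exactly the content of the $p=2$ claims, and you leave it as two unexecuted options.

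Your first option needs four things, none of which is in the proposal.
\begin{enumerate}
\item[(a)] A merger is a chain, not a single relation. $\leq^p_J$ is generated by one-step relations, where $\pb_y$ is a summand of $b_s\pb_x$ or of $\pb_x b_s$ with $s\in S$; this holds because the $\pmu$ are positive and $\pb_w$ is a summand of $b_{\underline{w}}$. The lower candidate already lies below the upper one: $s_1\leq^p_R[1,2]$ in $B_n$, and $s_2\leq^p_R[2,1]$ in $C_n$. Hence every element of a chain witnessing $x\leq^p_J y$, with $x$ upper and $y$ lower, is $\sim^p_J x$. By Lemma~\ref{lem:J1unionpcells} it lies in $J_1$, so some single step crosses from upper to lower.
\item[(b)] Descent sets and inversion pin the crossing step down. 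In $B_n$ it is $[a,1]\leq^p_L s_1$ or $[1,2,1]\leq^p_L s_1$. In $C_n$ it is $[1,b]\leq^p_L[a',b]$ or $[a,1,b]\leq^p_L[a',b]$, with $a',b\geq 2$.
\item[(c)] Right star operations preserve $\leq^p_L$, and left star operations stay inside left $p$-cells (Lemma~\ref{lem:starop_cell}). Using them, the crossing becomes $[2,1]\leq^p_L s_1$ or $[1,2,1]\leq^p_L s_1$ in $B_n$, and $[1,2]\leq^p_L s_2$ or $[2,1,2]\leq^p_L s_2$ in $C_n$. All of these lie inside $W_{\{s_1,s_2\}}$ itself.
\item[(d)] You then need that a relation in $W$ between elements of $W_{\{s_1,s_2\}}$ already holds in $W_{\{s_1,s_2\}}$. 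Only then does Lemma~\ref{lem:low-rank-input-subreg} give a contradiction.
\end{enumerate}
Landing in an arbitrary coset $wW_I$ and invoking the reverse direction of Lemma~\ref{lem:order_parabolic} does not achieve (d). As quoted, that direction needs the relation for all $w\in W^I$, and a single coset genuinely does not suffice. For example, take $A_2$ with $I=\{s_1\}$: $b_{s_2s_1}b_{s_2}=b_{s_2s_1s_2}+b_{s_2}$ gives $s_2s_1\leq^p_R s_2$, yet $s_1\not\leq^p_R e$ in $W_I$. Steps (c)--(d) are what the paper's ``iff'' reductions to the pairs $(s_2,[2,1,2])$, $([2,1],[2,1,2])$ and $([1,2],[2,1,2])$ amount to.

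Your fallback direct check also needs (a), and it must be carried out for every crossing pair. It does hold in examples: $b_{s_1}b_{[2,1]}=\pb_{[1,2,1]}$ in $B_n$, and $b_{s_2}b_{[1,b]}=\pb_{[2,1,b]}$ in $C_n$.

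There are two smaller problems in Step~4.
\begin{itemize}
\item In the large $B_n$ cell, every entry with $i,j\geq 2$, including $i\neq j$, is $\{[i,j],[i,1,j]\}$ and so has two elements. The singletons are only $[1,2,1]$, $[1,j]$ and $[i,1]$, in the $s_1$ row and column.
\item Reading the shapes off the ordinary cells presupposes that each nonempty intersection of a two-sided $p$-cell with an ordinary left cell is a single left $p$-cell. That requires rank-two left relations, such as $[1,2]\sim^p_L s_2$ in $B_2$ and $[1,2]\sim^p_L[2,1,2]$ in $C_2$, transported by right star operations. The two-sided statements of Lemma~\ref{lem:low-rank-input-subreg} alone are not enough.
\end{itemize}
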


\begin{proof}
	We use the known relations for Weyl groups of rank $2$ and $3$ and propagate them using star operations, concluding by a counting argument.

		{\bf Type $A_n$.}
	Assume that $W$ is of type $A_n$.

	We first show that the left $0$-cell containing $1$ does not split, i.e. for all $1 \leq a \leq n$, $$[a,1] \sim_L^p s_1.$$ If $a=2$, then it is Lemma \ref{lem:low-rank-input-subreg} applied to $W_{\{s_{1},s_{2}\}}$. By transitivity of the left cell order, it is enough to show that, for any $2 < a \leq n$, we have $[a,1] \sim_L^p [a-1,1]$. By applying right star operations successively, each time using Lemma \ref{lem:starop_cell} and \cite[Theorem 3.9]{J1}, we have \[[a,1] \sim_L^p [a-1,1] \iff [a,a-1] \sim_L^p [a,a],\] and the right-hand side of the equivalence holds by Lemma \ref{lem:low-rank-input-subreg} applied to $W_{\{s_{a},s_{a-1}\}}$.

	Thanks again to Lemma \ref{lem:starop_cell}, we thus conclude that each left $0$-cell is left unchanged. Thus, the whole subregular cell $J_1$ is a two-sided $p$-cell.

	{\bf Type $D_n$.}
	Assume that $W$ is of type $D_n$ for $n \geq 4$. Restricting to $W_{\{s_2,s_3,\dots,s_n\}}$, we see that all shortest paths of the form $[a,b]$ for $2 \leq a,b \leq n$ lie in the same two-sided $p$-cell. On the other hand, restricting to  $W_{\{s_1,s_3,\dots,s_n\}}$, we see that for all $3 \leq a \leq n$ \[s_1 \sim^p_L [a,1] \sim_R^p s_a \sim_L^p [1,a].\]
	Thus, the whole subregular cell $J_1$ is a two-sided $p$-cell.

	{\bf Types $B_n$ and $C_n$.}
	Assume now that $W$ is of type $B_n$ or $C_n$. By restricting to $W_{\{s_2, \dots, s_n\}}$, we observe that all the shortest paths of the form $[a,b]$ for $2 \leq a,b \leq n$ lie in the same two-sided $p$-cell. Moreover, by \cite[Theorem 3.9]{J1}, if $x \leq_R^p y \in W_I$, then for any $w \in W^I$, $wx \leq_R^p wy$. In particular, for $2\leq a, b,c\leq n$, since $[2,b] \sim_R^p [2,c]$, we must have $[a,1,b] \sim_R^p [a,1,c]$ and $[b,1,a] = [a,1,b]^{-1} \sim_L^p [c,1,a]$. Therefore, all the elements of the form $[a,1,b]$ for $2 \leq a,b \leq n$ lie in the same two-sided $p$-cell.

	Using Lemma \ref{lem:starop_cell} again, we observe the following, for $2 \leq a,b \leq n$:
	\begin{itemize}
		\item  $[2,2] \sim_L^p [2,1,2] \iff[2,b]\sim_L^p [2,1,b]$,
		\item $ [2,2] \sim_L^p [2,1,2]  \iff [a,2]\sim_L^p [a,1,2],$
		\item $[2,1,2] \sim_J^p [2,1] \iff [a,1,2]\sim_J^p [a,1]$
		\item and $ [2,1,2] \sim_J^p [1,2] \iff [2,1,b]\sim_J^p [1,b].$
	\end{itemize}

	Thus, by Lemma \ref{lem:low-rank-input-subreg}, in type $B_n$, when $p=2$, the two-sided $p$-cell containing $s_2$ contains every element of $J_1$ except $\{s_1\}$.
	In type $C_n$, when $p=2$, the two-sided $p$-cell containing $s_2$ contains exactly the shortest paths of the form $[a,b]$ for $2 \leq a,b \leq n$. Moreover, the two-sided $p$-cell containing $[2,1,2]$ consists exactly of the remaining elements of $J_1$.
\end{proof}

\section{The submaximal cell in classical type}

Following the strategy of the previous section, we will now describe the $p$-cells in the submaximal cell.

\begin{conv}[The submaximal cell]\label{not:submaximal}
	Let \(W\) be of classical type. We write \[
		\Jtop=w_0J_1
	\]
	for the \textbf{submaximal} two-sided cell, i.e. the cell closest to \(w_0\). In general, for an element $w \in W$, we write $$\overline{w} := w_0w.$$


	The shape of the ordinary cell \(\Jtop\) is the same as the shape of
	\(J_1\). Ordered by
	\[
		(s_1\mid s_2,\dots,s_{n}),
	\]
	i.e. the exceptional endpoint first and the simply-laced tail second, it is
	\[
		\begin{array}{cc}
			2_{1,1}   & 1_{1,n-1}    \\[1mm]
			1_{n-1,1} & 2_{n-1,n-1}.
		\end{array}
	\]
\end{conv}

\begin{remark}
	The notation is chosen so that the formulae are precisely the
	\(w_0\)-translates of the notation in the subregular cell. In particular,
	we do not use any new combinatorics for the ordinary cell shape.
\end{remark}

\subsection{The $p$-Kazhdan--Lusztig elements}

We first consider how the $p$-canonical basis elements in $\Jtop$ could possibly decompose.

The first step is to reduce the possibilities of the $b_y$ appearing as coefficients in the $\pb_x$ for $x \in \Jtop$. In particular, we show that $y \in \Jtop$ unless $W$ is of type $B_n$ or $C_n$ and $x$ is of the form $w_0s_1$ or $w_0s_1s_2s_1$. To do so, we extensively use the fact that the left and right descent of $y$ are completely determined by the one of $x$. We then proceed by induction repeatedly using the star operation.

\begin{lemma}\label{lem:possiblemyx_submin} Let $y \in W$ such that $\pmk_{y,x} \neq 0$ for some $x \in \Jtop$. If $y \not\in \Jtop$, then $W$ is of type $B_n$ or $C_n$ and $y = \overline{v}$ where $v$ is of the form $s_1s_2s_1[3,1]\dots[k,1]$ for $3\leq k \leq n$.
\end{lemma}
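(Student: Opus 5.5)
Writing \(x=\overline{u}=w_0u\) with \(u\in J_1\) and \(y=\overline{v}=w_0v\), I would translate everything about \(y\) into conditions on \(v\) and use descent sets only, in the spirit of the section's introduction. By Lemma \ref{lem:myx_descentset}, \(\pmk_{y,x}\neq 0\) forces \(y\le x\), \(L(x)\subseteq L(y)\) and \(R(x)\subseteq R(y)\). Lemma \ref{lem:w0DescentSet} gives \(R(w_0u)=S\setminus R(u)\) and \(L(w_0u)=S\setminus w_0L(u)w_0\). The conjugation by \(w_0\) is trivial in types \(B_n,C_n\) (as \(w_0\) is central) and is a diagram automorphism in types \(A_n,D_n\). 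Hence the conditions become \(R(v)\subseteq R(u)\), \(L(v)\subseteq L(u)\) and \(v\ge u\) in Bruhat order, because left multiplication by \(w_0\) reverses Bruhat order. Since \(u\in J_1\) has \(|L(u)|=|R(u)|=1\), and \(v\ne id\) by Remark \ref{rmk:pbw0} unless \(x=w_0\), we get \(L(v)=L(u)=\{s_i\}\) and \(R(v)=R(u)=\{s_j\}\). So the lemma reduces to classifying the elements \(v\ge u\) with a single left descent \(s_i\) and a single right descent \(s_j\).

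The key combinatorial step is to describe all elements of \(W\) with exactly one left and one right descent, and to show they lie in \(J_1\) except in the listed cases. An element with \(L(v)=\{s_i\}\) is bigrassmannian when \(R(v)\) is also a singleton, so I would use the known description of bigrassmannian elements in classical types. In types \(A_n\) and \(D_n\), I expect these beyond the subregular paths to be ruled out by a direct Bruhat comparison combined with the requirement \(v\ge u\). A cleaner route, however, is an induction on rank using the parabolic factorisation \cite[Proposition 2.5.1]{BB1}, in the style of Lemma \ref{lem:techlem1} and Lemma \ref{lem:subexpressionJ1}.

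In this induction I would write \(v=v'v_I\) with \(v_I\) in a maximal parabolic subgroup and apply the single-descent constraint to \(v'\) and \(v_I\). I would use star operations (Lemma \ref{lem:myx_starop} together with Lemma \ref{lem:starop_w0}) to move the descent from \(s_i\) to an endpoint of the Coxeter graph, which reduces the argument to \(u\) of the form \([a,1]\) or \([1,b]\). There the surviving candidates are the elements whose reduced words repeatedly sweep down to \(s_1\), namely \(s_1s_2s_1[3,1]\cdots[k,1]\). Because \([i,1,i][j,1,j]\) commute and \(w_0=s_1s_2s_1s_2[3,1,3]\cdots[n,1,n]\), these elements are exactly the \(B/C\)-specific bigrassmannian elements with both descents equal to \(\{s_1\}\), and they do not occur in the simply laced types.

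I expect the main obstacle to be the exhaustive bigrassmannian classification in types \(B_n\) and \(C_n\), in particular excluding elements such as \([a,1,b]\)-type words of greater length, for example \(s_2s_1s_2s_1\)-like ones. I would attempt this first by explicit inclusion in type \(B_2\) and \(B_3\), and then extend to higher rank by the inductive parabolic argument, checking at each step that \(v\ge u\) and the descent conditions leave only the stated family.
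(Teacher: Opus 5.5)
Your translation to $v=w_0y$ is correct. Lemmas~\ref{lem:myx_descentset} and~\ref{lem:w0DescentSet} give $L(v)=L(u)$ and $R(v)=R(u)$, both singletons, together with $v\ge u$. The gap is in the next step, where you claim these conditions force $v\in J_1$ outside the listed family. They do not, so neither a classification of bigrassmannian elements nor a Bruhat comparison can finish the proof.

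In type $A_3$, the element $v=s_2s_1s_3s_2$ has $L(v)=R(v)=\{s_2\}$ and $v\ge s_2$, but $v\notin J_1$. So $x=w_0s_2$ and $y=w_0v$ pass every test you impose.

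The same failure occurs after your reduction to an endpoint:
\begin{enumerate}
\item In $B_3$, $v=s_2s_1s_3s_2s_1$ has $L(v)=\{s_2\}$, $R(v)=\{s_1\}$ and $v\ge[2,1]$. Yet it is neither in $J_1$ nor of the form $s_1s_2s_1[3,1]\cdots[k,1]$.
\item In $D_n$, $s_n\cdots s_3s_1s_2s_3\cdots s_n$ has both descent sets equal to $\{s_n\}$.
\item In $D_4$, $s_1s_3s_2s_4s_3s_1$ has both descent sets equal to $\{s_1\}$.
\end{enumerate}
So it is false that nothing extra survives in simply laced type.

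There are two further problems. Lemma~\ref{lem:myx_starop} requires $sts=tst$, so in types $B_n,C_n$ a descent at $s_j$ with $j\ge 2$ cannot be moved across the double edge. Your reduction to $u=[a,1]$ or $[1,b]$ is therefore unavailable there. Also, the fact that the $[i,1,i]$ commute does not show that no other element has $L=R=\{s_1\}$. The paper gets exhaustiveness from $|W_I\backslash W/W_I|=n+1$ for $I=S\setminus\{s_1\}$.

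The missing idea is to use $\pmk_{y^*,x^*}=\pmk_{y,x}\neq 0$ for more than relocating descents. You should re-apply Lemma~\ref{lem:myx_descentset} to every pair along a chain of star operations. These conditions are strictly stronger than those on $(x,y)$ alone. In the $A_3$ example, the right star with respect to $\{s_2,s_3\}$ gives $x^*=w_0s_2s_3$ and $y^*=w_0s_2s_1s_3$. Then $R(x^*)=\{s_1,s_2\}\not\subseteq\{s_2\}=R(y^*)$, hence $\pmk_{y,x}=0$. The paper's Cases 1 and 2 are an induction organising exactly such moves.

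Your endpoint strategy can be rescued by choosing the targets differently. Along any chain, $L(y^*)=L(x^*)$ and $R(y^*)=R(x^*)$, since $x^*\in\Jtop$ has descent sets of size $n-1$ and $y^*\neq w_0$.
\begin{enumerate}
\item In type $A_n$, pushing one descent to $s_1$ already suffices.
\item In types $B_n,C_n$, push a descent at $s_j$ with $j\ge 2$ to $s_n$. Every non-identity element with right descent set $\{s_n\}$ is some $[a,n]$ or $[a,1,n]$, so it lies in $J_1$.
\item In type $D_n$, push one descent to a fork leaf and the other to $s_n$. This forces $\overline{y^*}$ to be $[1,n]$ or $[2,n]$, up to inversion.
\end{enumerate}
Lemma~\ref{lem:starop_cell} (with $p=0$) then carries $y^*\in\Jtop$ back to $y\in\Jtop$. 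The only remaining case is $L(\overline{y})=R(\overline{y})=\{s_1\}$ in types $B_n,C_n$, which the counting argument settles.
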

\begin{proof} Let $y \in W$ and $x \in \Jtop$ such that $\pmk_{y,x} \neq 0$. Since $y \leq x$ in the Bruhat order, we must have $y \neq w_0$.

	By definition of $\Jtop$ and Lemma \ref{lem:myx_descentset}, we have $L(x)=L(y)$ and $R(x)=R(y)$ with both sets of size $n-1$. Let $1 \leq a, b\leq n$ such that $s_a \not \in L(y)$ and $s_b\not\in R(y)$. In other words, $L(\overline{y})= \{w_0s_aw_0\}$ and $R(\overline{y})= \{s_b\}$. There exists $z \in W$ such that $\overline{y} = zs_b$ and $l(\overline{y}) = l(z)+1$. Moreover, if $s\in R(z)$, then $s \in R(\overline{y})$ unless $ss_b \neq s_bs$, whence $R(z) \subseteq \{s_{b+1},s_{b-1}\}$ (if $1 \leq b+1, b-1 \leq n$).

	We will repeatedly use the following fact, a consequence of Lemma \ref{lem:starop_cell}: \textit{if we apply a (left/right) star operation to $x$ then we obtain a new element $x^*$ which also belongs to $\Jtop$.}

	\textbf{Case 0} If $R(z) = \emptyset$, then $y = \overline{s_b} \in \Jtop$.

	\textbf{Case 1} Assume that $b+1 \in R(z)$. If $W$ is of type $B_n$ or $C_n$, assume furthermore that $b \geq 2$.
	We use induction on the length of $\overline{y}$ to show that $y \in \Jtop$.

	If $l(\overline{y}) = 2$, then $\overline{y} = s_{b+1}s_{b} \in J_1$ and $y \in \Jtop$.

	Now assume that $l(\overline{y}) =l>2$. Assume the following induction hypothesis: let $y' \in W$ with $l(\overline{y'}) <l$ such that $\pmk_{y',x'} \neq 0$ for some $x'\in \Jtop$. Let $1 \leq c \leq n$ ($c\geq 2$ if $W$ is of type $B_n$ or $C_n$) with $R(x') = S\backslash \{s_c\}$. If $y' = z's_c$ for some $z' \in W$ with $s_{c+1} \in R(z')$ and $l(y') = l(z')+1$, then $y' \in \Jtop$.

	We apply the right star operation to $y$ and $x$ with respect to $\{s_b,s_{b+1}\}$ (or $\{s_1,s_3\}$ if $b=1$ and $W$ is of type $D$) to obtain $y^*$ and $x^* \in \Jtop$. By definition of the star operation and since $x^* \in \Jtop$, we have $R(x^*) = S \backslash \{s_{b+1}\}$. By Lemma \ref{lem:myx_starop}, $\pmk_{y^*,x^*} = \pmk_{y,x} \neq 0$. Therefore $R(y^*) = S \backslash \{s_{b+1}\}$ and $R(\overline{y^*}) = \{s_{b+1}\}$. Hence, there is $z'\in W$ such that $\overline{y^*} = z's_{b+1}$, $l(\overline{y^*}) = l(z')+1$ and $R(z') \subseteq \{s_{b+2},s_b\}$. Since $l>2$, $R(z') \neq \emptyset$. On the other hand, $s_{b} \not \in R(z')$ since otherwise $s_{b+1} \in R(y)$. Thus, $s_{b+2} \in R(z')$. By the induction hypothesis, $y^*=\overline{z} \in \Jtop$ whence $y \in \Jtop$ by Lemma \ref{lem:starop_cell}.

	\textbf{Case 2} Suppose now that $s_{b-1} \in R(z)$. In type $B_n$ and $C_n$, assume furthermore that $b\geq 2$. We check that $y \in \Jtop$.

	\textbf{Step 1} We show that either $y \in \Jtop$ or $W$ is of type $B_n$ or $C_n$ and $\overline{y}=w[1,b]$ for $w \in W$ with $l(\overline{y})=l([1,b])+l(w)$.

	If $W$ is of type $B_n$ or $C_n$ and $b=2$, then since $s_1 \in R(z)$, $\overline{y}=w[1,2]$ for $w \in W$ with $l(\overline{y})=2+l(w)$. Thus, if $W$ is of type $B_n$ or $C_n$, we may assume that $b \geq 3$.

	Let us now proceed with an inductive proof. If $l(\overline{y}) = 2$, then $\overline{y} = s_{b-1}s_{b}$ and $y \in \Jtop$.

	Now assume that $l(\overline{y}) =l >2$. We apply the right star operation to $y$ and $x$ with respect to $\{s_b,s_{b-1}\}$ to get $y^*$ and $x^* \in \Jtop$. Note that $\overline{y}^*  = z$ and $ y^*= w_0z$ by Lemma \ref{lem:starop_w0}. Thus, $l(\overline{y^*}) =l-1<l$. By Lemma \ref{lem:myx_starop}, $\pmk_{y^*,x^*} = \pmk_{y,x} \neq 0$. In particular, $R(\overline{y}^*) = \{s_{b-1}\}$ and there is $z'\in W$ such that $\overline{y^*} = z's_{b-1}$ and $l(\overline{y^*}) = l(z')+1 = l-1$. As before, $R(z')\subseteq \{s_b,s_{b-2}\}$ and $R(z') \neq \emptyset$ since $l(z') = l(\overline{y})-2 \geq1$. If $s_{b} \in R(z')$, then $s_{b-1} \in R(\overline{y})$, a contradiction. Thus, $R(z') = \{s_{b-2}\}$. If $b-1 =2$, then $\overline{y} = ws_1s_2s_3$ for some $w \in W$ with $l(\overline{y})= 3 +l(w)$. If $b-1>2$, then by induction hypothesis, either $y^* \in \Jtop$, whence $y \in \Jtop$ or $\overline{y^*}=w[1,b-1]$ for $w \in W$ with $l(\overline{y^*})=l([b-1,1])+l(w)$, whence $\overline{y} = w[1,b]$.

	\textbf{Step 2} Assume that $W$ is of type $B_n$ or $C_n$ and that $\overline{y} = w[1,b]$ for some $w \in W$ with $l(\overline{y})=l(w)+l([1,b])$ for some $b >1$. We show that $y \in \Jtop$.

	If $w =id$, then $\overline{y} = [1,b]$ and $y\in \Jtop$. So we may assume that $w \neq id$. We will prove that $\overline{y} = [a,1,b]$ for some $1 \leq a \leq n$.

	By repeatedly applying star operations as in \textbf{Step 1}, we deduce that there exists $x'\in \Jtop$ such that $\pmk_{\overline{ws_1s_2},x'} \neq 0$. Let $y'=\overline{ws_1s_2}$. If $s_3 \in R(w)$, then by \textbf{Case 1}, $y'\in \Jtop$, whence $w$ is of the form $[a,2]$ for some $2 \leq a \leq n$, a contradiction. Thus, $s_3 \not \in R(w)$.

	We will show that for any $l(\overline{y'}) -2 \geq j \geq 1$, there is $w_{j}\in W$ such that $\overline{y'} = w_{j}[j,1,2]$ (where $[1,1,2]$ denotes $[1,2]$) with $l(\overline{y'})=j+l(w_{j})+1$ and $R(w_j) \subseteq \{s_{j+1}\}$.

	If $j=1$, we set $w_1 = w$. We need to check that $R(w)= \{s_2\}$. By assumption, $R(w) \neq \emptyset$. Since $\pmk_{y',x'} \neq 0$ and $x'\in \Jtop$, we know that $|R(y')| = n-1$ and $R(\overline{y'}) = \{s_2\}$. Thus $R(w) \subseteq \{s_3,s_2\}$, and since we have shown that $s_3 \not \in R(w)$, we conclude that $R(w) = \{s_2\}$.

	Therefore, there exists $w_2 \in W$ such that $w =w_2s_2$ and $l(w)=l(w_2)+1$. Since $R(\overline{y'}) = \{s_2\}$, we must have $R(w_2) \subseteq \{s_3\}$.
	Assume now that there is $w_k\in W$ such that $\overline{y'} = w_k[k,1,2]$ and $R(w_{k}) \subseteq \{s_{k+1}\}$ for $2\leq k<l(\overline{y'}) -2$. Since $k < l(\overline{y'}) -2$, we have $w_k \neq id$ and $R(w_k) = \{s_{k+1}\}$. Thus, there is $w_{k+1}\in W$ such that $w_{k} = w_{k+1}s_{k+1}$ and $l(w_{k})=l(w_{k+1})+1$. Since $R(w_{k}) = \{s_{k+1}\}$, we see that $R(w_{k+1})\subseteq \{s_{k},s_{k+2}\}.$ If $R(w_{k+1}) = \emptyset$, then $\overline{y'} = [k+1,1,2]$. If $s_k \in R(w_{k+1})$, then we can write $\overline{y'} = w's_{k}s_{k+1}s_{k}[k-1,1,2] = w's_{k+1}[k,1,2]s_{k+1}$ for some $w'\in W$ with $l(\overline{y'}) =l(w')+k+3$, contradicting $R(\overline{y'}) = \{s_2\}$. Thus $R(w_{k+1})\subseteq \{s_{k+2}\}.$
	We have shown that $\overline{ws_1s_2} = [a,1,2] \in \Jtop$. In particular $w=[a,2]$, whence $\overline{y} =[a,1,b] \in J_1$ and $y \in \Jtop$.



	\textbf{Case 3:} Assume that $W$ is of type $B_n$ or $C_n$ and that $b=1$. In this case, $L(\overline{y})= \{w_0s_aw_0\} =\{s_a\}$. If $a\neq 1$, a similar analysis on the left implies that $y \in \Jtop$. Thus, we may now assume that $b=a=1$.

	Observe first that if $\overline{y} = s_1s_2s_1[3,1] \dots [k,1]$ then $L(\overline{y})=R(\overline{y}) = \{s_1\}$. We show by a counting argument that there is no other $w \in W$ which satisfies $L(w)=R(w)=\{s_1\}$. We set
	\[X := \{w\in W \mid L(w)=R(w)\subseteq \{s_1\}\}.\]
	The set $X$ is in bijection with the set of double cosets $W_I\backslash W /W_I$ for $I = S\backslash \{s_1\}$, see \cite[Exercise 2.15]{BB1}. The number of these double cosets is equal to $\langle \mathrm{Ind}_{W_I}^W 1_{W_I}, \mathrm{Ind}_{W_I}^W 1_{W_I} \rangle$ \cite[Exercise 7.77a]{St}. Since $ \mathrm{Ind}_{W_I}^W 1_{W_I}$ decomposes into a sum of $n+1$ irreducible representations, each appearing with coefficient~$1$ (see \cite[Proposition 6.15]{GP}), we conclude that $|X| = n+1$. Note that the identity belongs to $X$ and is the only element with empty left or right descent set. Thus, there are exactly $n$ elements with left and right descent sets equal to~$\{s_1\}$, the ones that we have enumerated above.
\end{proof}

We are now ready to study the decomposition of the $\pb_x$ for $x \in \Jtop$.

\begin{theorem}[$p$-canonical basis for the submaximal cell]\label{lem:pbasis_submin}
	Let $x \in \Jtop$.
	\begin{enumerate}
		\item In types $A_n$ and $D_n$, we have $b_x=\pb_x$ for any prime $p$.
		\item In type $B_n$, if $p\neq 2$, then $\pb_x = b_x$. If $p=2$, then $\pb_x= b_x$ unless $x = w_0[i,j]$ for $i,j \geq 2$. In this case,
		      \[\pb_{w_0[i,j]} = b_{w_0[i,j]} + b_{w_0[i,1,j]}.\]
		\item In type $C_n$, if $p\neq 2$, then $\pb_x = b_x$. If $p=2$, then $\pb_x= b_x$ unless $x = w_0s_1$ or $x=w_0s_1s_2s_1$. If $x = w_0s_1$, then \[\pb_{w_0s_1} = b_{w_0s_1}+b_{w_0s_1s_2s_1}.\]
	\end{enumerate}
\end{theorem}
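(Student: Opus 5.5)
Throughout, $x\in\Jtop$ and $y$ ranges over elements with $\pmk_{y,x}\neq 0$. By Lemma~\ref{lem:myx_descentset}, $y\le x$, $L(y)=L(x)$ and $R(y)=R(x)$, both of size $n-1$. By Lemma~\ref{lem:possiblemyx_submin}, either $y\in\Jtop$, or $W$ is of type $B_n$/$C_n$ and $y=\overline{v}$ with $v=s_1s_2s_1[3,1]\cdots[k,1]$. We write $y=\overline{v}$ and $x=\overline{u}$ with $u\in J_1$. Then $y\le x$ if and only if $u\le v$, and the descent sets of $v$ and $u$ agree by Lemma~\ref{lem:w0DescentSet}. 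So only finitely many candidate pairs $(u,v)$ exist, and they are combinatorially explicit. The plan mirrors Theorem~\ref{lem:pbasis_subreg}: treat path elements by star operations, handle the few elements near the exceptional edge by rank-two/three input and local intersection forms, and propagate with inversion (Lemma~\ref{lem:myx_inverse}) and right/left star operations, which preserve $\Jtop$ and commute with $w_0$ (Lemmas~\ref{lem:starop_w0} and~\ref{lem:myx_starop}).

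\textbf{Types $A_n$, $D_n$.} For $x=\overline{[a,b]}$, the candidates are $y=\overline{[c,d]}\in\Jtop$ with the same descents, so $c=a$ and $d=b$ (in type $A$, $w_0$ conjugation only permutes labels). This forces $y=x$. No intersection-form computation is needed, since uniqueness of an element of $J_1$ with given left and right descents already gives $\pb_x=b_x$.

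\textbf{Types $B_n$, $C_n$.} Here $w_0$ is central, so $y=\overline{v}$ with $L(v)=L(u)$, $R(v)=R(u)$ and $u\le v$. By the convention on $J_1$, for $u=[a,b]$ with $a,b\ge2$ the only other element of $J_1$ with the same descents is $v=[a,1,b]$, and $u<v$. For $u=[a,1,b]$ there is no larger one. For $u=s_1$ the candidates are $s_1s_2s_1$ and the elements $s_1s_2s_1[3,1]\cdots[k,1]$. For $u=s_1s_2s_1$ they are the latter alone. For $u=[1,b]$ or $[a,1]$ the element is unique. Thus the theorem reduces to computing $\pmk_{\overline{[a,1,b]},\overline{[a,b]}}$ and the coefficients attached to $\overline{s_1}$ and $\overline{s_1s_2s_1}$.

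For the first family, repeated star operations with respect to $\{s_a,s_{a-1}\}$ and $\{s_b,s_{b-1}\}$ reduce $(a,b)$ to $(2,2)$; they act compatibly on $[a,b]$ and $[a,1,b]$, as in Case~1 of Theorem~\ref{lem:pbasis_subreg}. This reduces everything to $\pmk_{\overline{s_2s_1s_2},\overline{s_2}}$ in $W$ of rank $n$. To compute it I would reduce further to rank two via a parabolic/induction argument. Concretely, I would compute $h_{y,x}$ and $\ph_{y,x}$ through the local intersection form for a reduced expression of $\overline{s_2}$, mimicking Lemma~\ref{lem:pbx_n1n}. I expect a single degree-zero entry equal to $\pm2^{m}$ in type $B$, giving coefficient $1$ exactly when $p=2$, and a unit in type $C$. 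The intermediate sum over $y<z<x$ vanishes because no admissible $z$ exists by the descent analysis. The base case $n=2$ is \cite[Example 5.1]{JW1}.

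\textbf{Exceptional-end elements.} For $x=\overline{s_1}$ in type $C$, the candidate $y=\overline{s_1s_2s_1}$ should again be computed by a local intersection form. Alternatively, I would apply Remark~\ref{rmk:myx_zero} to $x=s\cdot(sx)$ with $sx\in\overline{J_1}$ already known, which also rules out the deeper candidates $\overline{s_1s_2s_1[3,1]\cdots[k,1]}$, or shows they are not needed. The statement explicitly leaves $\pb_{w_0s_1s_2s_1}$ undetermined in type $C$, $p=2$, so only its non-expansion is asserted. In type $B$ the analogous coefficients should vanish.

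\textbf{Main obstacle.} The hardest part is this last analysis: controlling the candidates $\overline{s_1s_2s_1[3,1]\cdots[k,1]}$ of Lemma~\ref{lem:possiblemyx_submin}, where star operations are unavailable at the exceptional edge. I would first try Remark~\ref{rmk:myx_zero} with $s=s_n$, peeling off simple reflections far from the exceptional edge to reduce to a rank-two computation; if that fails, I would fall back on an explicit light-leaves defect count.
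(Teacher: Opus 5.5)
\textbf{Where your plan matches the paper.} Several parts of your plan follow the paper directly:
\begin{enumerate}
\item the descent-set reduction via Lemma~\ref{lem:possiblemyx_submin};
\item the star operations bringing $(\overline{[a,b]},\overline{[a,1,b]})$ down to $(\overline{s_2},\overline{s_2s_1s_2})$;
\item Lemma~\ref{lem:myx_parabolic}, applied with $w_0=[n,1,n]\,w_0^{I}$ and $I=\{s_1,\dots,s_{n-1}\}$, which reduces the computation to rank two, where \cite{JW1} applies.
\end{enumerate}
You do not need any intersection form in rank $n$, and the same parabolic reduction also settles $\pmk_{\overline{s_1s_2s_1},\overline{s_1}}$. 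For $x=\overline{s_1}$, your use of Remark~\ref{rmk:myx_zero} is the paper's argument, but it works only with $s=s_2$. Then $s_2x=\overline{[2,1]}\in\Jtop$ has $\pb=b$, so every $b_z$ in $b_{s_2}b_{s_2x}$ satisfies $z\geq_L s_2x$, hence $z\in\Jtop\cup\{w_0\}$, which removes the deeper candidates. Your fallback $s=s_n$ fails, because $s_nx=\overline{s_ns_1}\notin\Jtop$ and its $\pb$ is unknown.

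\textbf{The gap: $x=\overline{s_1s_2s_1}=w_0s_1s_2s_1$.} The theorem requires $\pb_x=b_x$ here in type $B_n$ for every $p$, and in type $C_n$ for $p\neq 2$; you only say the coefficients ``should vanish''. The candidates are exactly $y=\overline{s_1s_2s_1[3,1]\cdots[k,1]}$ for $3\le k\le n$, and none of your tools reaches them:
\begin{enumerate}
\item Remark~\ref{rmk:myx_zero} is unavailable. No simple reflection $s\neq s_1$ makes $s\,s_1s_2s_1$ or $s_1s_2s_1\,s$ have a unique reduced expression, so $sx$ and $xs$ always lie outside $\Jtop$, where $\pb$ is unknown.
\item Parabolic reduction only handles $k\le n-1$. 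For $k=n$ the element $y$ is not in $[n,1,n]W_I$.
\item A light-leaves computation in arbitrary rank is not carried out.
\end{enumerate}
These coefficients are genuinely nonzero in type $C_3$ for $p=2$ (Remark~\ref{rem:w0121_Cn}). So the vanishing in type $B$ is not a local phenomenon you could detect by peeling off reflections.

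\textbf{The missing idea is a cell argument.}
\begin{enumerate}
\item We have $x\sim_R x'=w_0s_1s_2$ and $\pb_{x'}=b_{x'}$, so choose $w$ with $\mu^x_{x',w}\neq 0$.
\item By positivity, some $\pb_v$ with $\pmk_{x,v}\neq 0$ is a summand of $b_{x'}\pb_w$. Every $b_z$ occurring in $b_{x'}\pb_w$ satisfies $z\geq_R x'$.
\item Hence $v\sim_R x$. Also $R(v)=R(x)$ and $v\neq w_0$, which force $v\in\{w_0s_1,x\}$.
\item The already established vanishing $\pmk_{x,w_0s_1}=0$ (type $B$, or $p\neq 2$) forces $v=x$.
\item Therefore every $y$ with $\pmk_{y,x}\neq0$ satisfies $y\geq_R x'$. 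So $y\in\Jtop$, and then $y=x$.
\end{enumerate}
This is also precisely the step that breaks in type $C_n$ with $p=2$, which is why that case is left open.
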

\begin{proof}Let $x\in \Jtop$ and $y\in W$ such that $\pmk_{y,x} \neq 0$.

	Assume that $W$ is of type $A_n$ or $D_n$. By Lemma \ref{lem:myx_descentset}, if $y\in \Jtop$ then $y$ and $x$ must belong to the same right and left $0$-cells. Thus, in types $A_n$ and $D_n$, Lemma \ref{lem:possiblemyx_submin} and the fact that $\Jtop$ is strongly regular imply that $\pb_x=b_x$ for each $x\in J$.

	Assume now that $W$ is of type $B_n$ or $C_n$.

	\textbf{Case 1.} Assume that $s_1 \in L(x)$. By Lemma \ref{lem:myx_descentset}, we must have $s_1 \in L(y)$. By Lemma \ref{lem:possiblemyx_submin}, we have $y \in \Jtop$, whence $y$ and $x$ must belong to the same right and left $0$-cells. Thus, if $x = w_0[a,1,b]$ for some $2 \leq a,b \leq n$, we have $\pb_x =b_x.$
	Assume now that there exist $a,b \geq 2$ such that $x = w_0[a,b]$ and $y = w_0[a,1,b]$. We show inductively on $|a-b|$ that $\pmk_{y,x} = 0$ in type $C_n$ or if $p \neq 2$ and $\pmk_{y,x} =1$ in type $B_n$ for $p=2$.

	\textbf{Base case.} Suppose first that $a=b$. We proceed by induction on $a$ and on $n$. If $a=b=2$ and $n=2$, then $x = s_1s_2s_1$ and $y=s_1$ and this has been computed in \cite[Section 5.4]{JW1}. If $n \geq 3$, then $x \in [n,1,n]W_{\{s_1,\dots,s_{n-1}\}}$ and $y \in [n,1,n]W_{\{s_1,\dots,s_{n-1}\}}$. Thanks to Lemma \ref{lem:myx_parabolic} and the induction hypothesis on $n$, we conclude that $\pb_x=b_x$ in type $C_n$ and $\pb_x = b_x+b_y$ in type $B_n$.

	Suppose now that $a=b>2$ and consider $(\starop{a-1}{a}{x})^{*_{a,a-1}} = w_0s_{a-1}$. By the induction hypothesis and Lemma \ref{lem:myx_starop}, $\pmk_{y,x} = 0$ in type $C_n$ or if $p \neq 2$ and $\pmk_{y,x} =1$ in type $B_n$ for $p=2$.

	\textbf{Induction step.} 	Assume now that $|a-b|\geq 1$. If $a>b$, then we have $\starop{a}{a-1}{x} = \starop{a}{a-1}{[a,b]}w_0=[a-1,b]w_0$ and $\starop{a}{a-1}{y}= [a-1,1,b]w_0$ by Lemma \ref{lem:starop_w0}. The induction hypothesis and Lemma \ref{lem:myx_starop} imply the result. If $a<b$, then $\starop{a}{a+1}{x} = \starop{a}{a+1}{[a,b]}w_0=[a+1,b]w_0$ and $\starop{a}{a+1}{y}= [a+1,1,b]w_0$. We conclude thanks to the induction hypothesis and Lemma \ref{lem:myx_starop}.

	\textbf{Case 2.} We assume that $s_1 \not \in L(x)$ and $s_1 \in R(x)$. By Lemma \ref{lem:myx_descentset}, we must have $s_1 \in R(y)$. Lemma \ref{lem:possiblemyx_submin} shows $y \in \Jtop$. Hence $y$ and $x$ must belong to the same right and left $0$-cells and $y=x$.

	\textbf{Case 3.} Assume that $x = w_0s_1$ and $y \in \Jtop$. We show inductively on the rank of $W$ that if $W$ is of type $B_n$ or if $p \neq 2$, then $\pmk_{y,x}=0$ and if $W$ is of type $C_n$ and $p=2$, then $\pmk_{y,x}=1$. If $n=2$, then $x = s_2s_1s_2$ and $y=s_2$ and this has been computed in \cite[Section 5.4]{JW1}. If $n \geq 3$, then $x,y \in [n,1,n]W_{\{s_1,\dots,s_{n-1}\}}$. By Lemma \ref{lem:myx_parabolic} and the induction hypothesis, we conclude $\pmk_{y,x} =0 $ in type $B_n$ or if $p \neq 2$ and $\pmk_{y,x} =1$ in type $C_n$ and $p=2$.

	\textbf{Case 4.} Assume that $x = w_0s_1$ and $y = w_0v$ where $v$ is of the form $s_1s_2s_1[3,1]\dots[k,1]$ for $2\leq k \leq n$. If $\pmk_{y,x} \neq 0$, then $b_y$ appears as a summand in $\pb_{xs_2}b_{s_2}$ by Remark \ref{rmk:myx_zero}. Now $xs_2 \in \Jtop$ with $R(xs_2) = S\backslash \{s_2\}$, thus $\pb_{xs_2} =b_{xs_2}$ by \textbf{Case 2}. In particular, it means that if $\pmk_{y,x} \neq 0$, then $y \geq_R xs_2$, whence $y \in \Jtop$. Thus $y = w_0s_1s_2s_1$ as in \textbf{Case 3}.

	\textbf{Case 5.} Assume that $x = w_0s_1s_2s_1$ and $W$ is of type $B_n$. Let $x'= w_0s_1s_2$. Observe that $x$ and $x'$ lie in the same right cell. Thus, there is $w \in W$ such that $\mu_{x',w}^x \neq 0$. Hence, there is $v \in W$ such that $\pmu_{x',w}^v \neq 0$ and $\pmk_{x,v} \neq 0$. We then consider the decomposition of $\pb_{x'}\pb_{w}$ into $b_z$ for $z \in W$. Since $\pb_{x'}=b_{x'}$ by \textbf{Case 2}, we must have $z \geq_{R} x'$, whence, either $z = w_0$ or $z \sim_R x'$. In particular, $v \sim_R x' \sim_R x$ ($v \neq w_0$ since $\pb_{w_0} = b_{w_0}$). Since $R(v) \subseteq R(x)$ as $\pmk_{x,v} \neq 0$, we must have $v \sim_L x$ as well, so $v \in \{w_0s_1,x\}$. By \textbf{Case 3},  we cannot have $v=w_0s_1$ and $\pmk_{x,v}\neq 0$, thus we conclude that $v=x$. Moreover, if $\pmk_{y,x} \neq 0$, then $y \sim_R x$. Besides, $R(y)=R(x)$ and $y\leq x$ in the Bruhat order, thus $y=x$ and $\pb_x = b_x$.
\end{proof}

\begin{remark}\label{rem:w0121_Cn}
	We have not described $\pb_{w_0s_1s_2s_1}$ in type $C_n$ for $p=2$ and $n \geq 3$. If $n=3$, we have \[\pb_{w_0s_1s_2s_1} = b_{w_0s_1s_2s_1}+ (v+v^{-1})b_{w_0u},\]
	where $u = s_1s_2s_1[3,1]$ and $w_0u=s_2s_3s_2$ (\cite[Section 5.4]{JW1}). Thus, for any $n >3$, for $y = w_0u\in [n,1,n]\dots[4,1,4]
		W_{\{s_1,s_2,s_3\}}$, we have $\pmk_{y,x}= v+v^{-1}$ by Lemma \ref{lem:myx_parabolic}.

	More generally, we conjecture that $\pmk_{w_0u,x} \neq 0$ for each $u$ of the form $s_1s_2s_1[3,1]\dots[k,1]$ for $3\leq k \leq n$. This is satisfied for $n=4$. 
\end{remark}

\subsection{The $p$-cells coming from the submaximal cell}

We are now ready to describe how the elements $\pb_x$ for $x \in \Jtop$ distribute over two-sided $p$-cells. We first choose a specific element $x$ such that $\pb_x = b_x$ and show that its two-sided $p$-cell is contained in $\Jtop$ and is submaximal (Lemma \ref{lem:tech2_submin}).

\begin{lemma}\label{lem:mxy_xclosew0}
	Let $x \in \Jtop$ such that $l(x)= l(w_0)-1$. Then  $\pmk_{x,w} = 0$ for all $w \neq x$.
\end{lemma}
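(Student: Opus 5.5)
The statement says that if $x\in\Jtop$ has length $l(w_0)-1$, then $b_x$ occurs in no $\pb_w$ with $w\neq x$. The idea is that the only element strictly above $x$ in the Bruhat order is $w_0$, and $\pb_{w_0}$ has no lower terms.

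\textbf{Step 1.} By Lemma \ref{lem:myx_descentset}, $\pmk_{x,w}\neq 0$ forces $x\leq w$. So we only need to treat $w$ with $x\leq w$.

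\textbf{Step 2.} Since $l(x)=l(w_0)-1$, we have $x=w_0s$ for some $s\in S$. Every element of length $l(w_0)$ equals $w_0$, and $w_0$ is the unique element of maximal length. Hence $x\leq w$ with $w\neq x$ forces $l(w)>l(x)=l(w_0)-1$, so $w=w_0$.

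\textbf{Step 3.} It remains to show $\pmk_{x,w_0}=0$. By Remark \ref{rmk:pbw0}, $\pmk_{y,w_0}\neq0$ implies $L(y)=R(y)=S$, so $y=w_0$. Since $L(x)=S\setminus\{w_0sw_0\}\neq S$ by Lemma \ref{lem:w0DescentSet}, we get $x\neq w_0$ and $\pmk_{x,w_0}=0$.

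This closes the argument: $\pmk_{x,w}=0$ for all $w\neq x$. No step is a real obstacle; the only care needed is in Step 2, where $x\leq w$ must imply $l(x)\leq l(w)$ with equality only for $w=x$. This holds because the Bruhat order is graded by length.
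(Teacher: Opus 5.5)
Your proposal is correct and follows the paper's proof essentially step for step: Bruhat comparability from Lemma~\ref{lem:myx_descentset}, the length argument forcing $w=w_0$, and Remark~\ref{rmk:pbw0} to rule out $\pmk_{x,w_0}\neq 0$. The detour through Lemma~\ref{lem:w0DescentSet} in Step~3 is harmless but unnecessary, since $x\neq w_0$ already follows from $l(x)=l(w_0)-1$.
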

\begin{proof}
	If $\pmk_{x,w} \neq 0$, we must have $x < w$ in the Bruhat order or $x = w$. In the first case, $l(w)> l(w_0)-1$ and so $w = w_0$ which contradicts Remark \ref{rmk:pbw0}. Thus $w=x$.
\end{proof}

\begin{lemma}\label{lem:biggerthanunchanged}
	Let $x \in W$ such that $\pb_x =b_x$. Let $y \in W$. For $X \in \{L,R,J\}$, if $y \geq_X^p x$, then for all $z \in W$ such that $\pmk_{z,y} \neq 0$, $z\geq_X x$. In particular, $y \geq_X x$.
\end{lemma}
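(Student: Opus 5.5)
We argue directly from the definitions, mirroring the proof of Lemma \ref{lem:J1unionpcells}. The key input is the formula \eqref{eq:pmuxyz} together with positivity: all the coefficients $\pmk$, $\pmu$ and $\mu$ lie in $\bbZ_{\geq 0}[v,v^{-1}]$. Positivity means no cancellation can occur, so a nonzero term on the left-hand side forces a nonzero term on the right-hand side, and conversely.

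\textbf{Case $X=L$.} Suppose $y \geq_L^p x$, so there is $w\in W$ with $\pmu_{w,x}^y\neq 0$. Fix $z$ with $\pmk_{z,y}\neq 0$. The right-hand side of \eqref{eq:pmuxyz}, computed for the coefficient of $b_z$ in $\pb_w\pb_x$, contains the term $\pmk_{z,y}\pmu_{w,x}^y$. This term is nonzero; when $z=y$ it is just the summand $\pmu_{w,x}^y$. By positivity the right-hand side is nonzero, hence so is the left-hand side $\sum_{v,u}\pmk_{v,w}\pmk_{u,x}\mu_{v,u}^z$. Because $\pb_x=b_x$, we have $\pmk_{u,x}=\delta_{u,x}$, and the left-hand side collapses to $\sum_v \pmk_{v,w}\mu_{v,x}^z$. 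Some term $\mu_{v,x}^z$ must therefore be nonzero. This says $b_z$ occurs in $b_vb_x$, that is, $z\geq_L x$ in the ordinary order.

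\textbf{Case $X=R$.} This is identical, with $\pb_x\pb_w$ in place of $\pb_w\pb_x$.

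\textbf{Case $X=J$.} Here $\pb_y$ is a summand of $\pb_w\pb_x\pb_{w'}$ for some $w,w'$. Expanding in the $b$-basis, $\pb_w\pb_x\pb_{w'}=\sum \pmk_{v,w}\pmk_{v',w'}\, b_vb_xb_{v'}$. By positivity, the $b_z$-coefficient of $\pb_w\pb_x\pb_{w'}$ is at least the $b_z$-coefficient of $\pb_y$ times the multiplicity of $\pb_y$, and this is nonzero. Hence $b_z$ occurs in some $b_vb_xb_{v'}$, which gives $z\geq_J x$. One could instead chain through an intermediate element using the $L$ and $R$ cases, but that intermediate need not satisfy $\pb=b$. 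The direct three-factor expansion avoids this problem, so we use it.

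\textbf{Final claim.} Since $\pmk_{y,y}=1$, taking $z=y$ gives $y\geq_X x$.

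The only delicate point is keeping track of positivity and the index conventions of \eqref{eq:pmuxyz}, so that nonvanishing transfers correctly between the two sides. Everything else is bookkeeping.
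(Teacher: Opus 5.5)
Your proof is correct. For $X\in\{L,R\}$ it is exactly the paper's argument: the term $\pmk_{z,y}\pmu_{w,x}^y\neq 0$ on the right of \eqref{eq:pmuxyz} forces some $\pmk_{w',w}\pmk_{x',x}\mu^z_{w',x'}\neq 0$, and $\pb_x=b_x$ pins $x'=x$.

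For $X=J$ you expand the triple product $\pb_w\pb_x\pb_{w'}$ directly. The paper instead factors $y\geq_L^p y_1\geq_R^p x$ and chains. Both routes work, but your stated reason for avoiding the chain is not right: the paper never needs $\pb_{y_1}=b_{y_1}$.

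From $\pmu_{w,y_1}^y\neq 0$ and $\pmk_{z,y}\neq 0$, formula \eqref{eq:pmuxyz} alone, with no hypothesis on $y_1$, produces some $y_1'$ with $\pmk_{y_1',y_1}\neq 0$ and $z\geq_L y_1'$. The ``for all $z$'' form of the $R$ case, applied to $y_1\geq_R^p x$, then gives $y_1'\geq_R x$; that step uses only $\pb_x=b_x$. This is exactly why the lemma is stated for every $z$ with $\pmk_{z,y}\neq 0$ and not just for $y$ itself.

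Your three-factor expansion avoids this bookkeeping and is slightly shorter. The paper's route has the advantage of reducing the two-sided case to the one-sided ones already proved.
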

\begin{proof}
	We consider the case $X=L$, the case $X=R$ being similar. Since $y \geq_L^p x$, there is $w\in W$ such that $\pmu_{w,x}^y \neq 0$. Let $z \in W$ such that $\pmk_{z,y} \neq 0$. Then, $\pmk_{z,y}\pmu_{w,x}^y \neq 0$ and by Equation \ref{eq:pmuxyz}, there is $w',x' \in W$ such that $\pmk_{w',w}\pmk_{x',x}\mu^{z}_{w',x'}\neq 0$. Since $\pb_x=b_x$, we must have $x'=x$, whence $z \geq_L x$.

	Suppose that $y \geq_J^p x$, then there is $y_1 \in W$ such that $y \geq_L^p y_1$ and $y_1 \geq_R^p x$. Let $z \in W$ such that $\pmk_{z,y} \neq 0$. Since $y \geq_L^p y_1$, there is $w \in W$ such that $\pmu_{w,y_1}^y \neq 0$. Thus, there exist $w',y_1' \in W$ such that $\pmk_{w',w}\pmk_{y_1',y_1}\mu^{z}_{w',y_1'}\neq 0$. Hence, $z \geq_L y_1'$ and, by the previous case, $y_1' \geq_R x$. Thus,  $z \geq_J x$.
\end{proof}

The following lemma describes a particular $x \in \Jtop$ such that for any $y \in \Jtop$, we can find $\tilde{y} \in \Jtop$ such that $\tilde{y} \sim_J^p x$ and $b_y$ is a coefficient of $\pb_{\tilde{y}}$.
\begin{lemma}\label{lem:tech1_submin}
	Let $x\in W$ such that $\pb_x =b_x$ and $\pmk_{x,w} = 0$ for all $w \neq x$. Let $y \in W$ such that $y \sim_J x$. Then there is $\tilde{y} \in W$ such that
	\begin{enumerate}
		\item $\pmk_{y,\tilde{y}} \neq 0$,
		\item $\tilde{y} \geq_J x$,
		\item and $\tilde{y} \sim_J^p x$.
	\end{enumerate}
	If $x \in \Jtop$, then $\tilde{y} \sim_J x$.
\end{lemma}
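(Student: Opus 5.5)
The plan is a positivity argument that moves between the $b$-basis and the $p$-basis. It uses two facts: $b_s=\pb_s$ for $s\in S$, and all of $\mu$, $\pmu$ and $\pmk$ lie in $\bbZ_{\geq 0}[v,v^{-1}]$. The latter holds for $\mu$ and $\pmu$ because they are graded multiplicities of indecomposable summands in the Hecke category, and for $\pmk$ by \cite[Theorem 4.2]{JW1}. Hence no cancellation can occur when a coefficient is computed via \eqref{eq:pmuxyz}. I will also use that the ordinary preorder $\leq_J$ is generated by multiplication with the $b_s$. Concretely, $y\geq_J x$ means there are $s_1,\dots,s_k,t_1,\dots,t_m\in S$ such that $b_y$ occurs with nonzero coefficient in
\[
P:=b_{s_1}\cdots b_{s_k}\,b_x\,b_{t_1}\cdots b_{t_m}.
\]

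\textbf{Step 1: constructing $\tilde y$.} Since $\pb_x=b_x$ and $b_s=\pb_s$, we also have $P=\pb_{s_1}\cdots\pb_{s_k}\,\pb_x\,\pb_{t_1}\cdots\pb_{t_m}$. Expanding iteratively in the $p$-basis gives $P=\sum_w c_w\pb_w$ with $c_w\in\bbZ_{\geq0}[v,v^{-1}]$. Substituting $\pb_w=\sum_z\pmk_{z,w}b_z$, the coefficient of $b_y$ in $P$ is $\sum_w c_w\pmk_{y,w}$. This is nonzero and is a sum of nonnegative terms, so some $w=:\tilde y$ has $c_{\tilde y}\neq0$ and $\pmk_{y,\tilde y}\neq0$, which gives (1).

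Because $\pb_{\tilde y}$ is a summand of a product of $p$-canonical elements with $\pb_x$ in the middle, we get $\tilde y\geq_J^p x$. Applying Lemma~\ref{lem:biggerthanunchanged} with $z=\tilde y$ (using $\pmk_{\tilde y,\tilde y}=1$) then yields $\tilde y\geq_J x$, which is (2).

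\textbf{Step 2: the reverse inequality $x\geq_J^p\tilde y$.} This is the key step and the only place where the hypothesis $\pmk_{x,w}=0$ for $w\neq x$ enters. Since $y\sim_J x$, we also have $x\geq_J y$, so $b_x$ occurs in $Q:=b_{s'_1}\cdots b_{s'_r}\,b_y\,b_{t'_1}\cdots b_{t'_q}$ for suitable simple reflections. Consider instead $Q':=\pb_{s'_1}\cdots\pb_{s'_r}\,\pb_{\tilde y}\,\pb_{t'_1}\cdots\pb_{t'_q}$.

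Expanding $\pb_{\tilde y}=b_y\pmk_{y,\tilde y}+(\text{other nonnegative terms})$ shows that the coefficient of $b_x$ in $Q'$ is at least $\pmk_{y,\tilde y}$ times that in $Q$, hence nonzero, again by positivity. On the other hand, writing $Q'=\sum_w d_w\pb_w$ with $d_w\geq0$, the coefficient of $b_x$ is $\sum_w d_w\pmk_{x,w}=d_x$ by hypothesis. Thus $d_x\neq0$, so $\pb_x$ is a summand of $Q'$ and $x\geq_J^p\tilde y$. Combined with Step 1 this gives (3).

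\textbf{Step 3: the case $x\in\Jtop$.} Now $\tilde y\geq_J x$ and $\Jtop$ is submaximal, so either $\tilde y\sim_J x$ or $\tilde y=w_0$. In the latter case Remark~\ref{rmk:pbw0} gives $\pb_{w_0}=b_{w_0}$, so $\pmk_{y,\tilde y}\neq0$ forces $y=w_0$. This contradicts $y\sim_J x\in\Jtop$. Hence $\tilde y\sim_J x$.

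I expect the main care to be needed in the positivity bookkeeping of Step 2, where one must make sure that passing from $b_y$ to $\pb_{\tilde y}$ inside the product cannot cancel the $b_x$-coefficient.
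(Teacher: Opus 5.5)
Your proposal is correct and follows the paper's proof step for step. You produce $\tilde y$ by positivity from a product witnessing $y\geq_J x$, obtain $\tilde y\geq_J x$ from Lemma~\ref{lem:biggerthanunchanged}, use $\pmk_{x,w}=0$ for $w\neq x$ to get $x\geq_J^p\tilde y$, and exclude $\tilde y=w_0$ via Remark~\ref{rmk:pbw0}. The only difference is cosmetic: you write the products with simple reflections and use $b_s=\pb_s$, which makes explicit the positivity step the paper leaves implicit when it passes from $b_zb_xb_{z'}$ to $\pb_z\pb_x\pb_{z'}$.
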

\begin{proof}
	Since $x \leq_J y$, $b_y$ is a summand of $\pb_z\pb_x\pb_{z'}$ for some $z,z'\in W$. In particular, there is $\tilde{y} \in W$ such that $\pmk_{y,\tilde{y}} \neq 0$ and $\tilde{y} \geq_J^p x$. By Lemma \ref{lem:biggerthanunchanged}, we have $\tilde{y} \geq_J x$. On the other hand, since $y \leq_J x$, $b_x$ is a summand of $b_zb_{y}b_{z'}$ for some $z,z'\in W$, whence $b_x$ is a summand of $\pb_z\pb_{\tilde{y}}\pb_{z'}$. By assumption, we must have $x \geq_J^p \tilde{y}$.

	Lastly, since $y \neq w_0$, we must have $\tilde{y} \sim_J x$ if $x \in \Jtop$.
\end{proof}

\begin{lemma}\label{lem:tech2_submin}
	Let $x\in \Jtop$ such that $\pb_x =b_x$. Let $\pJ$ be the two-sided $p$-cell containing $x$. Then $\pJ \subseteq \Jtop$. If, moreover, $\pmk_{x,z} = 0$ for all $z \neq x$, then for any $w \in W$, if $w \not \leq_J^p  \pJ$, then $w = w_0$.
\end{lemma}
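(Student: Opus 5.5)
The plan is to prove the two claims separately, each by reducing a $p$-cell relation to an ordinary cell relation through Lemma \ref{lem:biggerthanunchanged}. For the first claim we use that $\pb_x=b_x$ together with the fact that $\Jtop$ is the unique ordinary two-sided cell directly below $\{w_0\}$.

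\textbf{Step 1: $\pJ\subseteq\Jtop$.} Let $y\in\pJ$, so $y\geq_J^p x$ and $x\geq_J^p y$.
\begin{itemize}
\item From $y\geq_J^p x$ and Lemma \ref{lem:biggerthanunchanged}, we get $y\geq_J x$. Since $x\in\Jtop$, either $y\in\Jtop$ or $y=w_0$.
\item It remains to exclude $y=w_0$. Note that $\pb_{w_0}=b_{w_0}$: by Remark \ref{rmk:pbw0}, nothing below $w_0$ appears in its expansion.
\item If $w_0\sim_J^p x$, then $x\geq_J^p w_0$. Apply Lemma \ref{lem:biggerthanunchanged} with $w_0$ in the role of the unchanged element. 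This gives $x\geq_J w_0$, and hence $x\sim_J w_0$, contradicting $x\in\Jtop$.
\end{itemize}
Hence $y\neq w_0$ and $\pJ\subseteq\Jtop$.

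\textbf{Step 2: everything except $w_0$ lies $\leq_J^p\pJ$.} Now assume in addition that $\pmk_{x,z}=0$ for all $z\neq x$, and let $w\in W$ with $w\neq w_0$. We want $w\leq_J^p x$, meaning $x$ is reachable from $w$ in the order, i.e.\ $\pb_x$ is a summand of $\pb_u\pb_w\pb_{u'}$ for some $u,u'$.
\begin{itemize}
\item In the ordinary cell order, $\Jtop$ is the unique submaximal cell. So $w\leq_J x$, and $b_x$ appears in $b_ub_wb_{u'}$ for some $u,u'\in W$.
\item Rewrite $b_ub_wb_{u'}$ in the $p$-canonical basis. Each $b_z$ is a $\bbZ$-combination of $p$-canonical elements. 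Coefficient comparison is easier going the other way: expand $\pb_u\pb_w\pb_{u'}$ in the $b$-basis.
\item Since $\pb_u=b_u+\dots$ with nonnegative coefficients (positivity from \cite[Theorem 4.2]{JW1}), and ordinary structure constants are nonnegative, the coefficient of $b_x$ in $\pb_u\pb_w\pb_{u'}$ is at least that in $b_ub_wb_{u'}$, which is nonzero. No cancellation can occur.
\item Expand $\pb_u\pb_w\pb_{u'}=\sum_v c_v\pb_v$ with $c_v\geq 0$. The coefficient of $b_x$ is $\sum_v c_v\,\pmk_{x,v}$. By hypothesis $\pmk_{x,v}=0$ unless $v=x$, so $c_x\neq0$.
\item Therefore $\pb_x$ is a summand of $\pb_u\pb_w\pb_{u'}$, i.e.\ $w\leq_J^p x$, and so $w\leq_J^p\pJ$.
\end{itemize}
By contraposition, $w\not\leq_J^p\pJ$ forces $w=w_0$.

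\textbf{Main obstacle.} The delicate point is the positivity bookkeeping in Step 2. We must make sure that passing from $b_ub_wb_{u'}$ to $\pb_u\pb_w\pb_{u'}$ only adds nonnegative terms, which holds since $\pmk\geq 0$. We must also make sure that the hypothesis $\pmk_{x,z}=0$ is exactly what identifies the coefficient $c_x$; this is the same mechanism as in Lemma \ref{lem:tech1_submin}. A smaller point is the fact used in Step 2 that every $w\neq w_0$ satisfies $w\leq_J\Jtop$. This is the standard statement that $\Jtop$ is the unique cell covered by $\{w_0\}$ in classical type. If needed, it can be obtained from $J_1$ being the unique cell covering $\{\mathrm{id}\}$ via multiplication by $w_0$, which reverses the two-sided order.
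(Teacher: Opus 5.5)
Your proof is correct and follows essentially the same route as the paper. For the first claim you apply Lemma~\ref{lem:biggerthanunchanged} to $x$, and then to $w_0$ (using $\pb_{w_0}=b_{w_0}$) to exclude $w_0$ from $\pJ$. For the second you lift the ordinary relation $w\leq_J x$ to $w\leq_J^p x$ via positivity and the hypothesis $\pmk_{x,z}=0$ for $z\neq x$. You simply make explicit the positivity bookkeeping and the fact $\pb_{w_0}=b_{w_0}$, which the paper leaves implicit.
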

\begin{proof}
	Let $y \geq_J^p x$ and $y \neq w_0$. Lemma \ref{lem:biggerthanunchanged} implies $y \geq_J \Jtop$. In particular, if $y \in \pJ$, then $y \in \Jtop$. If $y =w_0$, then $x \not \geq_J y$ and thus Lemma \ref{lem:biggerthanunchanged} shows $x \not \geq_J^p y$ and $y \not \in \pJ$. Thus, $\pJ \subseteq \Jtop$.

	Finally, let $w \in W$. By definition of $\Jtop$, we have $w \leq_J x$ or $w=w_0$. If $w \neq w_0$, then $b_x$ is a summand of $b_zb_{w}b_{z'}$ for some $z,z'\in W$, whence $b_x$ is a summand of $\pb_z\pb_{w}\pb_{z'}$. By assumption on $x$, we must have $x \geq_J^p w$. Thus, if $w \not \leq_J^p x$, then $w=w_0$.
\end{proof}

We will now describe the two-sided $p$-cells containing the elements of $\Jtop$. Again, the submaximal cell in type $A_n$ (and in fact any $0$-cell) was already treated in \cite[Theorem 4.33]{J1}. We include our proof for convenience of the reader. We first choose an element $x \in \Jtop$ as in Lemma \ref{lem:tech2_submin} and describe its two-sided $p$-cell $J$. In type $B_n$ (if $p=2$), we then choose another element $x'$ with $\pb_{x'} = b_{x'}$ and consider its two-sided cell $J'$. We show that those two two-sided cells are disjoint using a generalisation of the star operation and that $J \sqcup J' = \Jtop$.

\begin{theorem}[The submaximal $p$-cells]\label{thm:submax-dec-pcells}
	If $p\neq 2$ or if $W$ is of type $A_n$ or $D_n$, the submaximal $0$-cell $\Jtop$ is a two-sided $p$-cell. It contains the longest element of a parabolic subgroup.

	Assume that $p =2$.
	\begin{enumerate}
		\item In type $C_n$, the set $\Jtop \backslash \{w_0s_1s_2s_1\}$ forms a single two-sided $p$-cell. It contains the longest element of a parabolic subgroup.
		\item In type $B_n$, the submaximal two-sided $0$-cell \(\Jtop=w_0J_1\) is the union of
		      two two-sided \(p\)-cells.  Order the left and right cells by
		      \[
			      (s_1\mid s_2,\ldots,s_n),
		      \]
		      Then one \(p\)-cell is strongly regular of size
		      \((n-1)\times(n-1)\) containing the elements of the form $w_0[a,1,b]$ for $2 \leq a,b \leq n$. It contains the longest element of a parabolic subgroup. The other two-sided $p$-cell has shape
		      \[
			      \begin{array}{c|c}
				      2_{1,1}   & 1_{1,n-1}   \\ \hline
				      1_{n-1,1} & 1_{n-1,n-1}
			      \end{array}.
		      \] It does not contain the longest element of a parabolic subgroup.
	\end{enumerate}

\end{theorem}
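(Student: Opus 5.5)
We rely on the lemmas just proved: Lemma~\ref{lem:tech2_submin}, Lemma~\ref{lem:tech1_submin} and Lemma~\ref{lem:biggerthanunchanged}. They are combined with the explicit list of $\pb_x$, $x\in\Jtop$, from Theorem~\ref{lem:pbasis_submin}. First fix an \emph{anchor} $x_0=w_0s_2\in\Jtop$. In every type and characteristic, $\pb_{x_0}=b_{x_0}$ by Theorem~\ref{lem:pbasis_submin}, since $s_2\neq s_1$ and $w_0s_2$ is not of the form $w_0[i,j]$ with $i,j\geq2$ distinct from the unchanged ones; if needed, replace $s_2$ by some $s_i$ with $i\geq 3$. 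Since $l(x_0)=l(w_0)-1$, Lemma~\ref{lem:mxy_xclosew0} gives $\pmk_{x_0,w}=0$ for $w\neq x_0$. Hence Lemma~\ref{lem:tech2_submin} applies: the two-sided $p$-cell $\pJ$ of $x_0$ lies in $\Jtop$, and every $w\neq w_0$ satisfies $w\leq_J^p \pJ$. Then for each $y\in\Jtop$, Lemma~\ref{lem:tech1_submin} produces $\tilde y\in\Jtop$ with $\pmk_{y,\tilde y}\neq0$ and $\tilde y\sim_J^p x_0$. Reading off Theorem~\ref{lem:pbasis_submin} and Lemma~\ref{lem:possiblemyx_submin}, the only possibilities with $\tilde y\neq y$ are the following:
\begin{itemize}
\item type $B_n$, $p=2$: $y=w_0[i,1,j]$ and $\tilde y=w_0[i,j]$;
\item type $C_n$, $p=2$: $y=w_0s_1s_2s_1$ and $\tilde y=w_0s_1$.
\end{itemize}

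\textbf{Unsplit cases.} If $p\neq 2$, or $W$ is of type $A_n$ or $D_n$, then always $\tilde y=y$. So every $y\in\Jtop$ lies in $\pJ$, and $\Jtop=\pJ$. For the parabolic longest element, take $I=S\setminus\{s_n\}$. Then $w_I^{-1}w_0$ is the minimal coset representative, which is $[n,1,n]$ in types $B,C$ and an element with a unique reduced expression in types $A,D$ (e.g.\ $[n,1]$ in type $A$). Thus $w_I\in w_0J_1=\Jtop$; in types $B,C$ this uses that $w_0$ is central.

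\textbf{Type $C_n$, $p=2$.} Every $y\neq w_0s_1s_2s_1$ has $\tilde y=y$, or $\tilde y=w_0s_1$, and $w_0s_1\sim_J^p x_0$. So $\Jtop\setminus\{w_0s_1s_2s_1\}\subseteq\pJ$. To exclude $w_0s_1s_2s_1$, apply Lemma~\ref{lem:biggerthanunchanged}. If $w_0s_1s_2s_1\geq_J^p x_0$, then every $z$ with $\pmk_{z,w_0s_1s_2s_1}\neq0$ satisfies $z\geq_J x_0$. However, Remark~\ref{rem:w0121_Cn}, via Lemma~\ref{lem:myx_parabolic} from the $C_3$ computation, gives $z=w_0u$ with $u=s_1s_2s_1[3,1]$. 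This $z$ is neither $w_0$ nor in $\Jtop$, so $z<_J x_0$, a contradiction. The parabolic longest element $w_0[n,1,n]$ lies in $\pJ$.

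\textbf{Type $B_n$, $p=2$ (the main obstacle).} The anchor argument shows that $\pJ$ contains all $w_0[a,b]$, and with them every element not of the form $w_0[a,1,b]$. Take a second anchor $x_1=w_0[2,1,2]$, for which $\pb_{x_1}=b_{x_1}$. Conjugation by $w_0$ and the star operations (Lemmas~\ref{lem:starop_w0} and \ref{lem:starop_cell}) move $x_1$ to every $w_0[a,1,b]$, $2\leq a,b\leq n$. Lemma~\ref{lem:order_parabolic} applied to $W_{\{s_2,\dots,s_n\}}$ should show that these elements form a single cell $\pJ'$, exactly as in the proof of Theorem~\ref{prop:subregular-dec-pcells}. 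It contains $w_I=w_0[n,1,n]$.

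The hard step is showing $\pJ'\neq\pJ$. The first attempt is to reduce, by parabolic induction (Lemma~\ref{lem:myx_parabolic}, Lemma~\ref{lem:order_parabolic}) along $[n,1,n]\cdots[3,1,3]$ and by star operations, to the rank-two statement in $B_2$. There the element $w_0[2,1,2]=s_1$ forms its own $p$-cell by Lemma~\ref{lem:low-rank-input-subreg}. We then transport the non-relation $w_0[2,2]\not\leq_J^p w_0[2,1,2]$ upward. If the star operations do not suffice, we would use a generalised star operation: left multiplication by $w_0$ interchanging $\leq_L^p$-relations within $\Jtop$ and $J_1$, compared against the known splitting of $J_1$. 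Finally, counting the intersections of left and right $p$-cells gives the stated shapes. No element of the non-strongly-regular cell $\pJ$ is a parabolic longest element $w_I$, since each $w_I$ with $|I|=n-1$ either lies in $\pJ'$ or lies outside $\Jtop$.
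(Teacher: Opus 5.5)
Your unsplit cases and type $C_n$ follow the paper's proof. You use an anchor of length $l(w_0)-1$ with $\pb=b$, then Lemmas \ref{lem:tech2_submin} and \ref{lem:tech1_submin}, and you exclude $w_0s_1s_2s_1$ via Remark \ref{rem:w0121_Cn} and Lemma \ref{lem:biggerthanunchanged}.

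In type $B_n$ with $p=2$, however, your anchor fails. $w_0s_2=w_0[2,2]$, and likewise $w_0s_i=w_0[i,i]$ for $i\geq 3$, is exactly one of the changed elements: $\pb_{w_0[2,2]}=b_{w_0[2,2]}+b_{w_0[2,1,2]}$ by Theorem \ref{lem:pbasis_submin}. Your own list of pairs $\tilde y\neq y$ already shows this. So the hypothesis $\pb_{x_0}=b_{x_0}$ of Lemmas \ref{lem:biggerthanunchanged}, \ref{lem:tech1_submin} and \ref{lem:tech2_submin} does not hold. The anchor there must be $w_0s_1$, as in the paper; this is easy to repair.

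The genuine gap is the step $\pJ'\neq\pJ$, for which you give no argument. Lemma \ref{lem:order_parabolic} only transports relations: $x\leq_R^p y$ in $W_I$ gives $wx\leq_R^p wy$ for every $w\in W^I$. A non-relation in $W_I$ only fails for \emph{some} coset representative, possibly $w=\mathrm{id}$. It need not fail for your specific $v=[n,1,n]\cdots[3,1,3]$, where $v\,s_1s_2s_1=w_0s_2$ and $v\,s_1=w_0[2,1,2]$.

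Per-coset transfer of non-relations is false in general. Take type $A_2$, $I=\{s_1\}$, $v=s_2\in W^I$. Then $s_1\not\leq_R \mathrm{id}$ in $W_I$, yet $b_{s_2s_1}b_{s_2}=b_{s_2s_1s_2}+b_{s_2}$ gives $s_2s_1\leq_R s_2$ in $W$. Lemma \ref{lem:myx_parabolic} transports the coefficients $\pmk$, not cell relations. Star operations do preserve non-relations, but they only act along simply laced edges and cannot supply the missing parabolic step.

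Your fallback also fails. Left multiplication by $w_0$ does not reverse $p$-cell preorders in positive characteristic, because $\pb$ has no analogue of the Kazhdan--Lusztig inversion. Indeed, the theorem you are proving shows that the $p$-cells of $\Jtop$ are not $w_0$-translates of those of $J_1$: $\{s_1\}$ is a $p$-cell of $J_1$, while $w_0s_1$ lies in the large cell of $\Jtop$.

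What is missing is the paper's argument, which has two steps.
\begin{enumerate}
\item Reduce $x'=w_0[2,1,2]\in\pJ$ to the relation $x'\geq_R^p w_0s_2$, using descent sets, inversion and $\pJ\subseteq\Jtop$.
\item Show by induction on $l(w)$ that for every $y\sim_R w_0s_2$ not of the form $w_0[i,1,j]$, the coefficients of $b_{w_0[2,1,c]}$ and $b_{w_0[2,c]}$ in $\pb_y\pb_w$ coincide. Since $\pb_{w_0[2,c]}=b_{w_0[2,c]}+b_{w_0[2,1,c]}$, this forces $\pmu_{y,w}^{w_0[2,1,c]}=0$.
\end{enumerate}
The base case $w=s$ uses Jensen's generalised star operation for the non-simply-laced pair $\{s_1,s_2\}$ \cite[Definition 4.1, Corollary 4.10]{J1}. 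It exchanges $w_0[2,1,a]$ and $w_0[2,a]$ and fixes $w_0[2,1]$; it is not left multiplication by $w_0$.
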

\begin{proof}
	{\bf Types $A_n$ and $D_n$.}
	Assume that $W$ is of type $A_n$ or $D_n$. Let $x = w_0s_1 \in \Jtop$. Theorem \ref{lem:pbasis_submin} shows $\pb_x =b_x$ and Lemma \ref{lem:mxy_xclosew0}  implies $\pmk_{x,w} = 0$ for all $w \neq x$. Let $\pJ$ be the two-sided $p$-cell containing $x$. Lemma \ref{lem:tech2_submin} yields $\pJ \subseteq \Jtop$.
	We are left to show that $\Jtop \subseteq \pJ$. Let $y \in \Jtop$. By Lemma \ref{lem:tech1_submin}, there is $\tilde{y} \in \Jtop$ such that $\pmk_{y,\tilde{y}} \neq 0$ and $\tilde{y} \in \pJ$. By Theorem \ref{lem:pbasis_submin}, we must have $\tilde{y}=y$, and thus $y \in \pJ$.

		{\bf Type $C_n$.}
	Assume now that $W$ is of type $C_n$. Let $x = w_0s_2 \in \Jtop$. Theorem \ref{lem:pbasis_submin} guarantees $\pb_x =b_x$ and Lemma \ref{lem:mxy_xclosew0} shows $\pmk_{x,w} = 0$ for all $w \neq x$. Let $\pJ$ be the two-sided $p$-cell containing $x$. Lemma \ref{lem:tech2_submin} implies $\pJ \subseteq \Jtop$.

	Let $y \in \Jtop$ such that $y \not \in \{w_0s_1s_2s_1\}$. By Theorem \ref{lem:pbasis_submin},  if $\tilde{y} \in \Jtop$ such that $\pmk_{y,\tilde{y}} \neq 0$, then $\tilde{y}=y$. Thus by Lemma \ref{lem:tech1_submin}, we must have $y \in \pJ$.

	If $y=w_0s_1s_2s_1$ and $p\neq 2$, the same argument applies and $y \in \pJ$.
	If $y=w_0s_1s_2s_1$ and $p=2$, then by Remark \ref{rem:w0121_Cn} there is $z \in W$ with $\pmk_{z,y} \neq 0$ and $z <_J x$. Thus, Lemma \ref{lem:biggerthanunchanged} shows $y \not \geq_J^p x$ and $y \not \in \pJ.$

	{\bf Type $B_n$.}
	Finally, assume that $W$ is of type $B_n$. Let $x = w_0s_1 \in \Jtop$. Theorem \ref{lem:pbasis_submin} yields $\pb_x =b_x$ and Lemma \ref{lem:mxy_xclosew0} shows $\pmk_{x,w} = 0$ for all $w \neq x$. Let $\pJ$ be the two-sided $p$-cell containing $x$. Lemma \ref{lem:tech2_submin} implies $\pJ \subseteq \Jtop$. If $p\neq 2$, then for any $y \in \Jtop$, there is no $\tilde{y} \neq y \in \Jtop$ such that $\pmk_{y,\tilde{y}} \neq 0$, by Theorem \ref{lem:pbasis_submin}. Thus, Lemma \ref{lem:tech1_submin} guarantees $y \in \pJ$.

	From now on, let $p=2$. Let $y \in \Jtop$ such that $y \neq w_0[i,1,j]$ for some $i,j \geq 2$. By Theorem \ref{lem:pbasis_submin}, there is no $\tilde{y} \neq y \in \Jtop$ such that $\pmk_{y,\tilde{y}} \neq 0$. Thus,  Lemma \ref{lem:tech1_submin} yields $y \in \pJ$.

	Now let $x' = w_0[2,1,2] \in \Jtop$. Let $\pJ'$ be the two-sided $p$-cell containing $x'$. Applying Lemma \ref{lem:starop_cell} repeatedly, we deduce $x'\sim_L^p w_0[a,1,2]$ for any $2 \leq a \leq n$. Moreover, $w_0[2,1,b] \sim_R^p w_0[a,1,b]$ for any $2 \leq a,b \leq n$. Thus, $\pJ'$ contains all the elements of $\Jtop$ of the form $w_0[a,1,b]$ for any $2 \leq a,b \leq n$.
	On the other hand, Lemma \ref{lem:tech2_submin} guarantees $\pJ'\subseteq \Jtop$.

	Lastly, we check that $\pJ' \neq \pJ$. It suffices to show that $x'\not \in \pJ$. Observe that $b_{w_0s_2}$ is a summand of $b_{x'}b_{s_2s_1}$. Thus, $\pb_{w_0s_2}$ is a summand of $\pb_{x'}\pb_{s_2s_1}$ and $x' \leq_R^p w_0s_2$. If $x' \in \pJ$, then there is $y \in W$ such that $x'\geq_L^p y \geq_R^p w_0s_2$. Thus, $L(w_0s_2)\subseteq L(y)$ and $R(y) \subseteq R(x')$. By Lemma \ref{lem:tech2_submin}, we must have $y\in \pJ$, hence $y \in \Jtop$. Thus, either $y=x'$ or $y=w_0s_2$. If $y =w_0s_2$, then $x'=(x')^{-1} \geq_R^p y^{-1}=y$. Thus, if $x' \in \pJ$, then $x'\geq_R^p w_0s_2$. So we need to show that $x' \not \geq_R^p w_0s_2$.
	To do so, we will check that for all $y \sim_R w_0s_2$ not of the form $w_0[i,1,j]$ for some $i,j \geq 2$, for all $z \sim_R w_0s_2$ of the form $w_0[i,1,j]$ for some $i,j \geq 2$ and for all $w \in W$, we have $\pmu_{y,w}^{z} =0$.
	Equivalently, we verify that for each $y$ as before, $z = w_0[2,1,c]$ and $w \in W$, the coefficient of $b_{z}$ and the coefficient of $b_{w_0[2,c]}$ are equal in $\pb_y\pb_w$. We write $\pmu(y,w,z)$ for the coefficient of $b_{z}$ in $\pb_y\pb_w$. We proceed by induction on $l(w)$.

	\textbf{Base case} Let $s \in S$. Assume first that $s\in R(y)$. If $y = w_0[2,b]$ for some $2 \leq b \leq n$, then $b_yb_s = (v+v^{-1})b_y$ and $b_{w_0[2,1,b]}b_s = (v+v^{-1})b_{w_0[2,1,b]}$. If $y = w_0s_2s_1$, then $b_yb_s = (v+v^{-1})b_y$. In both cases, the hypothesis is verified.

	Assume that $s \not \in R(y)$, We use a variant of the star operation which applies to $s_1$ and $s_2$ as defined in \cite[Definition 4.1]{J1}. It exchanges $w_0[2,1,a]$ and $w_0[2,a]$ for any $2 \leq a \leq n$ and fixes $w_0[2,1]$. Moreover, thanks to \cite[Corollary 4.10]{J1}, for $w,v \in \{w_0[2,1,a],w_0[2,a]\}$, we have \[\mu_{w,s_a}^v = \mu_{w^*,s_a}^{v^*}. \]

	Let $z = w_0[2,1,c]$ for some $2 \leq c \leq n$. If $y = w_0[2,b]$ for some $2 \leq b \leq n$, then  \[\pmu(y,s_b,z)=\mu_{w_0[2,b],s_b}^{w_0[2,1,c]}+\mu_{w_0[2,1,b],s_b}^{w_0[2,1,c]} = \mu_{w_0[2,1,b],s_b}^{w_0[2,c]}+\mu_{w_0[2,b],s_b}^{w_0[2,c]} = \pmu(y,s_b,w_0[2,c]).\]

	Assume finally that $y=w_0s_{2}s_{1}$. Then,  \[\pmu(y,s_1,z) = \mu_{w_0[2,1],s_1}^{w_0[2,1,c]}= \mu_{w_0[2,1],s_1}^{w_0[2,c]}=\pmu(y,s_1,w_0[2,c]).\] Therefore the hypothesis is verified.

	\textbf{Induction step} Let $w \in W$ such that $l(w) \geq 2$. Let $s \in R(w)$ and $w_1 =ws$. Then $\pb_{w} = \pb_{w_1}b_s - \sum_{v<w_1}\pmu_{w_1,s}^v\pb_v$. Let $y\sim_R w_0s_2$ not of the form $w_0[i,1,j]$ for some $i,j \geq 2$. Let $z = w_0[2,1,c]$ for some $2 \leq c \leq n$. We then have
	\[\pmu(y,w,z) = \sum_{z' \in W}\pmu_{y,w_1}^{z'}\pmu(z',s,z) - \sum_{v <w_1}\pmu_{w_1,s}^v\pmu(y,v,z)\]
	By the induction hypothesis, if $\pmu_{y,w_1}^{z'} \neq 0$, then  $z'\sim_R w_0s_2$ and $z'$ is not of the form $w_0[i,1,j]$ for some $i,j \geq 2$. Thus applying the induction hypothesis to each $v$ and by the initial step, we obtain
	\begin{align*}
		\pmu(y,w,z) & = \sum_{z' \in W}\pmu_{y,w_1}^{z'}\pmu(z',s,w_0[2,c]) - \sum_{v <w_1}\pmu_{w_1,s}^v\pmu(y,v,w_0[2,c]) \\
		            & = \pmu(y,w,w_0[2,c]).
	\end{align*}

	This concludes the proof that $\pJ' \neq \pJ$ and thus the description of the decomposition of $\Jtop$.
\end{proof}

\section{Subregular and submaximal cells in the exceptional types}

This section is purely computational. The left cells in types $G_2, F_4$ and $E_6$ were computed by Gibson and given in \cite{EJ}.

\begin{remark}
	We only describe the subregular cells in exceptional type. For \(G_2\) and
	\(F_4\), we also include the submaximal cells; the whole cell structure in
	these two types can be created with the code in \cite{MRT}, which also produced the
	cells displayed here. The code in \cite{MRT} also computes the \(p\)KL basis elements, but we do not list them here, since there are
		too many.
\end{remark}

\begin{theorem}[Exceptional subregular and submaximal \(p\)-cells]
	\label{prop:exceptional-subregular-submaximal-pcells}
	For all primes not listed below, the subregular cell remains a single
	two-sided \(p\)-cell. In types \(G_2\) and \(F_4\), the same convention applies
	to the submaximal cell when it is displayed below. The submaximal cells in
	types \(E_6,E_7,E_8\) are not included here.

	\begin{enumerate}[label=\textup{(\alph*)}]

		\item 	In type \(G_2\), the subregular and submaximal cells coincide. The only
		      exceptional primes are \(p=2,3\).
		      \begin{enumerate}[label=\textup{(\roman*)}]
			      \item For \(p=2\), this cell is the union of two two-sided \(p\)-cells, of
			            shapes
			            \[
				            \begin{array}{c|c}
					            2_{1,1} & 1_{1,1} \\ \hline
					            1_{1,1} & 2_{1,1}
				            \end{array}
				            \qquad\text{and}\qquad
				            1_{2,2}.
			            \]

			      \item For \(p=3\), this cell is the union of two two-sided \(p\)-cells, of
			            shapes
			            \[
				            \begin{array}{c|c}
					            2_{1,1} & 2_{1,1} \\ \hline
					            2_{1,1} & 3_{1,1}
				            \end{array}
				            \qquad\text{and}\qquad
				            1_{1,1}.
			            \]
		      \end{enumerate}
		      For \(p=3\) both cells contain a longest element of a parabolic subgroup,
		      while for \(p=2\) only the cell of shape
		      \(1_{2,2}\) does.

		\item In type \(F_4\), the only exceptional prime for the ordinary subregular and
		      submaximal cells is \(p=2\).
		      Here, both the ordinary subregular and submaximal cells are
		      the union of two two-sided \(p\)-cells, of shapes
		      \[
			      \begin{array}{c|c}
				      1_{2,2} & 1_{2,2} \\ \hline
				      1_{2,2} & 2_{2,2}
			      \end{array}
			      \qquad\text{and}\qquad
			      1_{2,2}.
		      \]
		      In the subregular case, both contain a longest element of a parabolic subgroup,
		      while in the submaximal case neither cell contains such an element.

		\item In types \(E_6,E_7,E_8\), the ordinary subregular cell remains a single
		      two-sided \(p\)-cell in every characteristic. Its shape is
		      \[
			      E_6:\ 1_{6,6},
			      \qquad
			      E_7:\ 1_{7,7},
			      \qquad
			      E_8:\ 1_{8,8}.
		      \]
		      All of these contain longest elements of a parabolic subgroup.
	\end{enumerate}
\end{theorem}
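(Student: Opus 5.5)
The proof is computational, but I would organise the computation so that the machine only supplies finitely many local inputs and the rest is forced by the general lemmas of Sections 2--4. The first step is to determine the relevant \(p\)-canonical basis elements. In the subregular case every element of \(J_1\) has a unique reduced expression. Simply laced pieces can be handled exactly as in Lemma~\ref{lem:pbx_shortestpath}: star operations along simply laced edges together with inversion (Lemma~\ref{lem:myx_inverse}) reduce everything to rank two. Hence in types \(E_6,E_7,E_8\) every \(x\in J_1\) satisfies \(\pb_x=b_x\). In type \(F_4\) the simply laced arms \(\{s_1,s_2\}\) and \(\{s_3,s_4\}\) are handled the same way. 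The only genuinely new inputs are the elements whose path crosses the double edge \(s_2s_3\), and these reduce, by Lemma~\ref{lem:myx_parabolic} and the star operations, to the \(B_3/C_3\) parabolic computations already done in Theorem~\ref{lem:pbasis_subreg}. For \(G_2\) and for the submaximal cell of \(F_4\), I would compute the relevant \(\pb_x\) directly, using the local intersection forms of \S\ref{subsubsec:localintform} (via the code of \cite{MRT}). Lemma~\ref{lem:possiblemyx_submin}-type descent arguments cut down the candidate lower terms, and they also show that the only primes where the intersection-form rank drops are \(2,3\) for \(G_2\) and \(2\) for \(F_4\).

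The second step is to confirm that the ordinary cell is a union of \(p\)-cells. For the subregular cell this is the argument of Lemma~\ref{lem:J1unionpcells}, which is type-free. For the submaximal cell I would pick \(x\in\Jtop\) of length \(l(w_0)-1\) with \(\pb_x=b_x\) and apply Lemmas~\ref{lem:mxy_xclosew0}, \ref{lem:tech1_submin} and \ref{lem:tech2_submin}.

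The third step determines the \(p\)-cell structure. In \(E_{6,7,8}\) all \(\pb_x=b_x\), so the \(\leq^p\) relations on \(J_1\) coincide with the ordinary ones. The cell is therefore a single two-sided \(p\)-cell whose shape \(1_{n,n}\) is read off Convention~\ref{not:subregular}, because the graph is simply laced and a tree. For \(F_4\) I would transport the \(B_2/C_2\) splitting of Lemma~\ref{lem:low-rank-input-subreg} using Lemma~\ref{lem:order_parabolic} and Lemma~\ref{lem:starop_cell} along the arms, as in the proof of Theorem~\ref{prop:subregular-dec-pcells}. Then I would count intersections of left and right \(p\)-cells to obtain the stated shapes; the submaximal case follows the pattern of Theorem~\ref{thm:submax-dec-pcells}. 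For \(G_2\) the cell has only ten elements or so, and I would list all products \(\pb_x\pb_s\) and \(\pb_s\pb_x\) for \(s\in S\), which suffices to generate \(\leq^p_L\) and \(\leq^p_R\). The parabolic longest elements are checked directly: \(s_i\), \(s_is_js_i\) for simply laced edges, and the rank-two \(w_0\)'s together with the \(F_4\) longest elements of rank-three parabolics in the submaximal case.

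The main obstacle is separating cells, that is, showing that two candidate \(p\)-cells are really distinct. This is the analogue of the argument \(x'\not\geq^p_R w_0s_2\) in Theorem~\ref{thm:submax-dec-pcells}, and it needs control over \(\pb_y\pb_w\) for all \(w\), not just simple reflections. My first attempt would be the induction on \(l(w)\) used there, with Jensen's generalised star operation across the double edge. If that fails, particularly for \(G_2\) at \(p=3\), I would fall back on the exhaustive machine computation of the full \(p\)-cell preorder from \cite{MRT}.
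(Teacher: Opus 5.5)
Your plan is largely sound, but it takes a different route from the paper. The paper's whole proof of this theorem is the machine computation of \cite{MRT}, which produces all relevant $p$KL elements and the full $p$-cell preorder. You derive as much as possible from the general lemmas and keep the computer only for $G_2$, the submaximal cell of $F_4$, and separation; for those pieces your plan coincides with the paper's.

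What your route buys is real. The proof of Lemma~\ref{lem:pbx_shortestpath} runs verbatim on any simply laced tree, with the neighbour of $s_a$ on the path playing the role of $s_{a-1}$. So $\pb_x=b_x$ on $J_1$ in $E_6,E_7,E_8$ for every $p$, without a computer. In $F_4$, every element of $J_1$ is either a path, handled by star operations plus inversion, or lies in one of the rank-three parabolics $\{s_1,s_2,s_3\}$, $\{s_2,s_3,s_4\}$, which are of types $B_3$ and $C_3$. Hence Theorem~\ref{lem:pbasis_subreg}, via Lemma~\ref{lem:myx_parabolic} with $w=\mathrm{id}$, gives all subregular $p$KL elements. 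The paper's computation, by contrast, treats all cases uniformly.

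Two points should be made explicit.
\begin{itemize}
\item Your step ``all $\pb_x=b_x$ on $J_1$, so the $\leq^p$ relations on $J_1$ coincide with the ordinary ones'' needs the unnamed lemma preceding Lemma~\ref{lem:J1unionpcells}: $b_v$ with $v\in J_1$ occurs only in $\pb_w$ with $w\in J_1$. This ensures $p$KL elements of higher cells cannot contribute $\pb_v$ to $b_s\pb_x$. The converse inclusion then follows from Lemma~\ref{lem:biggerthanunchanged}.
\item In the subregular case, the separation you call the main obstacle is only a finite hand check. The $p$-orders are generated by multiplication with the $b_s$, as you note for $G_2$. Since $J_1$ is a union of $p$-cells, a chain between two elements of one $p$-cell never leaves $J_1$. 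By the same lemma, the coefficient of $\pb_v$ ($v\in J_1$) in $\pb_x b_s$ is determined by the part of the expansion supported on $J_1$. So for $F_4$ the $24$ elements of $J_1$ and the explicit $B_3/C_3$ formulas suffice, with no induction over all $w$ and no fallback.
\end{itemize}

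There is a genuine, though repairable, gap in your second step for the submaximal cell. Applying Lemmas~\ref{lem:tech1_submin} and~\ref{lem:tech2_submin} to a single $x$ with $\pb_x=b_x$ shows only two things: the $p$-cell of $x$ lies in $\Jtop$, and every $y\in\Jtop$ outside it occurs as a lower term of some $\pb_{\tilde y}$ with $\tilde y$ in that $p$-cell. It does \emph{not} show that $\Jtop$ is a union of $p$-cells. For $F_4$ at $p=2$ the statement asserts exactly that, for two $p$-cells. You therefore also need an element $x'$ with $\pb_{x'}=b_{x'}$ in the second $p$-cell, to which you apply the first part of Lemma~\ref{lem:tech2_submin}, just as the paper uses $w_0[2,1,2]$ in type $B_n$. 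If every element of that cell had a nontrivial expansion, the lemmas would give nothing. This is precisely why the paper cannot place $w_0s_1s_2s_1$ in type $C_n$ (Remark~\ref{rem:w0121_Cn}). You must read this off the computed $p$KL elements or the full preorder.

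Some smaller corrections:
\begin{itemize}
\item Which primes matter is determined by the elementary divisors of the integral local intersection forms. Descent arguments only restrict which $b_y$ can occur; they cannot exclude a prime.
\item Your list of parabolic longest elements is off, though harmlessly. The elements $s_is_js_i$ and the rank-two longest elements never lie in $J_1$, since each has two reduced expressions.
\item In $G_2$ the only candidates are $s_1$ and $s_2$.
\item In $F_4$, every element of $\Jtop$ has length at least $19$, while proper parabolic longest elements have length at most $9$. So the submaximal claim is immediate.
\end{itemize}
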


\begin{proof}
	Use the code in \cite{MRT}.
\end{proof}

\end{document}